\newtheorem{theorem}{Theorem}[section]
\newtheorem{lemma}[theorem]{Lemma}
\newtheorem{proposition}[theorem]{Proposition}
\theoremstyle{definition}
\newtheorem{definition}[theorem]{Definition}
\theoremstyle{remark}
\newtheorem{remark}[theorem]{Remark}
\numberwithin{equation}{section}
\newcommand{\basis}{(\Omega, \mathcal{F} (\cdot),  \bP)}
\newcommand{\Mloc}{\mathsf{M_{loc}}}
\newcommand{\cMloc}{\mathsf{cM_{loc}}}
\newcommand{\cMloci}{\mathsf{cM}^I_{\mathsf{loc}} }
\newcommand{\expec}{\bE}
\newcommand{\Sem}{\mathsf{S}}
\newcommand{\class}{\mathsf{D}}
\newcommand{\pare}[1]{\left(#1\right)}	
\newcommand{\bra}[1]{\left[#1\right]}	
\newcommand{\set}[1]{\left\{#1\right\}}	
\newcommand{\abs}[1]{\left|#1\right|}	
\newcommand{\inner}[2]{\left \langle #1, #2 \right \rangle}	
\newcommand{\norm}[1]{\left \Vert #1 \right \Vert}	
\newcommand*{\dfn}{\mathrel{\rlap{%
			\raisebox{0.3ex}{$\cdot$}}%
		\raisebox{-0.3ex}{$\cdot$}}%
	=}
\DeclareMathOperator*{\esssup}{ess\,sup}
\newcommand{\cB}{\mathcal{B}}
\newcommand{\cE}{\mathcal{E}}
\newcommand{\cF}{\mathcal{F}}
\newcommand{\cK}{\mathcal{K}}
\newcommand{\cP}{\mathcal{P}}
\newcommand{\cX}{\mathcal{X}}
\newcommand{\cY}{\mathcal{Y}}
\newcommand{\bE}{\mathbb E}
\newcommand{\bL}{\mathbb L}
\newcommand{\bN}{\mathbb N}
\newcommand{\bP}{\mathbb P}
\newcommand{\bR}{\mathbb R}
\newcommand{\bX}{\mathbb X}
\newcommand{\bY}{\mathbb Y}
\newcommand{\bZ}{\mathbb Z}
\newcommand{\Leb}{\mathsf{Leb}}
\newcommand{\lzp}{\bL^0_+}
\newcommand{\one}{1}
\newcommand{\such}{\, | \,}
\newcommand{\ud}{\mathrm d}
\newcommand{\jii}{j \in \set{1, \ldots, n}}
\newcommand{\clo}{O}
\newcommand{\pclo}{\bP \otimes \clo}
\newcommand{\ppclo}{\pare{\pclo}}
\newcommand{\id}{\mathsf{id}}
\newcommand{\expecp}{\bE^\bP}
\newcommand{\co}{\mathsf{c}}
\newcommand{\dcX}{{}^{d} \kern-0.23em \cX}
\newcommand{\delcX}{{}^{\delta} \kern-0.23em \cX}
\newcommand{\epscX}{{}^{\epsilon} \kern-0.23em \cX}
\newcommand{\onecX}{{}^{1} \kern-0.23em \cX}
\newcommand{\dX}{{}^{d} \kern-0.23em X}
\newcommand{\delX}{{}^{\delta} \kern-0.23em X}
\newcommand{\epsX}{{}^{\epsilon} \kern-0.23em X}
\newcommand{\delZ}{{}^{\delta} \kern-0.23em Z}
\newcommand{\delpi}{{}^{\delta} \kern-0.17em \pi}
\newcommand{\delnu}{{}^{\delta} \kern-0.17em \nu}
\newcommand{\Fin}{\mathsf{Fin}}
\newcommand{\Cou}{\mathsf{Cou}}
\newcommand{\rkh}{\mathcal{R}}
\newcommand{\RKH}{\mathsf{R}}
\newcommand{\FV}{\mathsf{FV}}
\newcommand{\cFV}{\mathsf{cFV}}
\newcommand{\dnorm}[1]{\left \lceil \mkern-5mu \left \lceil #1 \right \rceil \mkern-5mu \right \rceil}
\newcommand{\cSem}{\mathsf{cS}}
\newcommand{\cXs}{\cX_{\mathsf{s}}}
\newcommand{\xs}{x_{\mathsf{s}}}
\begin{document}

\title[Infinite-dimensional stochastic integration and Mathematical Finance]{Stochastic integration with respect to arbitrary collections of continuous semimartingales and applications to Mathematical Finance}%
\author{Constantinos Kardaras}%
\address{Constantinos Kardaras, Department of Statistics, London School of Economics and Political Science,   10 Houghton Street, London, WC2A 2AE, UK}%
\email{k.kardaras@lse.ac.uk}%

\thanks{The author would like to thank Ioannis Karatzas for valuable comments and help regarding exposition.}%
\subjclass[2010]{60H05, 91G10}
\keywords{Infinite-dimensional stochastic integration; continuous semimartingales; mathematical finance; fundamental theorem}%

\date{\today}%
\begin{abstract}
Stochastic integrals are defined with respect to a collection $P = (P_i; \, i \in I)$ of continuous semimartingales, imposing no assumptions on the index set $I$ and the subspace of $\bR^I$ where $P$ takes values. The integrals are constructed though finite-dimensional approximation, identifying the appropriate local geometry that allows extension to infinite dimensions. For local martingale integrators, the resulting space $\Sem(P)$ of stochastic integrals has an operational characterisation via a corresponding set of integrands $\RKH(C)$, constructed with only reference the covariation structure $C$ of $P$. This bijection between $\RKH(C)$ and the (closed in the semimartingale topology) set $\Sem(P)$ extends to families of continuous semimartingale integrators for which the drift process of $P$ belongs to $\RKH(C)$. In the context of infinite-asset models in Mathematical Finance, the latter structural condition is equivalent to a certain natural form of market viability. The enriched class of wealth processes via extended stochastic integrals leads to exact analogues of optional decomposition and hedging duality as the finite-asset case. A corresponding characterisation of market completeness in this setting is provided. 
\end{abstract}

\maketitle


\section*{Introduction}

\subsection*{Discussion}

One of the reasons why the theory of stochastic integration with respect to a finite number of semimartingale integrators $P \equiv (P_i; \, i \in I)$ is comprehensive is that, up to Hilbert isomorphisms, finite-dimensional Euclidean spaces have a unique interesting geometric and topological structure: \cite[Theorem 5.21]{MR2378491}. Predictable integrands $h$ take values in the space of linear functionals of $\bR^I$, and infinitesimal increments $h \ud P$ of stochastic integrals are formally understood as actions of $h$ on $\ud P$. The choice of an inner product (and a basis) on $\bR^I$ only affects the representation (and interpretation) of integrands. Necessary and sufficient conditions---even with predictable characterisation, as in \cite{MR2126973}---exist to ensure that the stochastic integral of a predictable process with respect to $P$ is well defined, and the resulting vector space of all possible stochastic integrals with respect to $P$ is closed in a natural strong semimartingale topology, considered in \cite{MR544800}, and which we shall refer to as $\Sem$-topology. This closedness property is conceptually important, validating in essence that the program of defining stochastic integration has been carried out in a satisfactory way; it is also important in a practical sense: apart from its obvious value in Stochastic Analysis (for example, in the study of stable subspaces of local martingales), it has found applications in other areas of Applied Probability. One such area in Mathematical Finance, where the previous become crucial in cornerstone results of the theory; we shall further elaborate on this later on.

Given arbitrary collections of semimartingales $P \equiv (P_i; \, i \in I)$, restricting attention to the class of stochastic integrals using only a finite number of these integrators typically leads to failure of $\Sem$-closedness. This can be remedied, of course, by considering the closure in $\Sem$-topology of the aforementioned class; thus, one may define abstractly the set $\Sem(P)$ of ``extended stochastic integrals'' with respect to $P$. This approach results both in $\Sem(P)$ being $\Sem$-closed, and avoids complications when dealing with infinite-dimensional state spaces as the ones mentioned in the next paragraph. However, it comes with a considerable price: the abstractly-defined class $\Sem(P)$ has no operational, or structural, characterisation.

A workable construction of stochastic integral in infinite dimensional state spaces involves certain decisions. The vector space $\bR^I$ is deemed too large, and its product topology too weak, for interesting linear pairings of integrands with integrators to exist. Typically, one restricts $P$ to take values in a chosen separable Banach space $\bY$, and $h \ud P$ is again formally interpreted as the local action of a predictable process $h$, with values in linear functionals on $\bY$, on the semimartingale increment $\ud P$. A further decision concerns the subclass of linear functionals that integrands are allowed to take values in. Restricting attention to the class $\bY^*$ of continuous linear functionals may not result in $\Sem$-closedness, and some extension is necessary. For instance, when $P$ is a $\bY$-valued Wiener process  for some Hilbert space $\bY$ (see, for example, \cite[Section 4.1]{MR3236753} for definitions and properties), one has to consider integrands that take values in non-continuous (unbounded) linear functionals defined on a strict subspace $\bX$ of $\bY$; see \cite[Chapters 3--4]{MR2235463}, \cite[Chapter 4]{MR3236753}, as well as \cite[Chapter 5]{MR688144} and \cite{MR1655149}. In infinite-dimensional settings, it is often the case that almost no path of the process $P$ lies on $\bX$, already obscuring the interpretation of $h \ud P$ as $h$ acting on $\ud P$.

For an illustration of the above, let $I$ be countably infinite, and let $P \equiv (P_i; \, i \in I)$ be a collection of independent standard Brownian motions. With weights $(b_i; \, i \in I)$ such that $b_i > 0$, $i \in I$, and $\sum_{i \in I} b_i < \infty$, the fact that $\sum_{i \in I} b_i |P_i|^2$ is a finitely-valued process implies that $P$ takes values in the Hilbert space $\bY = \{ y \in \bR^I \such \sum_{i \in I} b_i |y_i|^2  < \infty \}$ equipped with inner product $\bY \times \bY \ni (y, z) \mapsto \inner{y}{z}_{\bY} \dfn \sum_{i \in I} b_i y_i z_i$. In order for $\int_0^\cdot \inner{\eta}{\ud P}_{\bY} \equiv \int_0^\cdot \sum_{i \in I} b_i \eta_i \ud P_i$ to make sense, it is sufficient that $\eta$ takes values in $\bZ \dfn \{ z \in \bR^I \such \sum_{i \in I} |b_i z_i|^2  < \infty \}$, a strict superset of $\bY^* \simeq \bY$. The Cauchy-Schwarz inequality $\sum_{i \in I} |b_i z_i y_i |\leq \sqrt{\sum_{i \in I} |b_i z_i|^2} \sqrt{\sum_{i \in I} |y_i|^2}$ implies that linear functionals with representation from $\bZ$ do not act on the whole space $\bY$, but rather on the subspace $\bX \dfn \{ y \in \bR^I \such \sum_{i \in I} |y_i|^2  < \infty \}$. In fact, the weights $(b_i; \, i \in I)$ are completely irrelevant: one may simply endow $\bX$ with a Hilbert structure via the inner product $\bX \times \bX \ni (y, z) \mapsto \inner{y}{z}_{\bX} = \sum_{i \in I} y_i z_i$, and interpret $\int_0^\cdot \eta \ud P \equiv \int_0^\cdot \inner{\eta}{\ud P}_{\bX} = \sum_{i \in I} \eta_i \ud P_i$. Note that $\bX$ is a strict subset of $\bY$, that the inner product $\inner{\cdot}{\cdot}_{\bX}$ endows $\bX$ with a strictly stronger topology than the one inherited from $\inner{\cdot}{\cdot}_{\bY}$, and that almost every path of $P$ lives outside of $\bX$, since $\sum_{i \in I} |P_i(t)|^2 = \infty$ holds for all $t > 0$. Importantly, and as has been mentioned already, the Hilbert space $(\bX, \inner{\cdot}{\cdot}_\bX)$ does not depend on the choice of $\bY$, i.e., on the chosen weights $(b_i; \, i \in I)$. There is no actual purpose of initially restricting $P$ to take values in $\bY$; one could carry out the above program without any reference to $\bY$, and construct $\bX$ intrinsically. Indeed, all that is required to ensure that $\sum_{i \in I} \eta_i \ud P_i$ is formally well defined is that the putative quadratic variation process $\int_0^\cdot \sum_{(i, j) \in I \times I} \eta_i \ud P_i \ud P_j \eta_j = \int_0^\cdot \norm{\eta(t)}^2_\bX \ud t$ is finite, for which \emph{only information on the local covariation structure of $P$ is necessary}.

\subsection*{Contribution}

This work aims at extending the points of the last paragraph above in the context of continuous semimartingales $P \equiv (P_i; \, i \in I)$. Stochastic integration is approached in an \emph{agnostic} way, imposing no assumptions regarding the structure of the index set $I$, and with no \emph{a priori} restrictions on the subspace of $\bR^I$ that $P$ may be taking values. For local martingale integrators $P$, we construct a topological bijection of the $\Sem$-closed space $\Sem(P)$ of ``extended stochastic integrals'' with an appropriate space $\RKH(C)$ of integrands. The latter is a dynamic version of reproducing kernel Hilbert space (rkHs) with respect to the stochastic aggregate kernel $C \equiv (C_{ij}; \, (i, j) \in I \times I)$, consisting of the processes $C_{ij} \dfn [P_i, P_j]$ of aggregate covariations between $P_i$ and $P_j$ for $(i, j) \in I \times I$. This bijection is then extended to semimartingale integrators with the structural property that the collection of finite variation drift processes of $P$ belongs in the space $\RKH(C)$.

In order to have a preview of how this program is carried out, let us revisit the case of a finite index set $I$ and a family $P$ of continuous local martingales. We identify the appropriate local\footnote{By ``local'' here and below we mean dependent on $(\omega, t)$ in the product space $\Omega \times \bR_+$ of scenarios in $\Omega$ and time in $\bR_+$, where stochastic processes are defined.} geometry of $\bR^I$, tailored for extension in infinite dimensional stochastic integration. As previously, and in order to keep things on an intuitive level, we work with formal differential quantities. The local covariation matrix $\ud C$ of $\ud P$, regarded as a kernel on $I \times I$, induces the local rkHs  $\rkh(\ud C) = \{ (\ud C) \eta \such \eta \in \bR^I \}$ (the image of $\ud C$) with inner product satisfying $\inner{\gamma}{\delta}_{\ud C} = \sum_{i \in I} \eta_i \delta_i$ whenever $\gamma \equiv (\ud C) \eta$ and $\delta$ are elements of $\rkh(\ud C)$. Given $X \equiv \int_0^\cdot \sum_{i \in I} h_i \ud P_i$, where $h$ is predictable and $P$-integrable, let $F = ([X, P_i]; \, i \in I)$ be the aggregate covariation processes of $X$ with respect to $P$. Then, $\ud F = (\ud C) h$; therefore, $\ud X = \sum_{i \in I} h_i \ud P_i = \inner{\ud F}{\ud P}_{\ud C}$. Furthermore, $\norm{\ud F}^2_{\ud C} = \sum_{i \in I} h_i \ud C_{i j} h_j = \ud [X, X]$ holds for the quadratic variation $[X, X]$ of $X$. Straightforward reverse engineering shows that we may characterise the class $\RKH(C)$ of integrands as collections $F \equiv (F_i; i \in I)$ of finite variation processes for which the putative quadratic variation process $\int_0^\cdot \norm{\ud F}^2_{\ud C}$ is  finitely valued. It is exactly for such $F \in \RKH(C)$ that the stochastic integral $X^F = \int_0^\cdot \inner{\ud F}{\ud P}_{\ud C}$ is well defined and satisfies the It\^{o} isometry $[X^F, X^F] = \int_0^\cdot \norm{\ud F}^2_{\ud C}$.

Carefully ironing out details, the above discussion extends when $I$ is an arbitrary index set. One starts with integrals of increments $\inner{\ud F}{\ud Z}_{\ud C}$ for aggregate covariation processes $F \equiv (F_i; \, i \in I)$ that involve integration only with finite subsets $J \subseteq I$ of integrators, with the remaining ``coordinates'' $(F_i; \, i \in I \setminus J)$ being completely specified. Then, via suitable natural approximation, the general stochastic integral is defined. More precisely:

\begin{enumerate}
	\item First, the space $\RKH(C)$ is constructed, using as only input a stochastic aggregate kernel $C$ as in Definition \ref{def:stoch_aggr_kernel}. In order to ensure that measurability issues are avoided, the construction is made ``from the ground up'', inspired by the way general rkHs can be defined via approximations from finite-dimensional ones, and not abstractly as completions of pre-Hilbert spaces. This is carried out in full detail in Section \ref{sec:stoch_rkhs}, with certain prerequisites on usual rkHs given in Appendix \ref{appsec:rkhs}.
	\item Secondly, in the case of continuous local martingales $P = (P_i ; \, i \in I)$, and with $C$ generated by $P$ cia $C_{ij} = [P_i, P_j]$, $(i, j) \in I \times I$, we establish a bijection (and a topological isomorphism) of the spaces $\RKH (C)$ and $\Sem(P)$ via the mapping $\Sem (P) \ni X \mapsto ([X, P_i]; i \in I) \in \RKH(C)$. This material constitutes the first half of Section \ref{sec:stoch_int}.
	\item Thirdly, we investigate the extent to which the mapping $\Sem (P) \ni X \mapsto ([X, P_i]; i \in I) \in \RKH(C)$ forms a bijection between $\RKH(C)$ and $\Sem(P)$ when $P$ is a collection of continuous semimartingales, with Doob-Meyer decompositions $P_i = A_i + M_i$, $i \in I$, where $A \equiv (A_i; \, i \in I)$ are continuous processes of finite variation, and $M \equiv (M_i; \, i \in I)$ are continuous local martingales. The main insight is that the \emph{structural condition} $A \in \RKH(C)$ is both necessary and sufficient for $\Sem (P) \ni X \mapsto ([X, P_i]; i \in I) \in \RKH(C)$ to be a bijection; and a complete operational characterisation of $\Sem(P)$ is possible. This is done in the second half of Section \ref{sec:stoch_int}, culminating with Theorem \ref{thm:struct_rkhs}.
\end{enumerate}

The above approach has pedagogical benefits: it does not require\footnote{It should be noted, however, that the theory of Banach-valued stochastic processes is both elegant and powerful, offering a more in-depth understanding of stochastic analysis, even though the present approach regarding stochastic integration does not strictly \emph{require} this knowledge.} prior knowledge of infinite-dimensional stochastic analysis, and constructs stochastic integrals via natural approximation using the well-understood finite-dimensional integration theory. The local geometry used on $\bR^I$ is the closest relative to the one in finite-dimensional Euclidean space: rkHs are endowed with an inner product structure leading to a topology where evaluation functionals $\bR^I \ni x \mapsto x_i \in \bR^I$ are continuous for every $i \in I$, and intuition gathered from finite-dimensional Euclidean spaces typically carries through without pitfalls. 

\subsection*{Applications to Mathematical Finance}

Models with an infinite number of assets have been considered extensively in the field of Mathematical Finance, often dealing with questions of (absence of) arbitrage, completeness, and optimisation. In the context of so called \emph{large financial markets}, there is work at the pre-limit in \cite{MR1348197, MR1806101} to study absence of arbitrage, as well in the post-limit in \cite{MR2178505}, where hedging and utility maximisation in models with countable infinity of assets is discussed, and the $\Sem$-topology plays a prominent role. The theoretical modelling of fixed-income markets involves a continuum of zero-coupon bonds, indexed by their maturities. In \cite{HJM:92}, the martingale property of discounted bond prices was characterised though a condition that explicitly connects the drift and covariance structure of forward rates. In \cite{MR2976683}, a version of trading in bond markets was proposed, using measure-valued integrands in order to accommodate for the continuum of maturities; even so, the resulting class of integrals may not be $\Sem$-closed, and concepts such as \emph{approximate} completeness are used to circumvent the fact.  Despite these efforts, there has not been a unifying treatment of models with arbitrary number of assets that is as satisfactory as the theory in the finite-asset case; a notable exception is \cite{MR3583448}, containing a more abstract treatment of markets with an infinity of assets, where the importance of the $\Sem$-topology is re-enforced, but without a concrete operational  characterisation of the class of wealth processes.

Using the present construction of stochastic integrals, the cornerstone results of the theory of Mathematical Finance carry \emph{mutatis mutandis}. To begin with, the structural condition that the collection $A$ of finite variation drift processes of $P$ belongs in the space $\RKH(C)$, that allows one to characterise stochastic integrals in terms of integrators $\RKH(C)$, is the exact necessary and sufficient condition to ensure (a version of) market viability. This viability condition has had several incarnations in previous literature as \emph{no arbitrage of the first kind} in \cite{MR1348197}, \emph{condition ``BK''} in \cite{MR1647282}, \emph{No Unbounded Profit with Bounded Risk} in \cite{MR2335830}. It is weaker than the condition of \emph{No Free Lunch with Vanishing Risk} in \cite{MR1304434}; it is is fact the weakest notion such that, together with the $\Sem$-closedness of the class $\Sem(P)$ of stochastic integrals, allows other fundamental results such as the optional decomposition theorem and hedging duality, to be proved. These results, in turn, allow to apply abstract results of \cite{MR1722287, MR2023886} and \cite{MR3292127} to solve utility maximisation problems.

We demonstrate in Section \ref{sec:math_fin} how to carry out this program and prove the \emph{fundamental} Theorem \ref{thm:ftap}, connecting market viability, existence of local martingale deflators and the structural condition $A \in \RKH(C)$, the \emph{optional decomposition} Theorem \ref{thm:odt} and its consequence, the \emph{hedging duality} Theorem \ref{thm:hedging}, as well as the \emph{second fundamental} Theorem \ref{thm:ftap2} involving completeness. In \S \ref{subsec:HJM}, we give an example of how the theory is applied by specialising to the context of Heath-Jarrow-Morton bond markets.

We only consider here continuous-path asset prices, as the theory of infinite-asset markets becomes more delicate when jumps may appear. Indeed, an illuminating example in \cite[Section 6]{MR3583448} shows, even the strong condition of \emph{No Free Lunch with Vanishing Risk} can only ensure existence of supermartingale (but not necessarily local martingale) deflators in the market. Contrary to the finite-asset case as in \cite{MR1304434}, \cite{MR3177411} and \cite{MR3551861}, one cannot expect an analogue of the fundamental Theorem \ref{thm:ftap} to hold.

\subsection*{Notation} 

Time will be evolving continuously in $\bR_+ \equiv [0, \infty)$. All stochastic elements will be defined on a filtered probability space $\basis$, where $\cF (\cdot) = \pare{\cF (t); t \in \bR_+}$ is a right-continuous filtration and $\bP$ a probability on $(\Omega, \cF)$, where $\cF \equiv \bigvee_{t \in \bR_+} \cF(t)$. Unless otherwise explicitly mentioned, all relationships between random variables are understood to hold in the $\bP$-a.e. sense, and all relationships between stochastic processes are understood to hold outside a $\bP$-evanescent set.

We denote by $\FV$ the set of  all adapted and right-continuous scalar processes $B$ of finite first variation on compact time intervals, with $B(0) = 0$. Furthermore, $\Mloc$ will denote the set of all local martingales on $\basis$. The set $\Sem$ consists of all semimartingales on $\basis$, that is, processes that can be decomposed as sums of elements from $\FV$ and $\Mloc$. The qualifier ``$\co$'' in front of the previous sets (as in $\cFV$, $\cMloc$ and $\cSem$) denotes the corresponding subset that consists of processes with continuous paths.

For arbitrary nonempty index set $I$, we write $\Fin (I)$ (respectively, $\Cou (I)$) for the collection of all non-empty subsets of $I$ with finite (respectively, at most countably infinite) cardinality. Whenever $\class$ is a given set of processes, $\class^I$ will denote the collection of processes of the form $D \equiv (D_i; \, i \in I)$ with $D_i \in \class$ for all $i \in I$. We  stress that elements of $\class^I$ are regarded simply as collections of scalar processes from $\class$, and \emph{not} as $\bR^I$-valued processes. This point of view sheds away potential measurability issues that would result from aggregating uncountably many processes into a single one, without assuming any structure on the index set $I$.

\section{Stochastic Aggregate Reproducing Kernel Hilbert Space} \label{sec:stoch_rkhs}

This following notion is central to the whole Section.

\begin{definition}
\label{def:stoch_aggr_kernel}
A collection $C \equiv \pare{C_{i j}; \, (i, j) \in I \times I} \in \cFV^{I \times I}$ of adapted, continuous processes of finite variation will be called an \textbf{stochastic aggregate kernel on $I \times I$}, if, for each fixed pair  $(i, j) \in I \times I$, $C_{i j} = C_{j i}$ holds, as well as
	\begin{equation} \label{eq:diff_covar_pos_def}
	\sum_{(i, j) \in J \times J} z_i \pare{C_{ij} (t) - C_{ij} (s)} z_j \geq 0, \quad \text{for } 0 \leq s \leq t, \ J \in \Fin(I), \text{ and } (z_i; \, i \in J) \in \bR^J.
	\end{equation}
\end{definition}

The properties of such a stochastic aggregate kernel  $C$ can be formally described via  the requirement that the ``differential'' process $\ud C$ takes values in the collection of kernels on $I \times I$,   defined at the start of Appendix \ref{appsec:rkhs}.  However, a certain technical issue arises already,   from the possibility that the index set $I$ might be uncountable. For every \emph{fixed} pair $(i, j) \in I \times I$, the processes $C_{i j}$ and $C_{j i}$ have continuous paths  of finite variation, and the process-equality  $C_{i j} = C_{j i}$ holds outside   an evanescent set  \emph{which may depend on $(i, j) \in I \times I$}. We do \emph{not} insist that this process-equality should  hold simultaneously for all $(i, j) \in I \times I$; while such equality is  possible  for (an at most) countable $I$, it is too much to ask for, and unnecessary for our purposes when $I$ is uncountable. The same goes for positive-definiteness: for fixed   $J \in \Fin(I)$, one may alter the processes $\pare{C_{ij};  \, (i, j) \in J \times J}$ on an evanescent set, and obtain  \eqref{eq:diff_covar_pos_def} simultaneously for all $(z_i; i \in J) \in \bR^J$ and $0 \leq s \leq t$; but it would be impossible in general to have these inequalities valid simultaneously for all finite subsets $J \in \Fin(I)$.

The canonical examples of stochastic aggregate kernels to keep in mind throughout, are those    generated by a collection $P \equiv (P_i; i \in I)$ of continuous semimartingales, via 
\begin{equation} 
\label{C:semimart}
C_{i j} \dfn \bra{P_i, P_j}, \quad (i, j) \in I \times I.
\end{equation}

\subsection{Stochastic aggregate rkHs: the finite-index set case} \label{subsec:stoch_rkhs_fin}

For the purposes of \S\ref{subsec:stoch_rkhs_fin}, we assume that the set $I$ has finite cardinality. We follow similar notational conventions as in Section \ref{appsec:rkhs} of the Appendix, and set
\[
C_{I j} \dfn (C_{ij}; \, i \in I) \in \cFV^I, \quad j \in I.
\]

We define the \textbf{stochastic aggregate rkHs} $\RKH(C)$ associated with a given stochastic aggregate kernel $C$ as in Definition \ref{def:stoch_aggr_kernel}, as the collection of all processes $F \equiv  (F_i; i \in I) \in \cFV^I$ of the form
\begin{equation} 
\label{eq:rkhs_implicit_fin}
F = \int_0^\cdot \sum_{j \in I} \theta_j (t) \ud C_{I j} (t), \quad \text{i.e.,} \quad F_i = \int_0^\cdot \sum_{j \in I} \theta_j (t) \ud C_{i j} (t), \ \ i \in I,
\end{equation}
 for a predictable process $\theta \equiv (\theta_i; \, i \in I)$   satisfying  the  
integrability condition 
\begin{equation} 
\label{eq:rkhs_stoch_norm_fin}
\int_0^T \norm{\ud F (t)}^2_{\ud C(t)} \dfn \int_0^T \sum_{(i, j) \in I \times I} \theta_i(t) \ud C_{i j} (t) \theta_j(t) < \infty, \  \ \forall \, \,T \in \bR_+ .
\end{equation}
 This condition \eqref{eq:rkhs_stoch_norm_fin} implies, in particular,   that $F$ in \eqref{eq:rkhs_implicit_fin} is well defined; because,  for all $T \in \bR_+$ and  $i \in I$, the Cauchy-Schwarz inequality gives
\[
\int_0^T \abs{ \sum_{j \in I} \theta_j (t) \ud C_{i j} (t) } \leq \sqrt{C_{ii} (T) \int_0^T \norm{\ud F (t)}^2_{\ud C(t)} } < \infty.
\]

In order to appreciate the definition of $\int_0^\cdot \norm{\ud F (t)}^2_{\ud C(t)}$ in \eqref{eq:rkhs_stoch_norm_fin}, let us  note that \eqref{eq:rkhs_implicit_fin}  reads formally $\ud F = \sum_{j \in I} \theta_j \ud C_{I j}.$  In view of the notation in   \S \ref{subsec:rkh_fin}, this  may be re-written formally as $\ud F \in \rkh (\ud C)$ and lead, formally once again, to
\[
\norm{\ud F}^2_{\ud C} = \sum_{i \in I} \theta_i \ud F_i = \sum_{(i, j) \in I \times I} \theta_i \ud C_{i j} \theta_j,
\]
the differential version of the notation in \eqref{eq:rkhs_stoch_norm_fin}.

\begin{remark}[A  description in terms of rates]
\label{rem:oper_clock_fin}
The equations \eqref{eq:rkhs_implicit_fin}, \eqref{eq:rkhs_stoch_norm_fin} can be  written more rigorously  in terms of kernel \emph{rates}, putting the above formal considerations   on solid ground. We shall explain    what this entails presently.
	
Define the continuous nondecreasing $\clo \dfn \sum_{i \in I} C_{ii}$. (Since $I$ is here assumed to have  finite cardinality, $\clo$ is finitely-valued.) Then, there exists predictable $c : \Omega \times \bR_+ \to \bR^{I \times I}$ such that
\[
C_{ij} = \int_0^\cdot c_{ij} (t) \ud \clo(t), \quad \forall \, (i, j) \in I \times I.
\]
Note that $c(\omega, t)$ is a positive-definite kernel on $I$ on a predictable set of full $\ppclo$-measure; setting $c \equiv 0$ on the complement of the previous predictable set, we may, and shall, assume that $c(\omega, t)$ is a positive-definite kernel on $I$ for \emph{every} $(\omega, t) \in \Omega \times \bR_+$.

With the above notation, the integrability condition of \eqref{eq:rkhs_stoch_norm_fin} reads
\[
\int_0^T \pare{\sum_{(i, j) \in I \times I} \theta_i(t)  c_{i j} (t) \theta_j(t) } \ud \clo (t) < \infty, \quad \forall \, \,\,T \in \bR_+.
\]
Furthermore, with $c_{I j} = (c_{ij}; \, i \in I)$ for $j \in I$, and defining the predictable $\bR^I$-valued process $f \dfn \sum_{j \in I} \theta_j c_{I j}$ by analogy with \eqref{eq:repr_ker_bil_form},  we    write concisely the process  considered in \eqref{eq:rkhs_implicit_fin}   as $F = \int_0^\cdot f (t) \ud \clo (t)$, and note  $\norm{f}^2_c = \sum_{(i, j) \in I \times I} \theta_i c_{i j} \theta_j$. Formally once again, we express this equality  as $\norm{\ud F}^2_{\ud C} = \sum_{(i, j) \in I \times I} \theta_i \ud C_{i j} \theta_j = \norm{f}^2_c \ud \clo$. In view of all this,  the process $\int_0^\cdot \norm{\ud F (t)}^2_{\ud C(t)} \in \cFV$ of \eqref{eq:rkhs_stoch_norm_fin} becomes 
\[
\int_0^\cdot \norm{\ud F (t)}^2_{\ud C(t)} \equiv \int_0^\cdot \sum_{(i, j) \in I \times I} \theta_i(t) \ud C_{i j} (t) \theta_j(t) = \int_0^\cdot \norm{f(t)}^2_{c (t)} \ud \clo (t).
\]

The above notation is more rigorous, 	but also quite a bit   more involved, than the compact  and suggestive  one in  \eqref{eq:rkhs_implicit_fin},   \eqref{eq:rkhs_stoch_norm_fin}; we shall stick with that simpler notation for the remainder of this Section. Let us also note that, when $I$ is potentially (uncountably) infinite, such a universal dominating process $\clo$ may not even exist; we shall instead  use then ideas from Lemma \ref{lem:rkhs_norm_charact} of the Appendix, in order to define the stochastic aggregate rkHs  $\RKH(C)$   in \eqref{eq:FV2C}.
\end{remark}

With $F \in \RKH(C)$ as  in \eqref{eq:rkhs_implicit_fin}  and $H=   (H_i; i \in I)   \in \RKH(C)$ with $H_i = \int_0^\cdot \sum_{j \in I} \eta_j (t) \ud C_{i j} (t)$, $i \in I$, we also introduce the process
\begin{align} 
\label{eq:polarisation}
\int_0^\cdot \inner{\ud F(t)}{\ud H(t)}_{\ud C(t)} \dfn \int_0^\cdot \sum_{(i, j) \in I \times I} \theta_i(t) \ud C_{i j} (t) \eta_j(t),
\end{align}
and note that $\int_0^\cdot \norm{\ud F(t)}^2_{\ud C(t)} = \int_0^\cdot \inner{\ud F(t)}{\ud F(t)}_{\ud C(t)}$ for $F \in \RKH(C)$ in the manner of \eqref{eq:rkhs_stoch_norm_fin}.

By definition, $\int_0^\cdot \norm{\ud C_{I j} (t)}^2_{\ud C(t)} = C_{jj}$, so that $C_{I j} \in \RKH(C)$ holds for each $j \in I$; furthermore, it is straightforward to verify the identity
\begin{equation} \label{eq:reproducing_stochastic}
F_j = \int_0^\cdot \inner{\ud C_{I j}(t)}{\ud F(t)}_{\ud C(t)}, \quad F \in \RKH(C), \ j \in I.
\end{equation}
This is the stochastic aggregate version of the reproducing kernel property in the Appendix.

\subsection{An alternative representation for the finite-index case}
\label{subsec:stoch_rkhs_fin_alt}
 
We continue assuming that $I$ is a nonempty index set of finite cardinality.

Just as in Remark \ref{rem:rkhs_norm_charact} of  \S \ref{subsec:rkh_alt}, here also there is  an alternative representation for  the  stochastic aggregate rkHs  $\RKH(C)$ of processes in \eqref{eq:rkhs_implicit_fin}, \eqref{eq:rkhs_stoch_norm_fin}. To wit, we shall associate with \emph{every given} $F \equiv (F_i; i \in I) \in \cFV^I$ a nondecreasing process  $  \int_0^\cdot \norm{\ud F(t)}^2_{\ud C(t)}    $;   then $\RKH(C)$  is   the collection of all such processes $F \in \cFV^I$, for which   $\int_0^\cdot \norm{\ud F(t)}^2_{\ud C(t)}$ is finitely-valued.

Formally, this is done as follows: We define by analogy with \eqref{eq:rkhs_pre_image_fin_dim} the predictable processes 
\[
\theta^{F; n} \dfn \pare{\ud C +   \frac{1}{n} 
\sum_{i \in I}  |\ud F_i| \, \id_{\bR^I}}^{-1} \ud F, \quad F \equiv (F_i; i \in I) \in \cFV^I, \ n \in \bN.
\]
The only difference with \eqref{eq:rkhs_pre_image_fin_dim}, is the multiplicative factor $\sum_{i \in I}  |\ud F_i|$ in the expression $\ud C + (1/n) \sum_{i \in I}  |\ud F_i| \, \id_{\bR^I}$;  this is there, to ensure that $\ud F$ is always in the range of the latter matrix differential.\footnote{Multiplying $\id_{\bR^I}$ in \eqref{eq:rkhs_pre_image_fin_dim} with any strictly positive constant will result in the exact same development in the static setting of Section \ref{appsec:rkhs}. In contrast, multiplication of $\id_{\bR^I}$ by $\sum_{i \in I}  |\ud F_i|$ becomes important here because of the dynamic setting we are dealing with; to wit, we need to ensure that, locally in time, $\theta^{F; n}$ is well defined, as we do not assume a priori that the components of $(F_i; \, i \in I)$ are absolutely continuous with respect to $\clo$.}
We  introduce then a nondecreasing, $[0, \infty]$-valued process $\int_0^\cdot \norm{\ud F(t)}^2_{\ud C(t)}$ via
\[
\int_0^T \norm{\ud F(t)}^2_{\ud C(t)} \equiv    \lim_{n \to \infty} \uparrow \int_0^T \inner{\theta^{F; n} (t)}{\ud F(t)}_{\bR^I}, \quad T \in \bR_+.
\]
With   this in mind, we have the identification of the  stochastic aggregate rkHs $\RKH(C)$ as 
\[
\RKH (C) \equiv \set{F \in \cFV^I \ \Big| \  \int_0^T \norm{\ud F(t)}^2_{\ud C(t)} < \infty, \ \, \forall \, \,T \in \bR_+}.
\]
Indeed, it is straightforward to check that a given process $F \in \cFV^I$  belongs to the set on the right-hand-side of the above equality  if, and only if, the condition  \eqref{eq:rkhs_implicit_fin} holds for some predictable $\theta \equiv \theta^F$ satisfying \eqref{eq:rkhs_stoch_norm_fin}. In fact, and again by analogy with Lemma \ref{lem:norm_bare_hand}, one choice for such a process is
\begin{equation} \label{eq:integrand_from_covar_fin}
\theta^F = \lim_{n \to \infty} \theta^{F; n} = \lim_{n \to \infty} \pare{\ud C + (1/n) \sum_{i \in I}  |\ud F_i| \, \id_{\bR^I}}^{-1} \ud F.
\end{equation}

We note that a process $F \in \cFV^I$  can fail to belong to the stochastic aggregate rkHs $\RKH(C)$ for a variety of reasons. First, the variation process $\int_0^\cdot \sum_{i \in I}  |\ud F_i|$ may fail to be absolutely continuous with respect to $\clo$   defined in Remark \ref{rem:oper_clock_fin}. Secondly, even if $F = \int_0^\cdot f(t) \ud \clo(t)$ holds for appropriate predictable $f \equiv (f_i; i \in I)$, it may happen that $\set{f \in \rkh(c)} = \set{\norm{f}_c < \infty}$ fails to have full $\ppclo$-measure. Finally, even when  $F = \int_0^\cdot f(t) \ud \clo(t)$ holds and $\set{\norm{f}_c < \infty}$ has full $\ppclo$-measure, it can very well be that $\norm{f}_c$ fails to be square-integrable with respect to $\clo$, $\bP$-a.e., over some  compact time-interval(s).

\subsection{A digression on nondecreasing processes} \label{subsec:digr_inc_proc}

In   \S \ref{subsec:stoch_rkhs_gen},  we shall   extend the material of \S \ref{subsec:stoch_rkhs_fin}--\ref{subsec:stoch_rkhs_fin_alt} to general index sets. We shall need along the way some facts regarding nondecreasing processes; these are presented now.

For any two nondecreasing, though not necessarily right-continuous,   processes $\Phi$ and $\Psi$ with values in $(- \infty, \infty]$, we write
\[
\Phi \preceq \Psi \quad \Longleftrightarrow \quad \Phi \leq \Psi, \text{ and } 
\Psi - \Phi \text{ is nondecreasing on } \set{ \Phi < \infty}.
\]
We denote by $\FV_{\succeq}$   the class of all processes $\Phi \in \FV$ which are nonnegative and nondecreasing, i.e., with $\Phi \succeq 0$; furthermore, $\cFV_{\succeq}$ is the class of all elements of $\FV_{\succeq}$ with continuous paths.

\begin{lemma} 
\label{lem:ess_sup_inc}
Let $(\Lambda, \leq)$ be a directed set, and $(\Phi_\lambda; \, \lambda \in \Lambda)$ be a collection of processes in $\cFV_{\succeq}$ such that $\Phi_\lambda \preceq \Phi_\mu$ holds whenever $\lambda \leq \mu$ and $\esssup_{\lambda \in \Lambda} \Phi_\lambda(T) < \infty$ holds for all $T \in \bR_+$. There exists then a process in $\cFV_{\succeq}$,   denoted by $\bigvee_{\lambda \in \Lambda} \Phi_\lambda$, such that
\[
\bigvee_{\lambda \in \Lambda} \Phi_\lambda (T) = \esssup_{\lambda \in \Lambda} \Phi_\lambda(T),  \quad \forall \, T  \in \bR_+,
\]
as well as a nondecreasing sequence $(\lambda^n; n \in \bN)$ in $\Lambda$ with
\[
\lim_{n \to \infty}  \Phi_{\lambda^n} = \bigvee_{\lambda \in \Lambda} \Phi_\lambda.
\]
Here the convergence is monotone with respect to the $\preceq$ order; in particular,    $\pare{\Phi_{\lambda^n}; \, n \in \bN }$ converges to $\bigvee_{\lambda \in \Lambda} \Phi_\lambda$ uniformly on compact time-intervals.
\end{lemma}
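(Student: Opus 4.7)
The plan is to apply the classical essential-supremum theorem for random variables timewise, use directedness of $\Lambda$ to aggregate countably many selections into a single $\preceq$-monotone sequence, and exploit continuity together with denseness to pass from a countable set of times to all times.

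First step: fix an enumeration $(T_k)_{k \in \bN}$ of $\bQ_+$. For each $k$, applying the essential-supremum theorem to the family $(\Phi_\lambda(T_k))_{\lambda \in \Lambda}$ produces a countable sub-collection of $\Lambda$ whose pointwise supremum realises $\esssup_{\lambda \in \Lambda} \Phi_\lambda(T_k)$ a.s. By directedness of $(\Lambda, \leq)$ and the hypothesis $\lambda \leq \mu \Rightarrow \Phi_\lambda \preceq \Phi_\mu$ (which in particular gives $\Phi_\lambda(T_k) \leq \Phi_\mu(T_k)$), this sub-collection can be arranged as an $\leq$-increasing sequence $(\lambda^{k,m})_{m \in \bN}$. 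I then enumerate the countable set $\{\lambda^{k,m} : k,m \in \bN\}$ as $(\mu^\ell)$, and recursively use directedness once more to pick $\nu^n \in \Lambda$ with $\nu^{n-1} \leq \nu^n$ and $\mu^n \leq \nu^n$. The sequence $(\Phi_{\nu^n})$ is $\preceq$-increasing in $\cFV_{\succeq}$, and since each $\lambda^{k,m}$ is dominated in $\leq$ by some $\nu^n$, a straightforward comparison yields
\[
\sup_{n \in \bN} \Phi_{\nu^n}(T_k) = \esssup_{\lambda \in \Lambda} \Phi_\lambda(T_k) \quad \text{a.s., for every } k \in \bN.
\]

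Second step: set $\Phi_\infty(t) := \sup_n \Phi_{\nu^n}(t)$. The key observation --- and the reason the hypothesis is phrased in terms of $\preceq$ rather than mere inequality --- is that for $m \geq n$ the increment $\Phi_{\nu^m} - \Phi_{\nu^n}$ is nondecreasing, so letting $m \to \infty$ gives
\[
0 \leq \Phi_\infty(t) - \Phi_{\nu^n}(t) \leq \Phi_\infty(T) - \Phi_{\nu^n}(T), \quad 0 \leq t \leq T.
\]
Taking any rational $T > t$, the hypothesis $\esssup_\lambda \Phi_\lambda(T) < \infty$ together with the previous step gives $\Phi_\infty(T) < \infty$ a.s., so the right-hand side converges to $0$ a.s. Hence $(\Phi_{\nu^n})$ converges to $\Phi_\infty$ uniformly on compacts a.s., so $\Phi_\infty$ admits a continuous version; being nondecreasing with $\Phi_\infty(0) = 0$, this version lies in $\cFV_{\succeq}$.

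Third step: extend the essential-supremum identity from $\bQ_+$ to all of $\bR_+$. For arbitrary $\lambda \in \Lambda$, the countable a.s. inequalities $\Phi_\lambda(T_k) \leq \Phi_\infty(T_k)$ hold simultaneously outside a null set; continuity of both $\Phi_\lambda$ and $\Phi_\infty$ then upgrades them to $\Phi_\lambda(t) \leq \Phi_\infty(t)$ for every $t \in \bR_+$ a.s. Taking essential supremum over $\lambda$ yields $\esssup_\lambda \Phi_\lambda(t) \leq \Phi_\infty(t)$ for each $t$, and the reverse inequality is immediate. Defining $\bigvee_{\lambda \in \Lambda} \Phi_\lambda := \Phi_\infty$ and $\lambda^n := \nu^n$ delivers all required properties; the monotone convergence with respect to $\preceq$ and the uniform convergence on compacts have already been established. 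The main obstacle is producing a \emph{continuous} version of what is a priori only a collection of random variables indexed by $t$: a naive selection at each rational followed by a pointwise supremum need not be continuous, and it is precisely the compatibility of the directed order on $\Lambda$ with $\preceq$ that allows the selected sequence to be $\preceq$-monotone, forcing the uniform-on-compacts convergence that preserves continuity.
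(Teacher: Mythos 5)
Your proof is correct and follows essentially the same strategy as the paper's: select a countable subfamily realising the essential supremum at each time of a countable skeleton, aggregate these selections via directedness into a single $\leq$-nondecreasing (hence $\preceq$-monotone) sequence, and use the $\preceq$-increment comparison to force uniform convergence on compact time-intervals. The only divergence is the final extension step: you take a rational skeleton and upgrade the countably many a.e.\ inequalities $\Phi_\lambda(T_k) \leq \Phi_\infty(T_k)$ to all times via path continuity and density, whereas the paper takes an integer skeleton and compares increments over $[T,k]$ directly at the level of the essential suprema; both are valid, yours leaning slightly more on the continuity of the paths.
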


\begin{proof}
For all $T \in \bR_+$, define $\Psi(T) \dfn \esssup_{\lambda \in \Lambda}  \Phi_\lambda (T)$. At this point, $\pare{\Psi(T); \, T \in \bR_+}$ is simply a a collection of nonnegative random variables, without any path-continuity properties.
	
In view of the fact that $(\Lambda, \leq)$ is a directed set and  $(\Psi_\lambda; \, \lambda \in \Lambda)$ is $\preceq$-monotone,  we infer for every $T \in \bR_+$ the existence of a nondecreasing sequence $(\lambda^{T, n}; n \in \bN)$ in $\Lambda$ such that $\lim_{n \to \infty}  \Phi_{\lambda^{T,n}} (T) = \Psi(T)$, where the convergence is monotone. Since $(\Lambda, \leq)$ is a directed set, we can define inductively a nondecreasing sequence $(\lambda^n; \, n \in \bN)$ in $\Lambda$ with the property $\lambda^{k, n} \leq \lambda^n$ for all $k \in \bN$, $n \in \bN$ with $k \leq n$. Then, using again the facts that $(\Lambda, \leq)$ is a directed set and  $(\Psi_\lambda; \, \lambda \in \Lambda)$ is $\preceq$-monotone, we get $\lim_{n \to \infty}  \Phi_{\lambda^n} (k) = \Psi(k)$ for all $k \in \bN$. Since $\pare{\Phi_{\lambda^n}; \, n \in \bN}$ is $\preceq$-monotone, there exists $\widetilde{\Psi} \in \cFV$ such that $\lim_{n \to \infty}  \Phi_{\lambda^n} = \widetilde{\Psi}$, where this process-convergence is $\preceq$-monotone and, therefore, uniform on compact time-intervals. We need only  show that $\Psi(T) = \widetilde{\Psi} (T)$ holds for every $T \in \bR_+$.
	
Clearly, $\widetilde{\Psi} (T) \leq \Psi(T)$ holds for every $T \in \bR_+$, and we already know that $\widetilde{\Psi} (k) = \Psi(k)$ holds for every $k \in \bN$. Fix an arbitrary $T \in \bR_+$, and pick $k \in \bN$ with $T \leq k$. Recall that $(\lambda^{T, n}; n \in \bN)$ is a nondecreasing sequence in $\Lambda$, such that $\lim_{n \to \infty}  \Phi_{\lambda^{T,n}} (T) = \Psi(T)$. Let $(\mu^{T, n}; n \in \bN)$ be a nondecreasing sequence in $\Lambda$ such that $\lambda^{T,n} \leq \mu^{T, n}$ and $\lambda^n \leq \mu^{T, n}$ holds for all $n \in \bN$; of course, we still have $\lim_{n \to \infty}  \Phi_{\mu^{T,n}} (T) = \Psi(T)$. Since $\Phi_{\lambda^n} \preceq \Phi_{\mu^{T,n}}$, it follows that $\Phi_{\mu^{T,n}}(T) - \Phi_{\lambda^n} (T) \leq \Phi_{\mu^{T,n}}(k) - \Phi_{\lambda^n} (k)$ holds for all $n \in \bN$, and upon taking limits we obtain $\Psi(T) - \widetilde{\Psi}(T) \leq \Psi(k) - \widetilde{\Psi}(k) = 0$; this gives $\Psi(T) \leq \widetilde{\Psi}(T)$, and completes the argument. 
\end{proof}

\begin{remark} \label{rem:increasing_bddness_fail}
In the notation of the statement of Lemma \ref{lem:ess_sup_inc}, assume the existence of $T > 0$ such that $\bP \bra{\esssup_{\lambda \in \Lambda} \Phi_\lambda(T) = \infty} > 0$. Then, it is straightforward to infer the    existence of a nondecreasing sequence $(\lambda^n; \, n \in \bN)$ in $\Lambda$ such that $\bP \bra{\lim_{n \to \infty} \Phi_{\lambda^n} (T) = \infty} > 0$, where the limit inside the latter probability expression is nondecreasing.
\end{remark}

\subsection{A stochastic analogue of rkHs: the general case} \label{subsec:stoch_rkhs_gen}

As in Appendix \ref{appsec:rkhs}, for  an arbitrary given, nonempty index set $I,$  we use $\Fin (I)$ and $\Cou (I)$ to denote, respectively,  the collection of all finite and countable subsets of $I$. We fix a stochastic aggregate kernel $C\equiv \pare{C_{i j}; \, (i, j) \in I \times I} \in \cFV^{I \times I}$  as in Definition \ref{def:stoch_aggr_kernel}.

For any collection of processes $F = F_I \equiv (F_i; i \in I) \in \cFV^I,$ and  any given subset $J \subseteq I$, we let $F_J \equiv (F_i; \, i \in J)  \in \cFV^J$. The processes
\[
\int_{0}^\cdot \norm{\ud F_J (t)}^2_{\ud C_{J J} (t)}, \quad J \in \Fin (I) 
\]
are then defined as in \S \ref{subsec:stoch_rkhs_fin_alt}; in view of \eqref{eq:rkh_norm_incr},  we have formally
\begin{equation} \label{eq:int_rkh_norm_incr_formal}
J  \in \Fin(I), Q \in \Fin(I) \text{ with } J \subseteq Q \quad \Longrightarrow \quad \norm{\ud F_J}^2_{\ud C_{JJ}} \leq \norm{\ud F_Q}^2_{\ud C_{Q Q}}.
\end{equation}
Indeed, the inequality holds  because, formally once again, $\norm{\ud F_J}^2_{\ud C_{JJ}}$ 
is the squared norm of the orthogonal $\rkh(\ud C_{QQ})$-projection of $\ud F_Q$ on $\rkh (\ud C_{QQ}; J)$; we recall again the notation in    \S \ref{subsec:rkh_fin}. Working with the proper definitions of these quantities as in \S \ref{subsec:stoch_rkhs_fin}, and recalling the notation of  \S \ref{subsec:digr_inc_proc}, we obtain a rigorous and precise version of the comparison \eqref{eq:int_rkh_norm_incr_formal} as   follows:
\begin{equation} \label{eq:int_rkh_norm_incr_proper}
\int_{0}^\cdot \norm{\ud F_J (t)}^2_{\ud C_{JJ} (t)} \preceq \int_{0}^\cdot \norm{\ud F_Q (t)}^2_{\ud C_{Q Q} (t)}, \quad J  \subseteq Q \in \Fin(I).
\end{equation}

By analogy with Lemma \ref{lem:rkhs_norm_charact} and   Remark \ref{rem:rkhs_norm_charact}, we define   now the \textbf{stochastic aggregate rkHs} associated with the  given stochastic aggregate kernel $C$, as the collection of processes
\begin{equation} 
\label{eq:FV2C}
\RKH (C) \dfn \set{F \in \cFV^I \ \Big| \   \esssup_{J \in \Fin(I)} \int_{0}^T \norm{\ud F_J (t)}^2_{\ud C_{JJ} (t)} < \infty, \quad \forall \ T \in \bR_+}.
\end{equation}
This space will accommodate the cumulative covariations of the extended stochastic integrals we will construct in the next Section \ref{sec:stoch_int}  with respect to a   collection $P = (P_i ; i  \in I) $ of continuous semimartingales. The  ``internal'' covariations of these integrands, $C_{ij} = [ P_i, P_j]$  as in \eqref{C:semimart}, will be represented by the stochastic aggregate kernel $C$   of Definition \ref{def:stoch_aggr_kernel}.

We consider now an arbitrary element $F \equiv (F_i; \, i \in I)$ of the stochastic aggregate rkHs  $\RKH (C)$ just defined. In view of Lemma \ref{lem:ess_sup_inc}, and of the fact that   $\Fin(I)$ equipped with the usual set-inclusion order is an ordered set, the comparison \eqref{eq:int_rkh_norm_incr_proper} implies that    an ``essential supremum'' process can be defined via
\begin{equation} \label{eq:stoch_rkhs_general}
\int_{0}^\cdot \norm{\ud F (t)}^2_{\ud C (t)} \dfn \bigvee_{J \in \Fin(I) } \int_{0}^\cdot \norm{\ud F_J (t)}^2_{\ud C_{JJ} (t)}.
\end{equation}
Furthermore, there exists a nondecreasing sequence $(J^n; \, n \in \bN)$ in $\Fin(I)$ such that
\begin{equation} \label{eq:stoch_rkhs_approx}
\int_{0}^\cdot \norm{\ud F (t)}^2_{\ud C (t)} = \lim_{n \to \infty} \int_{0}^\cdot \norm{\ud F_{J^n} (t)}^2_{\ud C_{J^n J^n} (t)},
\end{equation}
where the last process-convergence is $\preceq$-monotone. If $I$ is at most countably infinite, then \eqref{eq:stoch_rkhs_approx} holds for \emph{any} nondecreasing sequence $(J^n; \, n \in \bN)$ in $\Fin(I)$ with $\bigcup_{n \in \bN} J^n = I$.

For any finite subset $J \in \Fin(I)$ with $j \in J$, we have the identity 
$\int_0^\cdot \norm{\ud C_{J j} (t)}^2_{\ud C_{JJ}(t)} = C_{jj}$; it follows that $C_{I j} \in \RKH(C)$ and
\[
\int_0^\cdot \norm{\ud C_{I j} (t)}^2_{\ud C(t)} = C_{jj}, \quad j \in I
\]
hold. Furthermore, for $ F \in \RKH(C)$ and $H \in \RKH(C)$, we use polarization to define 
\[
\int_0^\cdot \inner{\ud F(t)}{\ud H(t)}_{\ud C(t)} = \frac{1}{4} \left( \int_0^\cdot \norm{\ud (F + H) (t)}^2_{\ud C(t)} - \int_0^\cdot \norm{\ud (F - H) (t)}^2_{\ud C(t)} \right).
\]
A straightforward approximation argument shows that the reproducing kernel relation \eqref{eq:reproducing_stochastic} is   valid once again. Finally, for $F \in \RKH(C)$ and $H \in \RKH(C)$,  we have
\[
\int_0^\cdot \abs{\inner{\ud F(t)}{\ud H(t)}_{\ud C(t)} } \leq \sqrt{\int_0^\cdot \norm{\ud F (t)}^2_{\ud C (t)} } \sqrt{ \int_0^\cdot \norm{\ud H (t)}^2_{\ud C (t)} }.
\]

\begin{remark} 
\label{rem:stoch_rkhs_continuous}
Suppose that the index set $I$ can be endowed with a topology admitting a countable dense subset $Q$, and that there exists $\clo \in \cFV_{\succeq}$ with the property
\[
C_{ij} = \int_0^\cdot c_{ij} (t) \ud \clo(t), \quad \forall ~ (i, j) \in I \times I.
\]
Here $c : (\Omega \times \bR_+) \times (I \times I) \to \bR$ is a $(\cP \otimes \cB (I \times I))$-measurable random field  such that, for $\ppclo$-a.e. $(\omega, t) \in \Omega \times \bR_+$, $c(\omega, t)$ is a kernel on $I \times I$ and has the following properties:
\begin{itemize}
	\item $(c_{ij}(\omega, t); i \in I) \in \rkh(c(\omega, t))$ is continuous in the topology of $I$, for every $j \in I$;
	\item for every $i \in I$, there exists an open set $J(\omega, t, i) \subseteq I$ with $\sup_{j \in J (\omega, t, i)} c_{jj}(\omega, t) < \infty$.
\end{itemize}
(For example, note that these properties always hold, when $I$ is at most countable and endowed with the discrete topology. In \S \ref{subsec:HJM}, we shall see an example with uncountable $I$.)

Then, consulting Remark \ref{rem:rkhs_continuous_dense} in the Appendix, it is straightforward to check that a given $F \equiv (F_i; \, i \in I) \in \cFV^I$ belongs to the stochastic aggregate rkHs $\RKH(C)$ if, and only if,  the representation  $F = \int_0^\cdot f(t) \ud \clo (t)$ holds for some $(\cP \otimes \cB (I))$-measurable-measurable $f :  (\Omega \times \bR_+) \times I \to \bR$ with $f(\omega, t) \in \rkh (c(\omega,t))$ for $\ppclo$-a.e. $(\omega, t) \in \Omega \times \bR_+$, and
\[
\int_0^T \norm{f(t)}_{c(t)}^2 \ud \clo (t) < \infty, \quad \bP \text{-a.e.}, \quad \forall \, T \in \bR_+. 
\]
\end{remark}

\begin{remark}[Independent Brownian case]
\label{rem:Brownian_rkHs}
Let $I$ an arbitrary index set. For any $J \in \Cou(I)$, define $\ell_J^2$ as the Hilbert space consisting of all $y \equiv (y_j; j \in J)$ with the property $\sum_{j \in J} |y_j|^2 < \infty$. We equip this space $\ell_J^2$ with the inner product $\inner{\cdot}{\cdot}_{\ell^2_J}$ defined via 
\[
\inner{y}{z}_{\ell_J^2} = \sum_{j \in J} y_j z_j, \qquad y = (y_j; j \in J) \in \ell_J^2, \quad z = (z_j; j \in J) \in \ell_J^2.
\]

Suppose now that $C_{ii}(t) = t$, $t \in \bR_+$, holds for all $i \in I$, and $C_{ij} \equiv 0$ whenever $I \ni i \neq j \in I$. This specification corresponds to a continuous positive-definite stochastic  kernel on $I$ generated by a collection of independent Brownian motions. In this context, it is straightforward to see that $F \equiv (F_i; \, i \in I) \in \RKH(C)$ if,  and only if, there exists $J \in \Cou(I)$ such that
\begin{itemize}
	\item $F_i \equiv 0$ for $i \in I \setminus J$;
	\item there exists a family $f_J \equiv (f_j; \, j \in J)$ of predictable processes with $\int_0^T \norm{f_J (t)}^2_{\ell^2_J} \ud t < \infty$ for all $T \in \bR_+$, and $F_j = \int_0^\cdot f_j(t) \ud t$ for all $j \in J$. 
\end{itemize}

The significance  of the spaces $\RKH(C)$ as in \eqref{eq:FV2C}, when the stochastic aggregate kernel $C$ has more complicated structure than the ``independent Brownian'' one just described, is to replace the local Euclidean geometry of the $\ell^2$ spaces with the rkHs structure of the kernel represented via $\ud C$.
\end{remark}

\subsection{Restrictions and projections}

The spaces $\RKH (C_{JJ})$ for $J \subseteq I$ are defined simply by considering restrictions of elements on $J$. Then, similarly to Lemma \ref{lem:rkhs_norm_charact} and Remark \ref{rem:rkhs_norm_charact}, $F \in \RKH (C)$ holds if and only if $F_J \in \RKH (C_{JJ})$ holds for all $J \in \Cou(I)$. In this case, there exists $Q \equiv Q(F) \in \Cou(I)$ such that the process-equality
\[
\int_{0}^\cdot \norm{\ud F (t)}^2_{\ud C (t)} = \int_{0}^\cdot \norm{\ud F_Q (t)}^2_{\ud C_{Q Q} (t)}
\]
is valid. Indeed, in the notation of \eqref{eq:stoch_rkhs_approx}, $Q = \bigcup_{n \in \bN} J^n$.

For $J \in \Fin(I)$, the mapping
\[
\RKH(C_{JJ}) \ni \int_0^\cdot \sum_{j \in J} \theta_j (t) \ud C_{J j} (t) \mapsto \int_0^\cdot \sum_{j \in J} \theta_j (t) \ud C_{I j} (t) \in \RKH(C)
\]
is injective, and we call $\RKH(C; J)$ its image. This way, $\RKH(C_{JJ})$ is isometric to $\RKH(C; J)$; the inverse of the previous mapping is simply $\RKH(C; J) \ni F \mapsto F_J \in \RKH(C_{JJ})$.

\section{Stochastic Integration for Arbitrary Collections of Continuous Semimartingales} \label{sec:stoch_int}

\subsection{Continuous-semimartingale topology} \label{subsec:semimart_top}
On the set $\FV$ of adapted and right-continuous scalar processes of finite first variation on compact time intervals, we define the subadditive functional $\dnorm{\cdot}_{\FV}  : \FV \to [0,1]$  via
\begin{equation} 
\label{eq:metric_TV}
\dnorm{B}_{\FV} \dfn \sum_{k \in \bN} 2^{-k} \, \bE^\bP \bra{1 \wedge \int_0^k \abs{\ud B(t)}}, \quad B \in \FV.
\end{equation}
We consider also the topology generated by the translation-invariant metric $\FV \times \FV \ni (A, B) \mapsto \dnorm{B - A}_{\FV}$. Convergence in this topology amounts to  convergence  in probability of the  total variation, over compact  intervals.

Recall that $\cSem$ denotes the class of all continuous, scalar  semimartingales $X\equiv B + L$ with $X(0) = 0$. Here  $X \equiv B + L$ expresses the Doob-Meyer decomposition of $X$,  as the sum of $B \in \cFV$ and $L \in \cMloc$. We  introduce a subadditive functional $\dnorm{\cdot}_{\cSem} : \cSem \to [0,1]$ via 
\[
\dnorm{X}_{\cSem} \dfn \dnorm{B}_{\FV} + \dnorm{ [L, L]^{1/2} }_{\FV}, \qquad X \equiv B + L \in \cSem.
\]
The $\cSem$-topology, generated by the translation-invariant metric $\cSem \times \cSem \ni (X, Z)    \mapsto \dnorm{Z - X}_{\cSem}$ can be seen to coincide with the (localised version of the so-called) semimartingale topology of \cite{MR544800}, restricted to continuous semimartingales.

\smallskip
We shall fix from now onwards a collection $P \equiv (P_i; \, i \in I) \in \cSem^I$ and write
\[
P_i = A_i + M_i ,  \quad i \in I ,
\]
where $A \equiv (A_i; \, i \in I) \in \cFV^I$ and $M \equiv (M_i; \, i \in I) \in \cMloci$. We then define $C \equiv (C_{ij}; \, (i, j) \in I \times I) \in \cFV^{I \times I}$ via $C_{i j} \dfn [P_i, P_j]  = [M_i, M_j]$, $ (i, j) \in I \times I$ as in \eqref{C:semimart}.

For $P \equiv (P_i; \, i \in I) \in \cSem^I$, we shall denote by $\Sem(P)$  the $\cSem$-closure of the set of all stochastic integrals   which can be formed  using only a finite number of components of $P$ as integrators, and via use of simple predictable integrands. When $I$ has finite cardinality, $\Sem(P)$ coincides with the collection of all stochastic integrals that can be formed using $P$ as integrator, via use of vector stochastic integration; see \cite{MR544800} for a proof of this last  claim.

\subsection{Roadmap}

In order to set the stage and introduce some of the main actors, let us offer a bit of a preview of what is to come. We shall eventually establish in \S \ref{subsec:isomorph_semimarts}   necessary and sufficient conditions, under which    a bijection exists between the space $\Sem(P)$ on the one hand, and the  stochastic aggregate rkHs   $\RKH (C)$ of \eqref{eq:FV2C} on the other. The first of these spaces accommodates the ``extended stochastic integrals'' with respect to the collection  of continuous semimartingales $P \equiv (P_i; \, i \in I)$, whereas the second space will accommodate the ``admissible extended integrands'' of this theory, namely, the cumulative covariations of these extended stochastic integrals with the integrators $(P_i; \, i \in I)$.  The exact structural condition needed  for this bijection, appears in Theorem \ref{thm:struct_rkhs}: \emph{The drift process $A \equiv (A_i; \, i \in I)$ of $P$ has to belong to the stochastic aggregate rkHs $\RKH(C)$.} We shall see also that,   when  such a bijection exists and $\RKH (C)$  is equipped with the metric $\RKH (C) \times \RKH (C) \ni (F, H) \mapsto \dnorm{H - F}_{\RKH(C)}$ for 
\begin{equation} \label{eq:metric_VC}
\dnorm{F}_{\RKH(C)} \dfn  \dnorm{ \left( \int_{0}^\cdot  \norm{\ud F (t)}^2_{\ud C (t)} \right)^{1/2}}_{\FV}, \quad F \in \RKH(C)
\end{equation}
as in  \eqref{eq:stoch_rkhs_general} and  \eqref{eq:metric_TV}, the space $\Sem (P)$ is in fact \emph{isomorphic} to $\RKH (C)$.  This feature will allow  us   to characterize in Proposition  \ref{prop:semimart_closure_charact} the  set $\Sem(P)$  purely in terms of cumulative covariations.

\subsection{Isomorphism  for continuous local martingales} \label{subsec:isometry_loc_marts}

In preparation for obtaining the exact structural conditions, under which the space  $\Sem (P)$ of extended stochastic integrals and the  stochastic aggregate rkHs   $\RKH (C)$ of\eqref{eq:FV2C} are isomorphic to each other, we consider first the case where $P \equiv (P_i; \, i \in I) \in \cSem^I$ is a collection of continuous local martingales. We shall then use the alternative, more suggestive  notation $M \equiv (M_i; \, i \in I)$ instead of $P$, and take $M_i(0) = 0$ for all $i \in I$ without loss of generality. Recall  the stochastic aggregate kernel $C \equiv (C_{ij}; \, (i, j) \in I \times I)$   defined via $C_{ij} \dfn [M_i, M_j]$ for $(i, j) \in I \times I$ as in \eqref{C:semimart}.

\subsubsection*{Index sets of finite cardinality}

We start by assuming that $I$ is a nonempty finite set. As we have noted, $\Sem(M)$ coincides then with the collection of all local martingales that start  from zero and are stochastic integrals with respect to $M$. Consider then an arbitrary $L \in \Sem(M)$, and write $L = \int_0^\cdot \sum_{i \in I} \theta_i (t) \ud M_i(t)$, where the components of the vector process $\theta \equiv (\theta_i; \, i \in I)$ are predictable and satisfy the local integrability condition 
\begin{equation} \label{eq:integrability_classical}
\int_0^T \sum_{(i, j) \in I \times I} \theta_i(t) \ud C_{i j} (t) \theta_j(t) < \infty, \quad \forall \ T \in \bR_+.
\end{equation}
This condition is necessary for $L$ to be defined, as the quantity in \eqref{eq:integrability_classical} equals $[L, L] (T)$, which has to be finite. With $F \dfn ([L, M_i]; \, i \in I) \in \cFV^I$, the quantity 
in  \eqref{eq:integrability_classical}   equals $\int_0^T \norm{\ud F (t)}^2_{\ud C(t)}$, and consequently  $F \in \RKH(C)$. Furthermore, given any two local martingales $L \in \Sem(M)$, $N \in \Sem(M)$ with  $([L, M_i]; \, i \in I) = ([N, M_i]; \, i \in I)$, and denoting by $F \in \RKH(C)$ this common value,  we note that $[L - N, L- N] = [L, L] + [N, N] - 2 [L, N] = 0$ holds in light of the identities $[L, L] = [N, N] = [L, N] = \int_0^\cdot \norm{\ud F (t)}^2_{\ud C(t)}$. We conclude that the family $([L, M_i]; \, i \in I)$ belongs in $\RKH(C)$, and that the resulting mapping
\begin{equation} 
\label{eq:mapping_mart}
\Sem(M) \ni L \mapsto ([L, M_i]; \, i \in I) \in \RKH(C)
\end{equation}
is one-to-one.

We argue   that the mapping of \eqref{eq:mapping_mart} is also \emph{onto}, i.e., a \emph{bijection}. To see this, we fix  an arbitrary collection $F \equiv (F_i; i \in I) \in \RKH(C)$, define the predictable process $\theta^F$ as in \eqref{eq:integrand_from_covar_fin}, and note that $F_i = \int_0^\cdot \sum_{j=1}^n \theta^F_j (t) \ud C_{ij} (t)$ for $i \in I$ and
\[
\int_0^T \sum_{(i, j) \in I \times I} \theta^F_i(t) \ud C_{i j} (t) \theta^F_j(t) = \int_0^T \norm{\ud F(t)}^2_{\ud C(t)} < \infty, \quad \forall \,  T \in \bR_+.
\]
This integrability condition implies that the process
\begin{equation} \label{eq:mart_from_cov_fin}
M^F \dfn \int_0^\cdot \sum_{i \in I} \theta^F_i(t) \ud M_i(t)
\end{equation}
is a well-defined element of the space $\Sem(M)$, namely, a continuous local martingale with cross-variations given by $[M^F, M_i] = F_i$, for all $i \in I$, and with quadratic variation  
\begin{equation} 
\label{eq:isometry_fin}
\bra{M^F, M^F} = \int_0^\cdot \norm{\ud F(t)}^2_{\ud C(t)}.
\end{equation}
Since, formally, $\ud M^F = \sum_{i \in I} \theta^F_i \ud M_i = \inner{\ud F}{\ud M}_{\ud C}$, we write, suggestively,
\begin{equation} 
\label{eq:local_mart_suggestive_nota_fin}
M^F = \int_0^\cdot \inner{\ud F(t)}{\ud M(t)}_{\ud C(t)}, \quad F \in \RKH(C),
\end{equation}
for $M^F \in \Sem(M)$ in \eqref{eq:mart_from_cov_fin}.

\subsubsection*{General index sets}

We extend now the previous discussion, valid for finite index sets, to arbitrary nonempty index sets $I$. The first order of business, is again to ensure that the mapping $\Sem (M) \ni L \mapsto ([L, M_i]; \, i \in I)\in \cFV^I$ is actually $\RKH(C)$-valued. We state and prove a slightly stronger statement, for later use.

\begin{lemma} \label{lem:mapping_well_def_semimart}
For any semimartingale  $Z$, it holds that $\pare{\bra{Z, M_i}; \, i \in I} \in \RKH (C)$.
\end{lemma}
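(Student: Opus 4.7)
My plan is as follows. First, I would reduce to the case where $Z$ is a continuous local martingale. Indeed, writing $Z = Z(0) + N + V$ with $N \in \Mloc$ and $V \in \FV$, the covariation of any finite variation process with the continuous process $M_i$ vanishes, and likewise the purely discontinuous part of $N$ contributes zero since $\Delta M_i \equiv 0$. Hence $[Z, M_i] = [N^c, M_i]$, where $N^c$ denotes the continuous local martingale part of $N$. So it suffices to prove the claim assuming $Z = L$ is a continuous local martingale.

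Next, for each $J \in \Fin(I)$, I would apply the Galtchouk--Kunita--Watanabe decomposition relative to the finite collection $M_J := (M_i; \, i \in J)$: this yields $L = L^{(J)} + L^{J,\perp}$, where $L^{(J)} \in \Sem(M_J)$ lies in the stable subspace generated by $M_J$ and $L^{J,\perp}$ is strongly orthogonal to it, in the sense that $[L^{J,\perp}, M_i] = 0$ for every $i \in J$. In particular $([L, M_i]; \, i \in J) = ([L^{(J)}, M_i]; \, i \in J)$, so the finite-dimensional isomorphism of the previous subsection (especially \eqref{eq:isometry_fin}) identifies $F_J := ([L, M_i]; \, i \in J)$ as an element of $\RKH(C_{JJ})$ satisfying
$$\int_0^\cdot \norm{\ud F_J(t)}^2_{\ud C_{JJ}(t)} \;=\; [L^{(J)}, L^{(J)}].$$

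Combining strong orthogonality with the identity $[L, L] = [L^{(J)}, L^{(J)}] + [L^{J,\perp}, L^{J,\perp}]$ would then produce the \emph{uniform} bound
$$\int_0^T \norm{\ud F_J(t)}^2_{\ud C_{JJ}(t)} \;\leq\; [L, L](T), \qquad \forall \, J \in \Fin(I), \ T \in \bR_+.$$
Since $[L, L](T) < \infty$ $\bP$-a.s.\ for every $T$, passing to the essential supremum over $J \in \Fin(I)$ keeps the bound finite, and the definition \eqref{eq:FV2C} of $\RKH(C)$ delivers $([Z, M_i]; \, i \in I) \in \RKH(C)$, as required.

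The only nontrivial step is the Galtchouk--Kunita--Watanabe decomposition for finite $J$: since $L$ is a general continuous local martingale, not \emph{a priori} representable as a stochastic integral against $M_J$, one must verify that the orthogonal projection onto $\Sem(M_J)$ actually exists. This is classical, handled by localisation to the Hilbert space $\cM^2$ of $\bL^2$-bounded martingales, within which the stable subspace generated by $M_J$ is closed and therefore admits an orthogonal projection; everything else in the argument is bookkeeping organised around that single ingredient.
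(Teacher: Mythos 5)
Your proposal is correct and follows essentially the same route as the paper: reduce to the continuous local martingale part, use the Kunita--Watanabe decomposition relative to each finite subcollection $M_J$ to identify $\int_0^\cdot \norm{\ud F_J(t)}^2_{\ud C_{JJ}(t)}$ with the quadratic variation of the projected martingale, bound this uniformly by $[Z,Z]$ via orthogonality, and invoke the definition \eqref{eq:FV2C}. The extra remark on existence of the projection via localisation to $\bL^2$-bounded martingales is a standard point the paper leaves implicit.
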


\begin{proof}
If $L$ denotes the uniquely-defined continuous local martingale part of $Z$, then $\bra{Z, M_i} = \bra{L, M_i}$ holds for all $i \in I$. Therefore, we may---and will---assume that $Z \in \cMloc$.
	
Let $F \dfn \pare{\bra{Z, M_i}; \, i \in I}$. For any given finite subset $J \in \Fin(I)$, let $N \equiv N^J$ denote the unique element of $\Sem(M_J) \subseteq \Sem(M)$ with     $F_j = \bra{ N, M_j}$   for all $j \in J$ (such $N$ exists from the Kunita-Watanabe decomposition). By the finite-index case treated previously, we have
\[
\int_0^\cdot \norm{\ud F_J (t)}^2_{\ud C_{JJ} (t)} = \bra{N, N} \leq \bra{Z, Z}.
\]
But then it follows from \eqref{eq:FV2C}, that $\int_0^T \norm{\ud F (t)}^2_{\ud C (t)} \leq \bra{Z, Z}(T) < \infty$ holds for every $T \in \bR_+$, establishing   $F \in \RKH (C)$.
\end{proof}

Lemma \ref{lem:mapping_well_def_semimart} shows that the mapping of \eqref{eq:mapping_mart} is well defined in our new context as well. We argue below that, just as in the finite-index case, this mapping  is  actually a \emph{bijection}. This result is not stated formally; it will be subsumed into the more general Theorem \ref{thm:struct_rkhs} below.

\begin{proof}[Proof of bijectivity in  \eqref{eq:mapping_mart}]
We  show first, that the mapping in \eqref{eq:mapping_mart} is one-to-one. Suppose $L \in \Sem(M)$ and $N \in \Sem(M)$ are such that $([L, M_i]; \, i \in I) = ([N, M_i]; \, i \in I)$ holds, and call $F \in \RKH(C)$ this common value. For any $J \in \Fin(I)$, let $L^J$ and $N^J$ be the unique elements in the Kunita-Watanabe decompositions on $\Sem(M_J)$ of $L$ and $N$, respectively, and note
\[
[L^J, M_j] = [L, M_j] = F_j = [N, M_j] = [N^J, M_j], \quad \forall \, j \in J.
\]
From the discussion of the finite-index-set case, it follows that $L^J = N^J$. One may then pick a common nondecreasing sequence $(J_n; \, n \in \bN)$ in $\Fin(I)$ with the property  
\[
\lim_{n \to \infty} \downarrow \big[L - L^{J_n}, \, L - L^{J_n}\big] = 0 =   \lim_{n \to \infty} \downarrow \big[N - N^{J_n}, \, N - N^{J_n}\big],
\]
from which   $L = N$ follows, showing that the mapping \eqref{eq:mapping_mart} is one-to-one.

We further argue that the mapping of \eqref{eq:mapping_mart} is also onto. We start with a given $F \in \RKH (C)$, and let $(J^n; \; n \in \bN)$ be a sequence in $\Fin(I)$ such that \eqref{eq:stoch_rkhs_approx} holds. Since $G_n \dfn F_{J^n} \in \RKH (C_{J^n J^n})$, we may define $M^{G_n}_{J^n} \in \cSem (M_{J^n})$ for all $n \in \bN$, in the notation of \eqref{eq:mart_from_cov_fin}. At this point, the process-isometries    \eqref{eq:isometry_fin},  \eqref{eq:stoch_rkhs_approx}  imply that the sequence $\big( M^{G_n}_{J^n}; \, n \in \bN \big)$ is Cauchy in $\cSem (M)$; letting $M^F \dfn \cSem$-$\lim_{n \to \infty} M^{G_n}_{J^n} \in \Sem(M)$, we obtain \eqref{eq:isometry_fin} in our present context; to wit, 
\begin{equation} 
\label{eq:isometry_gen}
\bra{M^F, M^F} = \int_0^\cdot \norm{\ud F(t)}^2_{\ud C(t)}.
\end{equation}
We claim   that $\bra{M^F, M_i} = F_i$ holds for all $i \in I$. To see this,  we fix an arbitrary index $i \in I$ and, for each $n \in \bN$, let $Q^n = J^n \cup \set{i}$. Since $H_n \dfn F_{Q^n} \in \RKH (C_{Q^n Q^n})$, it holds that $M^{H_n}_{Q^n} \in \cSem (M_{Q^n})$, for all $n \in \bN$, 
in the notation of \eqref{eq:mart_from_cov_fin}. Since \eqref{eq:stoch_rkhs_approx} holds for $(J^n; \; n \in \bN)$ and $J_n \subseteq H_n$ for all $n \in \bN$, it is immediate that $\cSem$-$\lim_{n \to \infty} M^{H_n}_{Q^n} = M^F$; and because  $i \in Q^n$, we have $\big[ M^{H_n}_{Q^n}, M_i \big] = F_i$  for all $n \in \bN$, so $[M^F, M_i] = F_i$ also holds. But $i \in I$ is arbitrary, so in fact  $([M^F, M_i]; \, i \in I) = F$,  showing that the mapping of \eqref{eq:mapping_mart} is indeed a bijection.
\end{proof}

\subsection{Stochastic integrals under structural condition for continuous semimartingales}
\label{subsec:struct_cond}

Let us  return now to a general collection $P \equiv (P_i; i \in I) \in \cSem^I$ of continuous semimartingales, and recall  their covariation structure $C \equiv (C_{ij}; (i, j) \in I \times I)$ as in   \eqref{C:semimart}.

According to Lemma \ref{lem:mapping_well_def_semimart}, the mapping $\cSem (P) \ni Z \mapsto ([Z, P_i]; \, i \in I)$ takes values in $\RKH (C)$. We saw in \S \ref{subsec:isometry_loc_marts} that the mapping
\begin{equation} 
\label{eq:mapping_sem}
\Sem(P) \ni Z \longmapsto ([Z, P_i]; \, i \in I) \in \RKH(C)
\end{equation}
is a bijection when $A \equiv 0$. Theorem \ref{thm:struct_rkhs}  below,  states that  such bijectivity is valid  under the more general  \textbf{structural condition} $A \in \RKH(C)$; and even more to the point, that this condition  is actually \emph{equivalent} to the the mapping in  \eqref{eq:mapping_sem} being bijective.

We begin with an intermediate but important structural result which provides, under the condition $A \in \RKH(C)$, a precise description for   the space $\cSem (P)$ of extended stochastic integrals. In accordance with \eqref{eq:local_mart_suggestive_nota_fin}, and from the discussion in \S \ref{subsec:isometry_loc_marts}, we set
\begin{equation} 
\label{eq:local_mart_suggestive_nota}
M^F \equiv \int_0^\cdot \inner{\ud F(t)}{\ud M(t)}_{\ud C(t)}, \quad F \in \RKH(C)
\end{equation}
for the process $M^F \in \Sem(M) \subseteq \cMloc$ that is uniquely determined by $[M^F, P_i] = [M^F, M_i] = F_i$, for all $i \in I$.

\begin{proposition}
\label{prop:semimart_closure_charact}
Under the structural condition $A \in \RKH(C)$, the space $\Sem(P)$ of extended stochastic integrals admits the representation
\begin{equation} \label{eq:semimart_closure_charact}
\Sem(P) = \set{ \int_0^\cdot \inner{\ud F(t)}{\ud A(t)}_{\ud C(t)} + \int_0^\cdot \inner{\ud F(t)}{\ud M(t)}_{\ud C(t)}  \ \Big| \ F \in \RKH(C)}.
\end{equation}
Above, we use the notation of \eqref{eq:polarisation} and  \eqref{eq:local_mart_suggestive_nota}.
\end{proposition}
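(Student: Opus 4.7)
My plan is to establish both inclusions, with the structural assumption $A \in \RKH(C)$ playing a dual role: it makes $\int_0^\cdot \inner{\ud F}{\ud A}_{\ud C}$ well-defined via the polarisation identity \eqref{eq:polarisation}, and it supplies the Cauchy-Schwarz control needed to transfer finite-index approximations to the limit.

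For the inclusion ``$\supseteq$'', fix $F \in \RKH(C)$ and use \eqref{eq:stoch_rkhs_approx} to choose a nondecreasing sequence $(J^n)$ in $\Fin(I)$ with $\int_0^\cdot \norm{\ud F_{J^n}}^2_{\ud C_{J^n J^n}}$ converging to $\int_0^\cdot \norm{\ud F}^2_{\ud C}$ $\preceq$-monotonically. The finite-index construction of \S\ref{subsec:stoch_rkhs_fin_alt} yields predictable $\theta^n \equiv (\theta^n_j;\,j \in J^n)$ with $F_{J^n} = \int_0^\cdot \sum_{j \in J^n} \theta^n_j(t) \ud C_{J^n j}(t)$, and then
\[
Z^n \dfn \int_0^\cdot \sum_{j \in J^n} \theta^n_j(t) \ud P_j(t) = \int_0^\cdot \inner{\ud \widetilde{F}^n(t)}{\ud A(t)}_{\ud C(t)} + M^{F_{J^n}} \in \Sem(P_{J^n}) \subseteq \Sem(P),
\]
where $\widetilde{F}^n$ is the canonical embedding of $F_{J^n}$ into $\RKH(C;J^n)$. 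The $\cSem$-convergence $M^{F_{J^n}} \to M^F$ follows from the construction in \S\ref{subsec:isometry_loc_marts}, while the Cauchy-Schwarz bound of \S\ref{subsec:stoch_rkhs_gen},
\[
\int_0^T \abs{\inner{\ud (F - \widetilde{F}^n)(t)}{\ud A(t)}_{\ud C(t)}} \leq \sqrt{\int_0^T \norm{\ud (F - \widetilde{F}^n)(t)}^2_{\ud C(t)}} \sqrt{\int_0^T \norm{\ud A(t)}^2_{\ud C(t)}},
\]
transfers the convergence of the local-martingale parts into convergence of the drift integrals, since the first factor on the right tends to zero in probability. Summing the two limits gives the desired $\cSem$-convergence $Z^n \to \int_0^\cdot \inner{\ud F}{\ud A}_{\ud C} + M^F$, placing the limit in the closure $\Sem(P)$.

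For the reverse inclusion, given $Z \in \Sem(P)$, define $F \dfn ([Z, P_i];\, i \in I) \in \RKH(C)$ via Lemma \ref{lem:mapping_well_def_semimart} and decompose $Z = Z^A + Z^M$. Pick a $\cSem$-approximating sequence $Z^n \in \Sem(P_{J_n})$ with $J_n \in \Fin(I)$. The local-martingale parts $(Z^n)^M \in \Sem(M)$ converge in $\cSem$ to $Z^M$, so $Z^M \in \Sem(M)$, and the bijection of \S\ref{subsec:isometry_loc_marts} combined with $[Z^M, M_i] = [Z, P_i] = F_i$ forces $Z^M = M^F$. For the drift, setting $F^n \dfn ([Z^n, P_i];\,i \in I)$, the finite-index identity yields $(Z^n)^A = \int_0^\cdot \inner{\ud F^n(t)}{\ud A(t)}_{\ud C(t)}$. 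The quadratic-variation identity $[Z^n - Z, Z^n - Z] = \int_0^\cdot \norm{\ud (F^n - F)}^2_{\ud C}$ together with $\cSem$-convergence forces $\int_0^T \norm{\ud (F^n - F)}^2_{\ud C} \to 0$ in probability, and another application of Cauchy-Schwarz yields $(Z^n)^A \to \int_0^\cdot \inner{\ud F}{\ud A}_{\ud C}$ in the $\FV$-topology. Uniqueness of limits then gives $Z^A = \int_0^\cdot \inner{\ud F}{\ud A}_{\ud C}$, completing the identification.

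The crux of the argument is the identity $\int_0^\cdot \sum_{j \in J^n} \theta^n_j \ud A_j = \int_0^\cdot \inner{\ud \widetilde{F}^n}{\ud A}_{\ud C}$ connecting the classical finite-dimensional drift integral with the polarised rkHs expression; this is precisely where the structural condition $A \in \RKH(C)$ enters, ensuring that $\ud A$ lies formally in the range of $\ud C$ so that the reproducing-type pairing applies. Without this condition, the right-hand side of \eqref{eq:semimart_closure_charact} is not even well-defined and the Cauchy-Schwarz estimates that transfer martingale-part convergence to drift-part convergence collapse. I expect the bookkeeping of the finite-index approximations and the careful handling of the embedding $F_{J^n} \mapsto \widetilde{F}^n$ to be the most delicate, though routine, aspect of the proof.
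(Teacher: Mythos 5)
Your proposal is correct and follows essentially the same route as the paper: reduce to the finite-index case, then pass to the limit using the quadratic-variation isometry for the martingale parts and the Cauchy--Schwarz bound (which is where $A \in \RKH(C)$ enters) to control the drift parts. The one step you flag as the crux but justify only formally --- the identity $\int_0^\cdot \sum_{j \in J^n} \theta^n_j(t)\, \ud A_j(t) = \int_0^\cdot \inner{\ud \widetilde{F}^n(t)}{\ud A(t)}_{\ud C(t)}$ relating the finite-dimensional drift integral to the pairing over the full index set --- is precisely the point the paper takes care to prove, via the Kunita--Watanabe decomposition $M^A = M^{A_{J^n}} + N^n$ with $N^n$ strongly orthogonal to $\Sem(M_{J^n})$, which yields $\bra{M^{F^n}, M^{A_{J^n}}} = \bra{M^{F^n}, M^{A}}$; you should supply this (or an equivalent argument via the reproducing relation \eqref{eq:reproducing_stochastic}) to make the proof complete.
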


\begin{proof}
Assume first that $I$ is finite. Let $Z \in \Sem(P)$ and write $Z = \int_0^\cdot \inner{\theta (t)}{\ud P(t)}_{\bR^I}$, where $\theta \equiv ( \theta_j; \, j \in I) $ has to satisfy \eqref{eq:integrability_classical}, along with $\int_0^T \abs{\inner{\theta (t)}{\ud A(t)}_{\bR^I}} < \infty$, for all $T \in \bR_+$. Setting  $F \dfn \pare{[Z, P_i]; \, i \in I} = \int_0^\cdot \sum_{j \in I} \theta_j (t) \ud C_{I j} (t)$,  and given that $A \in \RKH(C)$,  we obtain
\[
\int_0^\cdot \abs{\inner{\theta (t)}{\ud A(t)}_{\bR^I}} = \int_0^\cdot \abs{\inner{\ud F (t)}{\ud A(t)}_{\ud C(t)}} \leq \sqrt{\int_0^\cdot \norm{\ud F(t)}^2_{\ud C(t)}}  \sqrt{\int_0^\cdot \norm{\ud A(t)}^2_{\ud C(t)}},
\]
where this last process is finitely-valued. In particular, the local integrability condition \eqref{eq:integrability_classical} is necessary and sufficient for the stochastic integral $\int_0^\cdot \inner{\theta (t)}{\ud P(t)}_{\bR^I}$ to be defined;   then,
\[
Z = \int_0^\cdot \inner{\theta (t)}{\ud A(t)}_{\bR^I} + \int_0^\cdot \inner{\theta (t)}{\ud M(t)}_{\bR^I} = \int_0^\cdot \inner{\ud F (t)}{\ud A(t)}_{\ud C(t)} + \inner{\ud F (t)}{\ud M(t)}_{\ud C(t)},
\]
and \eqref{eq:semimart_closure_charact} is established.

We drop now the assumption of finite cardinality for $I$. We start by fixing $Z \in \Sem(P)$,  and set  $F \dfn ([Z, P_i]; \, i \in I)$. Consider sequences $(J^n; \, n \in \bN)$ in $\Fin(I)$ and $(Z^n; \, n \in \bN)$ in $\cSem$, such that $Z^n \in \Sem(P_{J^n})$ holds for all $n \in \bN$, and $\cSem$-$\lim_{n \to \infty} Z^n = Z$. For each $n \in \bN$, let $F^n \dfn ([Z^n, P_i]; i \in I)$. Given the Kunita-Watanabe decomposition $M^A = M^{A_{J^n}} + N^n$, with $N^n$ strongly orthogonal to the local martingales in $\Sem(M_{J^n})$  for all   $n \in \bN$, it follows that
\begin{align*}
\int_0^\cdot \inner{\ud F^n_{J^n}(t)}{\ud A_{J^n} (t)}_{\ud C_{J^n J^n}(t)} &= \bra{M^{F^n}, M^{A_{J^n}}} \\
&= \bra{M^{F^n}, M^{A}} = \int_0^\cdot \inner{\ud F^n (t)}{\ud A (t)}_{\ud C (t)}.
\end{align*}	
Thus,   the just-established result covering the   case   of finite index sets  gives
\[
Z^n = \int_0^\cdot \inner{\ud F^n (t)}{\ud A (t)}_{\ud C (t)} + \int_0^\cdot \inner{\ud F^n (t)}{\ud M (t)}_{\ud C (t)}, \quad n \in \bN.
\]
Since $\cSem$-$\lim_{n \to \infty} Z^n = Z$ implies $\bP$-$\lim_{n \to \infty} [Z^n - Z, Z^n - Z] (T) = 0$ for all $T \in \bR$, and also since $[Z^n - Z, Z^n - Z] = \int_0^\cdot \norm{\ud F^n (t) - \ud F(t)}^2_{\ud C (t)}$, we obtain  
\[
Z = \int_0^\cdot \inner{\ud F (t)}{\ud A (t)}_{\ud C (t)} + \int_0^\cdot \inner{\ud F (t)}{\ud M (t)}_{\ud C (t)}.
\]
It follows that  $\Sem(P)$ is contained in the set on the right-hand side of \eqref{eq:semimart_closure_charact}.

Conversely, start with any given $F \in \RKH(C)$. Define the subset $\RKH(C; \Fin)$ of $\RKH(C)$ consisting processes as in \eqref{eq:rkhs_implicit_fin}, but with  $I$   replaced by some arbitrary finite subset $J \in \Fin(I)$, under the suitable integrability condition as in \eqref{eq:rkhs_stoch_norm_fin}; then, $\RKH(C; \Fin)$ is dense in $\RKH(C)$ under the  metric induced by \eqref{eq:metric_VC}. Consider a sequence $(F^n; n \in \bN)$ in $\RKH(C; \Fin)$ such that $\RKH(C)$-$\lim_{n \to \infty} F^n = F$. Then, with
\[
Z^n \dfn \int_0^\cdot \inner{\ud F^n (t)}{\ud A (t)}_{\ud C (t)} + \int_0^\cdot \inner{\ud F^n (t)}{\ud M (t)}_{\ud C (t)},
\]
we have $Z^n \in \Sem(P)$ in view of the finite-index case, and
\[
\cSem \text{-}\lim_{n \to \infty} Z^n = \int_0^\cdot \inner{\ud F (t)}{\ud A (t)}_{\ud C (t)} + \int_0^\cdot \inner{\ud F (t)}{\ud M (t)}_{\ud C (t)}
\]
follows as before. Thus, the set on the right-hand side of \eqref{eq:semimart_closure_charact} is contained in $\Sem(P)$, and the proof is complete.
\end{proof}

\subsection{Isomorphism for continuous semimartingales  and   structural conditions}
\label{subsec:isomorph_semimarts}

It follows from Proposition \ref{prop:semimart_closure_charact} that, under the structural condition $A \in \RKH(C)$, the mapping of \eqref{eq:mapping_sem} is a bijection, whose inverse is given by
\[
\RKH(C) \ni F \longmapsto \int_0^\cdot \inner{\ud F (t)}{\ud A (t)}_{\ud C (t)} + \int_0^\cdot \inner{\ud F (t)}{\ud M (t)}_{\ud C (t)} \in \Sem(P).
\]

Let us recall that $\cSem$-$\lim_{n \to \infty} L^n = L^\infty$ holds for a sequence $(L^n; \, n \in \bN)$ in $\Sem(M)$   if,  and only if, we have $\lim_{n \to \infty} [L^\infty - L^n, L^\infty - L^n] (T) = 0$   for all $T \in \bR_+$. This fact, along with the process-isometry \eqref{eq:isometry_gen},  shows that the spaces $\Sem(M)$ and $\RKH(C)$ are then \emph{metrically isomorphic}, when $\RKH(C)$ is equipped with the metric  of  \eqref{eq:metric_VC}.

The next theorem generalizes these observations very considerably. Coupled with Proposition  \ref{prop:semimart_closure_charact}, it provides our main result on stochastic integration with respect to an arbitrary family, possibly uncountably-infinite, of continuous semimartingales.

\begin{theorem} \label{thm:struct_rkhs}
The following statements are equivalent:
\begin{enumerate}
	\item The mapping $\Sem (P) \ni Z \mapsto ([Z, P_i]; \, i \in I) \in \RKH (C)$ is a bijection.
	\item $A \in \RKH(C)$.
\end{enumerate}
Under these  equivalent conditions, the space $\Sem(P)$ of extended stochastic integrals admits the representation \eqref{eq:semimart_closure_charact}, and is topologically isomorphic to the stochastic aggregate rkHs $\RKH(C)$ of \eqref{eq:FV2C}.
\end{theorem}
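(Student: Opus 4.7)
The proof breaks naturally into the implication $(2) \Rightarrow (1)$ together with the topological isomorphism under the structural condition, and the harder implication $(1) \Rightarrow (2)$.

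For $(2) \Rightarrow (1)$ plus topological isomorphism, I would invoke Proposition \ref{prop:semimart_closure_charact} directly: the representation \eqref{eq:semimart_closure_charact} exhibits the explicit inverse $F \mapsto \int_0^\cdot \inner{\ud F(t)}{\ud A(t)}_{\ud C(t)} + M^F$ of the covariation map, establishing bijectivity at once. Continuity of the covariation map is immediate from the estimate $\int_0^\cdot \norm{\ud F(t)}^2_{\ud C(t)} \leq [Z, Z]$ implicit in Lemma \ref{lem:mapping_well_def_semimart}. Continuity of the inverse combines the It\^o-type isometry $[M^{F^n} - M^F, M^{F^n} - M^F] = \int_0^\cdot \norm{\ud(F^n - F)(t)}^2_{\ud C(t)}$ with the Cauchy-Schwarz bound $\int_0^\cdot \abs{\inner{\ud (F^n - F)(t)}{\ud A(t)}_{\ud C(t)}} \leq \sqrt{\int_0^\cdot \norm{\ud(F^n - F)(t)}^2_{\ud C(t)}}\sqrt{\int_0^\cdot \norm{\ud A(t)}^2_{\ud C(t)}}$, where finiteness of the second factor uses $A \in \RKH(C)$.

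For $(1) \Rightarrow (2)$, the first ingredient is the pointwise range condition $\ud A_J \in \range \ud C_{JJ}$ a.e., extracted from injectivity for each $J \in \Fin(I)$. Given a simple predictable $\theta$ taking values in $\ker \ud C_{JJ}$ pointwise, the local martingale $\sum_{j \in J} \int \theta_j \ud M_j$ has vanishing quadratic variation, hence is identically zero, and in particular has zero covariation with \emph{every} $M_i$, $i \in I$, not only those in $J$. Therefore $Z := \sum_{j \in J} \int \theta_j \ud P_j = \sum_{j \in J} \int \theta_j \ud A_j \in \cFV \subseteq \Sem(P)$ has zero covariation with every $P_i$, and injectivity forces $Z = 0$. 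A measurable-selection argument over $\theta$ yields the stated pointwise range condition, which is the local content of $A_J \in \RKH(C_{JJ})$.

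The quantitative content---local integrability of $\int_0^\cdot \norm{\ud A_J(t)}^2_{\ud C_{JJ}(t)}$ for each $J$ and the $\esssup_J$ bound in \eqref{eq:FV2C}---is extracted from surjectivity via continuity of the inverse. Both $\RKH(C)$ (with metric \eqref{eq:metric_VC}) and $\Sem(P) \subseteq \cSem$ are complete metrizable topological vector spaces, so the bijection has continuous inverse by the open mapping theorem; in particular, the linear map $B : F \mapsto Z^F - M^F$ from $\RKH(C)$ to $\cFV$ is continuous. Suppose the uniform bound fails: by Remark \ref{rem:increasing_bddness_fail}, there is a nondecreasing $(J^n) \subset \Fin(I)$ along which $K^{J^n}(T) := \int_0^T \norm{\ud A_{J^n}(t)}^2_{\ud C_{J^n J^n}(t)} \to \infty$ monotonically on an event of positive probability. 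Setting $\sigma^n := \inf\{t : K^{J^n}(t) \geq 2n^2\}$ and $F^n := n^{-2} \widetilde{A}^{J^n, \sigma^n} \in \RKH(C)$ (the lift of $A_{J^n}$ stopped at $\sigma^n$, rescaled), one computes $\int_0^T \norm{\ud F^n(t)}^2_{\ud C(t)} \leq 2/n^2$, so $\dnorm{F^n}_{\RKH(C)} \to 0$; on the other hand $B(F^n)(T) = n^{-2} K^{J^n}(T \wedge \sigma^n) \geq 1$ on an event of probability bounded away from zero (using monotone divergence of $K^{J^n}(T)$ and Fatou), forcing $\dnorm{B(F^n)}_{\FV} \not\to 0$ and contradicting continuity of $B$. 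The same stopping-and-scaling scheme, applied with fixed $J$ but stopping the time-integral itself at $2n^2$, handles the local integrability issue for any single $J$. The main obstacle is precisely this last argument: marrying the abstract continuity statement (which requires the completeness of both metric structures) with a sufficiently delicate stopping and rescaling to produce a single explicit contradicting sequence.
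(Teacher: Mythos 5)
Your treatment of $(2)\Rightarrow(1)$ and of the topological isomorphism is essentially the paper's: Proposition \ref{prop:semimart_closure_charact} supplies the inverse, and the two estimates you quote (the bound $\int_0^\cdot \norm{\ud F(t)}^2_{\ud C(t)} \leq [Z,Z]$ from Lemma \ref{lem:mapping_well_def_semimart}, and the Cauchy--Schwarz bound on the drift term) are exactly the ones used. For $(1)\Rightarrow(2)$ your high-level strategy is genuinely different: the paper never invokes the open mapping theorem. Instead it assumes $A \notin \RKH(C)$ and \emph{constructs} an explicit $F \in \RKH(C)$ (as the covariations of a $\cSem$-limit $L$ of local martingales $L^n$) whose putative preimage $Z$ would have to agree with processes $W^n = B^n + L^n$ whose drifts $B^n = \sum_{k\le n} 2^{-k}\log(1+V^k)$ blow up, contradicting surjectivity and injectivity jointly. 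Your open-mapping route is legitimate in principle ($\RKH(C)$ is complete because it is isometric to the $\cSem$-closed set $\Sem(M)$, and $\Sem(P)$ is closed in the complete space $\cSem$), and it would buy a cleaner reduction: one only needs to exhibit a null sequence in $\RKH(C)$ whose image under $F \mapsto Z^F - M^F$ does not go to zero in $\FV$.

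The gap is in that last step, and it is not merely technical. Your localisation is by \emph{stopping times} $\sigma^n = \inf\{t : K^{J^n}(t) \geq 2n^2\}$, where $K^{J^n} = \int_0^\cdot \norm{\ud A_{J^n}(t)}^2_{\ud C_{J^nJ^n}(t)}$. The process $K^{J^n}$, being a nondecreasing supremum of continuous processes, is left-continuous, so $K^{J^n}(\sigma^n) = K^{J^n}(\sigma^n-) \leq 2n^2$; but nothing prevents $K^{J^n}$ from jumping directly to $+\infty$ immediately after $\sigma^n$ (this happens precisely when the rate $\norm{\ud A_J/\ud \clo}^2$ is finite pointwise but fails to be integrable over every right-neighbourhood of some time $\tau$). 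In that scenario $\sigma^n = \tau$ for all large $n$ and $K^{J^n}(T\wedge\sigma^n)$ is stuck at the finite value $K^{J^n}(\tau-)$; your rescaled sequence $F^n = n^{-2}\widetilde{A}^{J^n,\sigma^n}$ then satisfies \emph{both} $\dnorm{F^n}_{\RKH(C)} \to 0$ \emph{and} $B(F^n)(T) = n^{-2}K^{J^n}(T\wedge\sigma^n) \to 0$, and no contradiction is obtained. The same defect afflicts your proposed fix for the single-$J$ local integrability issue, which is exactly the case where this instantaneous blow-up lives. This is why the paper localises with a sequence of \emph{disjoint predictable sets} $(\Pi^n; n\in\bN)$ rather than stochastic intervals $[0,\sigma^n]$: general predictable sets can carve an instantaneous blow-up of $\int \norm{\ud A_{J}}^2_{\ud C_{JJ}}$ into disjoint pieces each of finite but arbitrarily large mass (the sets $\{G_n < \int_\tau^t g\,\ud\clo \leq G_{n+1}\}$, in rate notation), which a stopping time cannot do. To repair your argument you would need to replace $1_{[0,\sigma^n]}$ by indicators of such predictable sets in the definition of $F^n$, at which point you have essentially reconstructed the paper's Step 2.

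One smaller point: in your range-condition argument, a simple predictable $\theta$ cannot in general be chosen to take values in the kernel of $\ud C_{JJ}$ pointwise, since that kernel varies with $(\omega,t)$; you need a general bounded predictable selection (the paper's $\kappa$), and then membership of the resulting integral in $\Sem(P_J)$ uses that, for finite $J$, $\Sem(P_J)$ already contains all vector stochastic integrals, not just simple ones.
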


\begin{proof}
We need  only  establish the implication $(1) \Rightarrow (2)$ and the claim regarding the topological isomorphism between $\Sem(P)$ and $\RKH(C)$; everything else has been discussed prior to the statement of Theorem \ref{thm:struct_rkhs}. Therefore, and for the remainder of this proof, we shall assume that condition (1) holds.

\smallskip
\noindent
\underline{Step 1}:
We claim that, \emph{for every  finite subset $J \in \Fin(I)$, there exists a predictable vector process $\nu^J \equiv (\nu^J_j; \, j \in J)$ such that  $A_j = \int_0^\cdot \inner{\nu^J (t)}{\ud C_{J j} (t)}_{\bR^J}$ is valid for each index  $j \in J$.} To see this, we fix $J \in \Fin(I)$  and write the decomposition 
\[
A_j = \int_0^\cdot \inner{\nu^J (t)}{\ud C_{J j} (t)}_{\bR^J} + B_j, \quad j \in J,
\]
where $B \in \cFV^J$ is singular with respect to $C_{JJ}$. Then, there exists a bounded predictable process $\kappa = (\kappa_j; \, j \in J)$ such that $\int_0^\cdot \inner{\kappa(t)}{\ud B(t)}_{\bR^J} = \int_0^\cdot \norm{\ud B(t)}_{\bR^J}$ and $\int_0^\cdot \inner{\kappa (t)}{\ud C_{J j} (t)}_{\bR^J} \equiv 0$ for $j \in J$. It follows that 
\[
K \dfn \int_0^\cdot \inner{\kappa (t)}{\ud P_J (t)}_{\bR^J} = \int_0^\cdot \norm{\ud B(t)}_{\bR^J} \in \Sem(P_J) \subseteq \Sem(P)
\]
is a finite variation process, so $[K, P_i]  = 0$ holds for all $i \in I$; but the mapping $\Sem(P) \ni Z \mapsto [Z, P_i] \in \RKH(C)$ of \eqref{eq:mapping_sem} is one-to-one by assumption, so $K \equiv 0$; this implies  $B \equiv 0$ and establishes the claim.
	
With the above notation, for each $J \in \Fin(I)$  and with $A_J = (A_j; \, j \in J)$,  we have
\[
\int_0^\cdot \norm{\ud A_J(t) }^2_{\ud C_{JJ} (t) } = \int_0^\cdot \sum_{(i, j) \in J \times J} \nu^J_i(t) \ud C_{ij} (t) \nu^J_j(t).
\]

\smallskip
\noindent
\underline{Step 2}: We need to show $A \in \RKH(C)$, and   will argue   this by contradiction; so we  assume that this condition fails, i.e., that there exists  some  $T \in ( 0, \infty)$, with $\bP \bra{\Gamma} > 0$ for  the event
\[
\Gamma \dfn \set{\int_0^T \norm{\ud A(t) }^2_{\ud C (t) }  = \infty }.
\]
Having made this assumption, let us follow through with some of its implications, until such point as a contradiction is reached. A first implication, is that there exist a nondecreasing sequence $(J^n; \, n \in \bN)$ in $\Fin(I)$, and a  sequence $(\Pi^n; \, n \in \bN)$ of predictable \emph{disjoint} sets, such that, with $\eta^n \dfn \nu^{J^n} 1_{\Pi^n}$, the processes 
\begin{align*}
V^n \dfn \int_0^\cdot \sum_{(i, j) \in J^n \times J^n} \eta^n_i(t) \ud C_{ij} (t) \eta^n_j(t) &= \int_0^\cdot \sum_{(i, j) \in J^n \times J^n} \eta^n_i(t) \ud C_{ij} (t) \nu^{J^n}_j(t) \\
&= \int_0^\cdot \inner{\eta^n(t)}{\ud A_{J^n} (t)}
\end{align*}
are finitely-valued for all $n \in \bN$, but also satisfy $\bP \bra{V^n(T) \leq \exp(2^n) \, | \, \Gamma} \leq 2^{-n-1}$, for all $\quad n \in \bN$. With $\Lambda \dfn \bigcap_{n \in \bN} \set{V^n(T) > \exp(2^n)}$, it   then follows  that $\bP \bra{\Lambda \, | \, \Gamma} \geq 1/2$, which in particular   implies that $\bP \bra{\Lambda} > 0$. Next, we define
\[
Y^n \dfn \int_0^\cdot \inner{\eta^n(t)}{\ud P_{J^n} (t)}_{\bR^{J^n}} = V^n + N^n, \quad n \in \bN,
\]
with $V^n \in \cFV$ as above, we have $N^n \dfn \int_0^\cdot \inner{\eta^n(t)}{\ud M_{J^n} (t)}_{\bR^{J^n}} \in \cMloc$,  and     $Y^n \in \Sem(P)$ as well as $[Y^n, Y^n] = V^n = [N^n, N^n]$,   for every $n \in \bN$. Furthermore, since $(\Pi^n; \, n \in \bN)$ is a  sequence of predictable disjoint sets, we have $[Y^n, Y^m] = 0 = [N^n, N^m]$ whenever $m \neq n$. 

We introduce now  the processes
\[
W^n \dfn  \sum_{k=1}^n \frac{1}{2^k}  \int_0^\cdot \frac{\ud Y^k (t)}{1 + V^k(t)} \in \cSem (P), \quad n \in \bN,
\]
and write $W^n = B^n + L^n$, where $B^n \in \cFV$ and $L^n \in \cMloc$,  for each $n \in \bN$. Since $[Y^k, Y^k] = V^k$ for all $k \in \bN$ and $[Y^n, Y^m] = 0$ whenever $m \neq n$, we get 
\[
\bra{W^n, W^n} = \bra{L^n, L^n} = \sum_{k=1}^n  \int_0^\cdot \pare{\frac{1}{2^k} \frac{1}{1 + V^k(t)} }^2 \ud V^k (t) = \sum_{k=1}^n \frac{1}{4^k} \frac{V^k}{1 + V^k}
\]
 for all $n \in \bN$. Furthermore,   the finite variation part of $W^n$ is $B^n = \sum_{k=1}^n 2^{-k} \log \pare{ 1 + V^k }$, for all $n \in \bN$. Since we have $V^n(T) > \exp(2^n)$ on the set $\Lambda$ with $\bP \bra{\Lambda} > 0$, it follows that $B^n(T) > n$ holds on $\Lambda$, for all $n \in \bN$. On the other hand, 
\[
\bra{L^m - L^n, L^m - L^n} = \sum_{k=n+1}^m \frac{1}{4^k} \frac{V^k}{1 + V^k} \leq \sum_{k=n+1}^\infty \frac{1}{4^k} = \frac{1}{2} \frac{1}{4^n}, \quad \forall \, \,n < m,
\]
gives 
\[
\limsup_{k \to \infty} \sup_{m \geq k, \, n \geq k} \bra{L^m - L^n, L^m - L^n} = 0,
\]
which implies that $L \dfn \cSem$-$\lim_{n \to \infty} L^n$ exists and is a continuous local martingale. From Lemma \ref{lem:mapping_well_def_semimart}, we deduce $F \dfn \pare{[L, R_i]; \, i \in I} \in \RKH (C)$. 

We claim that, under condition (1) in the statement of Theorem \ref{thm:struct_rkhs}, this is impossible; to wit, that  \emph{there cannot exist $Z \in \Sem(P)$ with $F = \pare{[Z, P_i]; \, i \in I}$.} Indeed, if such $Z$ existed, then on account of $[L, P_i] = F_i = [Z, P_i]$, which is valid for every $i \in I$, and with $Z^n \dfn \int_0^\cdot 1_{\bigcup_{k=1}^n \Pi^k} (t) \ud Z(t)$,   $n \in \bN$,
we would have 
\[
\bra{Z^n, P_i} = \int_0^\cdot 1_{\bigcup_{k=1}^n \Pi^k} (t) \ud \bra{Z, P_i} (t) = \int_0^\cdot 1_{\bigcup_{k=1}^n \Pi^k} (t) \ud \bra{L, P_i} (t) = \bra{L^n, P_i},
\]
thus also $\bra{Z^n, P_i} = \bra{W^n, P_i}$, for all $n \in \bN$ and $i \in I$. Since   $Z^n \in \Sem(P)$ and $W^n \in \Sem(P)$  hold  for all $n \in \bN$,  and the mapping \eqref{eq:mapping_sem} is one-to-one, the identity  $Z^n = W^n = B^n + L^n$ would then follow for all $n \in \bN$. But  $B^n(T) > n$ holds on the set $\Lambda$ of positive probability $\bP \bra{\Lambda} > 0$, for all $n \in \bN$, so it is impossible for  the sequence $(Z^n; \, n \in \bN)$  to converge  in $\cSem$; however, by its definition this sequence should converge to $Z$. 

We have reached the desired  contradiction. This shows that the condition $A \in \RKH(C)$ is necessary for the mapping \eqref{eq:mapping_sem} to be a bijection.

\smallskip
\noindent
\underline{Step 3}: Finally, we observe that  under the condition (1), which is equivalent to the structural condition (2) as   just argued, the topology on $\RKH(C)$ is coarser than the topology on $\Sem(P)$. On the other hand, and as we have seen, Proposition \ref{prop:semimart_closure_charact} shows that the inverse of the mapping  \eqref{eq:mapping_sem} is given by $\RKH(C) \ni F \longmapsto \int_0^\cdot \inner{\ud F (t)}{\ud A (t)}_{\ud C (t)} + \int_0^\cdot \inner{\ud F (t)}{\ud M (t)}_{\ud C (t)}  \in \Sem(P)$. Since
\[
\int_0^\cdot \abs{ \inner{\ud F (t)}{\ud A (t)}_{\ud C (t)} } \leq \sqrt{\int_0^\cdot \norm{\ud A(t)}^2_{\ud C(t)}} \sqrt{\int_0^\cdot \norm{\ud F(t)}^2_{\ud C(t)}},
\]
the topology on $\Sem(P)$ is coarser than the topology on $\RKH(C)$. It follows that the spaces $\Sem(P)$ and  $\RKH(C)$ are topologically isomorphic.
\end{proof}

\begin{remark}
Even though it is contained in the proof of Theorem \ref{thm:struct_rkhs} above, we provide here an easy example demonstrating that, when the structural condition $A \in \RKH(C)$ fails, the mapping $\Sem (P) \ni Z \mapsto ([Z, P_i]; \, i \in I) \in \RKH (C)$ may fail to be both one-to-one and onto. Consider the one-dimensional semimartingale $P = A + M$, where $M$ is a standard Brownian motion and $A(t) = 3 t^{1/3} + B(t)$, where $B$ is a deterministic nondecreasing process with $0 = B(0) < B(1)$, singular with respect to Lebesgue measure $\Leb$. Then, $C(t) = t$ for $t \in \bR_+$, and $\RKH(C)$ consists of all $F \equiv \int_0^\cdot f(t) \ud t$ with $\int_0^T |f(t)|^2 \ud t < \infty$, for all $T > 0$. Pick a deterministic $\{ 0, 1\}$-valued process $b$ such that $\Leb [b \neq 0] = 0$ and $\int_0^\cdot b(t) \ud B(t) = B$. Then, $Z \equiv \int_0^\cdot \beta(t) \ud P(t) = B \in \Sem(P)$, but $[Z, P] \equiv 0$. This implies that $\Sem (P) \ni Z \mapsto [Z, P] \in \RKH (C)$ is not one-to-one, since also $[0, P] = 0$, and $B \neq 0$. Furthermore, let $f: \bR_+ \mapsto \bR$ be defined by $f(t) = t^{- 1 /3} \one_{(0, \infty)} (t)$ for $t \in \bR$, and note that $\int_0^\cdot |f(t)|^2 \ud t = \int_0^\cdot t^{-2/3} \ud t$ is finitely-valued, so that $F \equiv \int_0^\cdot f(t) \ud t \in \RKH(C)$. If $Z \in \Sem (P)$ with $[Z, P] = F$ existed, it would have to be $Z = \int_0^\cdot f(t) \ud P(t)$. While $\int_0^\cdot f(t) \ud M(t)$ is well-defined, the putative finite-variation part satisfies $\int_0^T f(t) \ud A(t) \geq \int_0^T t^{-1} \ud t = \infty$ for all $T > 0$, which implies that the mapping $\Sem (P) \ni Z \mapsto [Z, P] \in \RKH (C)$ is not onto.
\end{remark}

\section{Applications to Mathematical Finance}
\label{sec:math_fin}

\subsection{Simple trading and market viability}

For the purposes of Section \ref{sec:math_fin}, $P \equiv (P_i; \, i \in I)$ will be denoting a collection of continuous stochastic processes, with each $P_i$ modelling the price movement of security $i \in I$ in the market, appropriately discounted by a strictly positive num\'{e}raire.

Define $\cXs$ as the class of all \emph{nonnegative} wealth processes of the form
\begin{equation} \label{eq:wealth_simple}
x + \int_0^\cdot \sum_{j \in J} \theta_j(t) \ud P_j (t),
\end{equation}
where $x \in \bR_+$, $J \in \Fin (I)$ and $\theta_J \equiv (\theta_j; j \in J)$ in \eqref{eq:wealth_simple} predictable and \emph{simple}, i.e., consists of a finite number of piecewise constant (in time) parts. As the integrands involved are simple, the stochastic integrals may be defined in the usual pathwise sense. 


Recall that $\FV_\succeq$ denotes the class of all nondecreasing right-continuous processes $K$ with $K(0) = 0$. With the above understanding, define
\begin{equation} \label{eq:heding_value_simple}
\xs (K) \dfn \inf \set{x > 0 \such \exists X \in \cXs \text{ with } X(0) = x \text{ and } X \geq K}, \quad K \in \FV_\succeq.
\end{equation}
In words, $\xs (K)$ is the hedging value of the stream $K \in \FV_\succeq$ upon use of simple trading.

\begin{definition} \label{defn:viable}
We say that the market is \textbf{viable} if
\[
K \in \cK, \quad \xs(K) = 0 \quad \Longrightarrow \quad K \equiv 0.
\]
\end{definition}

Viability states that it is not possible to finance a non-trivial stream $K$, using simple predictable admissible strategies that invest in a finite number of assets,  starting with positive initial capital arbitrarily near zero. It can be shown as in\footnote{Condition NA$_1$ in \cite{MR2732838} involves only ``European contingent claim'' streams of the form $K = g 1_{[T, \infty)}$ for $T > 0$ and $\cF(T)$-measurable $g \geq 0$, but it is straightforward to see that the definitions are equivalent.} \cite[Proposition]{MR2732838} that market viability is equivalent to the requirement that
\begin{equation}
\label{eq:bdd_prob}
\lim_{\ell \to \infty} \sup_{X \in \cXs, \, X (0) = 1} \bP \bra{ X (T) > \ell } = 0, \quad \forall \, T \in \bR_+.
\end{equation}
i.e., that $\set{X (T) \such X \in \cXs \text{ with } X (0) = 1}$ is bounded in $\bP$-measure for all $T \in \bR_+$. 

In particular, \eqref{eq:bdd_prob} implies that the market consisting of assets $(P_i; \, i \in I)$ is viable if, and only if, all markets consisting of assets $(P_i; \, i \in Q)$ are viable, for all $Q \in \Cou(I)$.

\begin{definition} \label{defn:loc_mart_defl}
A process $Y$ will be called a \textbf{local martingale deflator} if $Y > 0$, $Y(0) = 1$ and all processes $Y$ and $Y P \equiv \pare{Y P_i; \, i \in I}$ are local martingales. The class of all such local martingale deflators will be denoted by $\cY$.
\end{definition}

Suppose that $\cY \neq \emptyset$. First, note that every $P_i$ for $i \in I$ is a semimartingale. This follows from It\^{o}'s formula and the product rule, using the fact that we can write $P_i = (1/Y) Y P_i$, and both processes $1/Y$ and $Y P_i$ are semimartingales. Now, pick $Y \in \cY$ and $K \in \FV_\succeq$ with $\xs(K) < \infty$. For $x > \xs(K)$, pick $X \in \cXs$ with $X(0) = x$ and $X \geq K$. Since $X$ is a stochastic integral with respect to a finite number of semimartingale integrators from $P$ Using integration-by-parts, it is straightforward to show that $Y X$ is a local martingale. Now, $\int_0^\cdot Y(t) \ud K(t) - Y K = \int_0^\cdot K(t-) \ud Y(t)$ is also a local martingale, which gives that $Z \dfn Y(X- K) + \int_0^\cdot Y(t) \ud K(t)$ is a nonnegative local martingale, thus a supermartingale, and the supermartingale convergence theorem provides a limit at infinity $Z(\infty) \geq \int_0^\infty Y(t) \ud K(t)$. Then, the optional sampling theorem gives
\[
\expec \bra{\int_0^\infty Y(t) \ud K(t)} \leq \expec [Z(\infty)] \leq Z(0) = Y(0) X(0) = x.
\]
Taking supremum over all $Y \in \cY$ and infimum over all $x > \xs(K)$ in the left- and right-hand sides of the above inequality, respectively, we obtain
\begin{equation} \label{eq:hedging_simple_easy}
\sup_{Y \in \cY} \expec \bra{\int_0^\infty Y(t) \ud K(t)} \leq \xs (K), \quad K \in \FV_\succeq. 
\end{equation}
	
Using a combination of \cite[\S 2.3]{MR2832419} and \cite[Theorem 4]{MR2732838}, the following follows in the case where $I$ has finite cardinality; we show that it is also true for arbitrary index sets $I$. The appellation of structural condition for \eqref{eq:struct_fin} goes at least as back as \cite{MR1353193}.

\begin{theorem}[Fundamental theorem] \label{thm:ftap}
The following statements are equivalent:
\begin{enumerate}
	\item The market is viable in the sense of Definition \ref{defn:viable}.
	\item There exists a local martingale deflator as in Definition \ref{defn:loc_mart_defl}: $\cY \neq \emptyset$.
	\item The component processes of $P \equiv (P_i; \, i \in I)$ are semimartingales. Furthermore, with Doob-Meyer decompositions  $P_i = A_i + M_i$, with $A_i \in \cFV$ and $M_i \in \cMloc$ for $i \in I$, the structural condition $A \equiv (A_i; \, i \in I) \in \RKH (C)$ holds:
\begin{equation} 
\label{eq:struct_fin}
\int_{0}^T \norm{\ud A(t)}^2_{\ud C (t)} < \infty, \quad \forall \ T \in \bR_+.
\end{equation}
\end{enumerate}
\end{theorem}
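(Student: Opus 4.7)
The plan is to establish the cycle of implications $(2) \Rightarrow (1) \Rightarrow (3) \Rightarrow (2)$. The easiest is $(2) \Rightarrow (1)$: from \eqref{eq:hedging_simple_easy}, for any $Y \in \cY$ and $K \in \FV_\succeq$ with $\xs(K) = 0$ we have $\expec \bra{\int_0^\infty Y(t) \ud K(t)} = 0$, and the strict positivity of $Y$ combined with the monotonicity of $K$ forces $K \equiv 0$.

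For $(3) \Rightarrow (2)$, I will build the deflator explicitly using the machinery of Section \ref{sec:stoch_int}. Given $A \in \RKH(C)$, set $N \dfn M^A$ in the notation of \eqref{eq:local_mart_suggestive_nota}; then $N \in \cMloc$, with $\bra{N, M_i} = A_i$ for every $i \in I$ and $\bra{N, N} = \int_0^\cdot \norm{\ud A(t)}^2_{\ud C(t)}$, which is finite-valued. Let $Y \dfn \cE(-N)$, a strictly positive continuous local martingale with $Y(0) = 1$. Integration by parts together with the identity $\ud \bra{Y, P_i} = \ud \bra{Y, M_i} = -Y \ud \bra{N, M_i} = -Y \ud A_i$ yields
\[
\ud (Y P_i) = Y \ud M_i - Y P_i \ud N, \qquad i \in I,
\]
whose right-hand side is a local martingale differential; hence $Y \in \cY$.

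The implication $(1) \Rightarrow (3)$ is the substantive part. For each single $i \in I$, viability of the sub-market $\{P_i\}$ is inherited from that of the full market via \eqref{eq:bdd_prob} applied to $Q = \{i\}$, so by the one-dimensional theory of \cite{MR2732838} each $P_i$ is a semimartingale. I will then argue the structural condition $A \in \RKH(C)$ by contradiction. Supposing $A \notin \RKH(C)$, Lemma \ref{lem:ess_sup_inc} together with Remark \ref{rem:increasing_bddness_fail} yields some $T > 0$ and a countable $Q \in \Cou(I)$ with $\bP \bra{\int_0^T \norm{\ud A_Q(t)}^2_{\ud C_{QQ}(t)} = \infty } > 0$. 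The sub-market indexed by $Q$ remains viable by \eqref{eq:bdd_prob}; by the finite-index version of the theorem (as in \cite{MR2732838}), each finite $J \subseteq Q$ satisfies $A_J \in \RKH(C_{JJ})$, with a representing predictable process $\nu^J$ as in Step 1 of the proof of Theorem \ref{thm:struct_rkhs}. Running the construction of Step 2 of that proof along a nondecreasing sequence $J^n \uparrow Q$ and disjoint predictable sets $\Pi^n$ produces processes $W^n \in \Sem(P_{J^n})$ whose finite-variation parts $B^n$ satisfy $B^n(T) > n$ on a fixed event $\Lambda$ with $\bP\bra{\Lambda} > 0$, while the quadratic variations of their local-martingale parts $L^n$ stay uniformly bounded by $1/2$.

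The main obstacle will be converting these semimartingales $W^n$ into admissible, nonnegative wealth processes generated by \emph{simple} strategies, so as to contradict the bounded-in-probability characterization \eqref{eq:bdd_prob}. My candidate is $X^n \dfn \cE(W^n)$, a strictly positive process with $X^n(0) = 1$ whose logarithm $B^n - \tfrac{1}{2}\bra{L^n, L^n} + L^n$ satisfies $X^n(T) \geq \exp(n - 1/4 + L^n(T))$ on $\Lambda$; Chebyshev's inequality applied to $L^n(T)$ then gives $\bP\bra{X^n(T) > \exp(n/2)} \geq \bP\bra{\Lambda}/2$ for all $n$ sufficiently large. Since $W^n$ is driven only by finitely many of the $P_i$, a localization and truncation argument allows approximation of $X^n$ in probability, uniformly on $[0,T]$, by wealth processes in $\cXs$ built from simple predictable integrands that remain nonnegative; this transfers the escape to infinity to the class $\cXs$ and contradicts \eqref{eq:bdd_prob}. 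This final approximation is where the technical care will be concentrated.
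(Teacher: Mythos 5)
Your proof is correct, and the implications $(2) \Rightarrow (1)$ and $(3) \Rightarrow (2)$ coincide with the paper's (the paper verifies that $\cE(-M^A)$ deflates $P$ via the Yor formula in exponential coordinates, you via integration by parts on $Y P_i$ directly; both computations are fine). The genuine divergence is in $(1) \Rightarrow (3)$. The paper takes $Z^n \in \Sem(P_{J^n})$ replicating the full drift on $J^n$, so that $Z^n = 2G^n + L^n$ with $\bra{L^n, L^n} = 2G^n$ exploding together with the drift; to show $W^n = \cE(Z^n)$ escapes to infinity it must then control the martingale part \emph{relative to} its own diverging quadratic variation, which is done via the Dambis--Dubins--Schwarz representation and the strong law of large numbers for Brownian motion. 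You instead import the damped construction from Step~2 of the proof of Theorem \ref{thm:struct_rkhs}: the disjoint predictable sets $\Pi^k$ and the weights $2^{-k}(1+V^k)^{-1}$ make the finite-variation parts $B^n$ grow linearly in $n$ on $\Lambda$ while keeping $\bra{L^n, L^n} \leq \sum_k 4^{-k} < 1/2$ uniformly, so that a plain Chebyshev bound on the $L^2$-bounded martingales $L^n(T)$ replaces the time-change argument. This buys a more elementary quantitative step at the cost of leaning on the carving-out of the disjoint predictable sets $(\Pi^n)$ and the conditional estimates $\bP\bra{V^n(T) \leq \exp(2^n) \such \Gamma} \leq 2^{-n-1}$, whose details the paper itself only sketches inside Theorem \ref{thm:struct_rkhs}; the prerequisite $A_J \in \RKH(C_{JJ})$ for finite $J$, needed to define the $\nu^J$ there, is correctly supplied in your setting by viability of finite sub-markets. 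The final passage from $X^n = \cE(W^n) \in \cX$ to simple nonnegative strategies in $\cXs$ is handled at the same level of detail as the paper's corresponding step (uniform-on-compacts approximation in probability, with a stopping-time truncation to preserve nonnegativity), so no gap arises there either.
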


\begin{proof}
We start with the implication $(3) \Rightarrow (2)$. Under the assumptions of statement (3), and in view of Theorem \ref{thm:struct_rkhs} and of \eqref{eq:FV2C}--\eqref{eq:stoch_rkhs_approx}, there exists  $Z \in \Sem(P)$ such that $[Z, P_i] = A_i$ holds for all $i \in I$. Write  $Z = B + L$ for appropriate $B \in \cFV$ and $L \in \cMloc$, and introduce the  strictly positive local martingale\footnote{Here and until the end of Section \ref{sec:math_fin}, we use ``$\cE (\cdot)$'' to denote stochastic exponential.} $Y = \cE(- L)$ with $Y(0) = 1$. Since
\[
\cE(-L) \cE(P_i) = \cE(-L + P_i - [L, P_i]) = \cE(-L + M_i), \quad i \in I,
\]
as follows from the Yor formula \cite[Exercise IV.(3.11)]{MR1725357} and the fact that $[L, P_i] = [Z, P_i] = A_i$ holds  for all $i \in I$, we deduce that $Y S_i = S_i(0) \cE(-L) \cE(R_i)$ is a local martingale, again for every  $i \in I$, showing that $Y \in \cY$.

For the implication  $(2) \Rightarrow (1)$, assume (2), let $Y \in \cY$, and pick $K \in \FV_\succeq$ such that $\xs(K) = 0$. Then, $\expecp \bra{\int_0^\infty Y(t)  \ud K(t)} = 0$ follows from \eqref{eq:hedging_simple_easy}, implying that, $\bP$-a.e., $\int_0^\infty Y(t) \ud K(t) = 0$.  Since $Y$ is strictly positive and $K \in \FV_\succeq$, it holds that $K \equiv 0$. Market viability follows.

Finally, we broach    the implication $(1) \Rightarrow (3)$. The viability of the entire market  implies, in particular, that every  sub-market with a finite number of assets is viable in the sense of Definition \ref{defn:viable}. In view of \cite[\S 2.3]{MR2832419}, each $P_i$, $i \in I$, is a semimartingale. Then, using the fact that stochastic integrals of continuous semimartingales can be approximated in probability uniformly on compact time intervals, implies that condition \eqref{eq:bdd_prob} also holds when $\cXs$ is replaced by the family $\cX$ consisting of all nonnegative stochastic integrals using a finite number of integrators from $P$. Therefore,  [Theorem 4]\cite{MR2732838} allows us to deduce  that $A_J \in \RKH(C_{JJ})$ holds for every $J \in \Fin(I)$. Suppose now, by way of contradiction, that $A \notin \RKH(C)$. In this case, and in view of Remark \ref{rem:increasing_bddness_fail}, there exist a real number $T > 0$ and an increasing sequence $(J^n; \, n \in \bN)$ in $\Fin(I)$, such that $\bP \bra{\lim_{n \to \infty} G^n(T) = \infty} > 0$ holds with
\[
G^n \dfn \frac{1}{2} \int_{0}^\cdot \norm{\ud A_{J^n} (t)}^2_{\ud C_{J^n J^n} (t)}, \quad n \in \bN.
\] 
For each $n \in \bN$, Theorem \ref{thm:struct_rkhs} gives us again a process $Z^n \in \Sem(P_{J^n})$ with     $[Z^n, R_j] = A_j$ for all $j \in J^n$, and we note that $Z^n = 2 G^n + L^n$ holds for $L^n = \int_0^\cdot \inner{\ud A_{J^n} (t)}{\ud M_{J^n} (t)}_{\ud C_{J^n J^n} (t)} \in \cMloc$ with $[L^n, L^n] = 2 G^n$.  Then for the strictly positive continuous semimartingale
\[
W^n \dfn \cE (Z^n) \in \Sem(P_{J^n}),
\]	
there exists a process $X^n \in \cXs$ with the property $\bP \bra{X^n(T) \leq W^n(T) - 1} \leq 1/n$.

We claim that the resulting sequence $(X^n(T) ; \, n \in \bN)$ fails to be bounded in $\bP$-measure, contradicting \eqref{eq:bdd_prob} and, therefore, the fact that the market is viable.  To prove this claim, it is enough to show that $(W^n(T) ; \, n \in \bN)$ fails to be bounded in $\bP$-measure. Indeed, we note
\[
\log W^n = \log \cE (Z^n) = Z^n - \frac{1}{2} [Z^n, Z^n] = G^n + L^n, \quad n \in \bN 
\]
and recall that $[L^n, L^n] = 2 G^n$ holds for all $n \in \bN$. The Dambis-Dubins-Schwarz representation (e.g., \cite[Theorem 3.4.6 and Problem 3.4.7]{MR1121940}), combined with the scaling property of Brownian motion, imply that for every $n \in \bN$ there exists a Brownian motion $\beta^n$, on a possibly enlarged filtered probability space, such that $\log W^n = G^n  + \sqrt{2} \beta^n (G^n)$. The strong law of large numbers for Brownian motion gives
\[
\lim_{n \to \infty} \bP \bra{ \frac{\beta^n (G^n (T)) }{ G^n(T) } \leq - \frac{1}{2 \sqrt{2}}, \quad \lim_{m \to \infty} G^m(T) = \infty} = 0.
\]
Since $\log W^n = G^n  + \sqrt{2} \beta^n (G^n)$ holds for all $n \in \bN$, we obtain  
\[
\lim_{n \to \infty} \bP \bra{ \frac{\log W^n (T)}{ G^n(T) }  \leq \frac{1}{2}, \quad \lim_{m \to \infty} G^m(T) = \infty } = 0,
\]
in turn implying that
\[
\lim_{n \to \infty} \bP \bra{\log W^n(T) > \ell \ \big| \  \lim_{m \to \infty} G^m(T) = \infty} = 1
\]
holds for all $\ell \in \bN$, This  shows that $(W^n(T); \, n \in \bN)$ fails to be bounded in $\bP$-measure, and completes the argument.
\end{proof}

\subsection{Structure of local martingale deflators}

In the proof of implication $(3) \Rightarrow (2)$ in Theorem \ref{thm:ftap}, a specific local martingale deflator was constructed. To recapitulate and set some notation to be used below, with $M^A \in \Sem(M) \subseteq \cMloc$ such that $[M^A, M_i] = A_i$, for all $i \in I$, we have $\cE(- M^A) \in \cY$. The next result gives the structure of all local martingale deflators in a viable market, and is well-known in the case where $I$ has finite cardinality; see for example, \cite[Theorem 1]{MR1353193}, for the corresponding decomposition of densities of equivalent local martingale measures (the generalisation to local martingale deflators for finite-asset markets is straightforward).

\begin{proposition} \label{prop:mulp_decomp_local_mart_defl}
Suppose that the market is viable, and let $M^A = \int_0^\cdot \inner{\ud A(t)}{\ud M(t)}_{\ud C(t)}$ be the unique continuous local martingale in $\Sem(M)$ with $[M^A, M_i] = A_i$, $i \in I$. Then, the collection $\cY$ of local martingale deflators contains   exactly those  processes $Y$ of the form
\[
Y = \cE(- M^A) \cE (L) = \cE(-M^A + L),
\]
where $L \in \Mloc$ is such that $L(0) = 0$, $\Delta L > -1$, and $[L, M_i] = 0$ for all $i \in I$.
\end{proposition}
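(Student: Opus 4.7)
The plan is to prove both implications separately. For the easy (``if'') direction, fix $L \in \Mloc$ with $L(0) = 0$, $\Delta L > -1$, and $[L, M_i] = 0$ for all $i \in I$, and set $Y \dfn \cE(-M^A + L)$. Continuity of $M^A$ gives $\Delta(-M^A + L) = \Delta L > -1$, so $Y$ is a strictly positive local martingale with $Y(0) = 1$. Integration by parts yields
\[
\ud (Y P_i) = Y_- \ud P_i + P_{i,-} \ud Y + \ud [Y, P_i] = P_{i,-} \ud Y + Y_- \big( \ud A_i + \ud [-M^A + L, M_i] \big) + Y_- \ud M_i.
\]
Since $[-M^A + L, M_i] = -[M^A, M_i] + [L, M_i] = -A_i + 0$, the finite variation term vanishes and $Y P_i \in \Mloc$ for every $i \in I$; hence $Y \in \cY$.

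For the converse, fix $Y \in \cY$. Since $Y$ is a strictly positive local martingale with $Y(0) = 1$, we may write $Y = \cE(N)$ for the unique $N \in \Mloc$ with $N(0) = 0$ and $\Delta N > -1$, namely $N \dfn \int_0^\cdot (1 / Y_-(t)) \ud Y(t)$. The key identity is $[N, M_i] = -A_i$ for every $i \in I$: indeed, repeating the integration-by-parts computation with $\ud Y = Y_- \ud N$ shows that $Y P_i \in \Mloc$ is equivalent to $Y_- \ud A_i + Y_- \ud [N, M_i] = 0$, and strict positivity of $Y_-$ gives $[N, M_i] = -A_i$.

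Set $L \dfn N + M^A$. Then $L \in \Mloc$, $L(0) = 0$, and $\Delta L = \Delta N > -1$ since $M^A$ is continuous. Moreover $[L, M_i] = [N, M_i] + [M^A, M_i] = -A_i + A_i = 0$ for every $i \in I$, giving all the required properties of $L$.

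It remains to combine the stochastic exponentials via Yor's formula: $\cE(-M^A)\cE(L) = \cE(-M^A + L - [M^A, L])$, so the equality $Y = \cE(-M^A + L) = \cE(-M^A)\cE(L)$ hinges on $[M^A, L] = 0$. This is the step that requires the most care, and is where the structural theory of Section \ref{sec:stoch_int} enters: $M^A \in \Sem(M)$ is a $\cSem$-limit of stochastic integrals $M^n$ with respect to finite subfamilies of $M$, and for each such $M^n$ the identity $[L, M^n] = 0$ follows termwise from $[L, M_i] = 0$. Passing to the limit is handled by the Kunita-Watanabe inequality
\[
\int_0^T \abs{\ud [L, M^A - M^n](t)} \leq \sqrt{[L,L](T)} \sqrt{[M^A - M^n, M^A - M^n](T)},
\]
whose right-hand side tends to $0$ in probability, yielding $[M^A, L] = 0$ and completing the proof. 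The main obstacle is precisely this last passage to the limit; the rest is a sequence of direct product-rule computations.
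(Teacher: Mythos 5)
Your proof is correct and follows essentially the same route as the paper's: both reduce the problem to the stochastic logarithm $N$ of $Y$, identify the deflator property with the drift condition $[N, M_i] = -A_i$ via Yor's formula/integration by parts, and then split off the part orthogonal to all $M_i$. The one place you are more explicit than the paper is the verification that $[M^A, L] = 0$ (needed for $\cE(-M^A+L) = \cE(-M^A)\cE(L)$) via approximation of $M^A$ by finite stochastic integrals and the Kunita--Watanabe inequality; the paper uses this fact silently when writing $\cE(M^F+L) = \cE(M^F)\cE(L)$, so your extra care is welcome rather than a deviation.
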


\begin{proof}
We cast an arbitrary strictly positive local martingale $Y$ in the form  $Y = \cE(M^F + L) = \cE(M^F) \cE(L) ,$  where $F \in \RKH (C)$ and where $L$ is a local martingale with  $L(0) = 0$, $\Delta L > -1$, and $[L, M_i] = 0$  for all $i \in I$. From the Yor formula,  we obtain
\[
Y S_i = \cE(M^F + L) S_i(0)  \cE(R_i) = S_i(0)  \cE (M^F + L + M_i + A_i + F_i), \quad i \in I,
\]
since $[L, R_i] \equiv 0$ and $[M^F, R_i] = F_i$, thus also 
\[
M^F + L + R_i + [M^F + L, R_i] = M^F + L + R_i + F_i = M^F + L + M_i + A_i + F_i. 
\]
We deduce that $Y S^i$ is a local martingale for all $i \in I$ if, and only if, $A_i + F_i = 0$ holds for all $i \in I$, i.e.,   $F = -A$, concluding the argument.
\end{proof}

\subsection{General wealth-consumption processes}
\label{subsec:gen_wealth_cons}

Assume that the market is viable. Given the semimartingale property of $P$ in Theorem \ref{thm:ftap}, one may define wealth processes using general stochastic integrals. Define $\cX$ as the set of all \emph{nonnegative} processes $x + Z$, where $x \in \bR_+$ and $Z \in \Sem(P)$. Since market viability is equivalent to the structural condition $A \in \RKH(C)$, Proposition \ref{prop:semimart_closure_charact} implies that $\cX$ coincides with all nonnegative processes of the form $x + \int_0^\cdot \inner{\ud F(t)}{\ud P(t)}_{\ud C(t)} = x + \int_0^\cdot \inner{\ud F(t)}{\ud A(t)}_{\ud C(t)}  + \int_0^\cdot \inner{\ud F(t)}{\ud P(t)}_{\ud M(t)} $, where $x \in \bR_+$ and $F \in \RKH(C)$. In fact, for future reference let us define
\begin{equation} \label{eq:wealth_cons_extended}
X^{x, F, K} \dfn x + \int_0^\cdot \inner{\ud F(t)}{\ud P(t)}_{\ud C(t)} - K, \qquad x \in \bR, \ F \in \RKH(C), \ K \in \FV_\succeq. 
\end{equation} 
with the interpretation of a wealth process where $K$ is an aggregate capital withdrawal stream.
We simply write $X^{x, F}$ for $X^{x, F, 0}$, i.e., when no capital withdrawal process is present. With this notation, $\cX = \set{X^{x, F} \geq 0 \  | \  x \in \bR_+, \, F \in \RKH(C)}$.

In the present financial setting, a remark on the interpretation of the integrands in $\RKH (C)$ is in order. While the predictable process $\theta$ in \eqref{eq:wealth_simple} denotes positions held in each of the assets, the components of an integrand $F = (F_i; \, i \in I) \in \RKH(C)$ in \eqref{eq:wealth_cons_extended} carry the interpretation of aggregate covariations of the resulting wealth process with the individual assets. One may argue that, as an input, covariations are as natural as (or even more appropriate than) positions: one typically cares about the sensitivity of investment with respect to asset price movements, and this is exactly what integrands in $\RKH (C)$ encode. 

\subsection{Optional decomposition}

Having a flavour of a ``uniform'' (over local martingale measures, or local martingale deflators, as here) Doob-Meyer decomposition , the optional decomposition theorem has been vital in the development of Mathematical Finance, especially in the context of the hedging duality, taken up later on in \S \ref{subsec:hedging}. The earliest contribution dealing with a finite numbers of It\^{o}-process  integrators is \cite{MR1311659}, later generalised in \cite{MR1402653}, \cite{MR1469917}, for general semimartingale integrators. The paper \cite{MR1651229} deals with local martingale deflators, instead of local martingale measures.

We shall present an infinite-asset generalisation in Theorem \ref{thm:odt}. We begin with a relatively simple observation.

\begin{lemma} \label{lem:numeraire_discount}
Assume that the market is viable. For any $x \in \bR$ and $F \in \RKH (C)$, we have $\cE(- M^A) X^{x, F} = x + M^H$ for some $H \in \RKH (C)$. 
\end{lemma}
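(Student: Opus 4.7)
The plan is to apply integration-by-parts to $Y X^{x,F}$, where $Y = \cE(-M^A)$, show that the drift-type contributions cancel exactly, and identify the resulting continuous local martingale via the bijection of \S\ref{subsec:isometry_loc_marts}.

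First I would decompose $X^{x,F} = x + D + M^F$, where $D \dfn \int_0^\cdot \inner{\ud F(t)}{\ud A(t)}_{\ud C(t)} \in \cFV$ is the finite-variation part and $M^F \dfn \int_0^\cdot \inner{\ud F(t)}{\ud M(t)}_{\ud C(t)} \in \Sem(M)$ is the continuous local-martingale part, in the notation of \eqref{eq:local_mart_suggestive_nota}. Since $Y$ solves $\ud Y = -Y \, \ud M^A$ and $D$ is continuous with finite variation, $[Y, X^{x,F}] = [Y, M^F]$; and because $[M^A, M^F] = \int_0^\cdot \inner{\ud A(t)}{\ud F(t)}_{\ud C(t)}$ (this is just polarisation applied to the isometry $[M^G, M^G] = \int_0^\cdot \norm{\ud G(t)}^2_{\ud C(t)}$ of \eqref{eq:isometry_gen}), it follows that $[Y, X^{x,F}] = -\int_0^\cdot Y(t) \inner{\ud A(t)}{\ud F(t)}_{\ud C(t)}$. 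Integration-by-parts then yields
\[
Y X^{x,F} = x + \int_0^\cdot Y(t) \inner{\ud F(t)}{\ud A(t)}_{\ud C(t)} + \int_0^\cdot Y(t) \, \ud M^F(t) + \int_0^\cdot X^{x,F}(t) \, \ud Y(t) + [Y, X^{x,F}],
\]
where the second term cancels exactly with $[Y, X^{x,F}]$, leaving
\[
Y X^{x,F} = x + N, \qquad N \dfn \int_0^\cdot Y(t) \, \ud M^F(t) + \int_0^\cdot X^{x,F}(t) \, \ud Y(t).
\]

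Because both $Y$ and $X^{x,F}$ are continuous, they are locally bounded predictable, so $N$ is a well-defined continuous local martingale. I would next verify that $N \in \Sem(M)$. Choose a nondecreasing sequence $(J^n; \, n \in \bN)$ in $\Fin(I)$ that realises the approximation \eqref{eq:stoch_rkhs_approx} simultaneously for $F$ and $A$, and set $M^{F,n} \dfn \int_0^\cdot \inner{\ud F_{J^n}(t)}{\ud M_{J^n}(t)}_{\ud C_{J^n J^n}(t)}$, $M^{A,n} \dfn \int_0^\cdot \inner{\ud A_{J^n}(t)}{\ud M_{J^n}(t)}_{\ud C_{J^n J^n}(t)}$, and $Y^n \dfn \cE(-M^{A,n})$. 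Then $\int Y^n \, \ud M^{F,n}$ and $\int X^{x,F} \, \ud Y^n = -\int X^{x,F} Y^n \, \ud M^{A,n}$ each lie in $\Sem(M_{J^n}) \subseteq \Sem(M)$. By the isometry \eqref{eq:isometry_gen}, $\cSem$-$\lim_n M^{F,n} = M^F$ and $\cSem$-$\lim_n M^{A,n} = M^A$; continuity of the stochastic exponential in $\cSem$ gives $\cSem$-$\lim_n Y^n = Y$, and continuity of stochastic integration against locally bounded integrands in $\cSem$-topology then identifies the $\cSem$-limit of the sum as $N$, so $N \in \Sem(M)$.

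Finally, Lemma \ref{lem:mapping_well_def_semimart} yields $H \dfn ([N, P_i]; \, i \in I) \in \RKH(C)$; since $N \in \Sem(M)$ and $[N, P_i] = [N, M_i] = H_i$ by continuity, the bijection established in \S\ref{subsec:isometry_loc_marts} forces $N = M^H$, giving $Y X^{x,F} = x + M^H$ as required. The only nontrivial step is ensuring the membership $N \in \Sem(M)$ so that the bijection applies; the rest is a routine integration-by-parts calculation combined with the polarised isometry.
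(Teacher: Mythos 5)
Your proof is correct and follows essentially the same route as the paper's: integration by parts for $\cE(-M^A) X^{x,F}$, cancellation of the drift term $\int \cE(-M^A)(t)\inner{\ud F(t)}{\ud A(t)}_{\ud C(t)}$ against the covariation $[\cE(-M^A), M^F] = -\int \cE(-M^A)(t)\,\ud[M^A,M^F](t)$, leaving the local martingale $\int \cE(-M^A)(t)\,\ud M^F(t) - \int X^{x,F}(t)\cE(-M^A)(t)\,\ud M^A(t)$, which must then be recognised as an element of $\Sem(M)$. The only difference is that you spell out the membership $N \in \Sem(M)$ by a finite-index approximation, whereas the paper invokes it directly (stochastic integrals of locally bounded continuous integrands against elements of the $\cSem$-closed space $\Sem(M)$ remain in $\Sem(M)$); both are fine.
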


\begin{proof}
Since $\cE(- M^A) - 1 = - \int_0^\cdot \cE(- M^A) (t) \ud M^A(t) \in \Sem(M)$ and $\Sem(M)$ is isometric to $\RKH(C)$, it suffices to show that $\cE(- M^A) X^{0, F} \in \Sem(M)$. Since $[M^A, X^{0, F}] = [M^A, M^F] = \int_0^\cdot \inner{\ud A(t)}{\ud F(t)}_{\ud C(t)}$, we obtain
\[
\bra{\cE(- M^A), X^{0, F}} = - \int_0^\cdot \cE(- M^A)(t) \ud \bra{M^A, M^F} (t),
\]
and integration-by-parts gives
\begin{align*}
\cE(- M^A) X^{0, F} = - \int_0^\cdot X^{0, F} (t) \cE(- M^A) (t) \ud M^A(t) + \int_0^\cdot \cE(- M^A)(t) \ud M^F(t), 
\end{align*}
which shows that, indeed $\cE(- M^A) X^{0, F} \in \Sem(M)$.
\end{proof}

Let $Y \in \cY$, and write $Y = \cE(- M^A) \cE(L)$ as in Proposition \ref{prop:mulp_decomp_local_mart_defl}. Then, according to Lemma \ref{lem:numeraire_discount}, for any $x \in \bR$ and $F \in \RKH (C)$ we have $Y X^{x, F} = \cE(L) (x + M^H)$ for some $H \in \RKH(C)$, and since $[L, M^H] = 0$ we obtain that $Y X^{x, F} \in \Mloc$. Since the process
\[
Y K = \int_0^\cdot K(t-) \ud Y(t) + \int_0^\cdot Y(t) \ud K(t)
\]
is clearly a local submartingale for all $K \in \FV_\succeq$, we further obtain that $Y X^{x, F, K}$ is a local supermartingale for all $x \in \bR$, $F \in \RKH (C)$ and $K \in \FV_\succeq$. Furthermore, given $Y \in \cY$, $x \in \bR$, $F \in \RKH (C)$ and $K \in \FV_\succeq$, $Y X^{x, F, K}$ is a local martingale if and only if $K \equiv 0$.

\begin{theorem}[Optional decomposition] \label{thm:odt}
Suppose that $\cY \neq \emptyset$, and let $X$ be a nonnegative stochastic process with $X(0) = x \in \bR_+$. Then, the following statements are equivalent:
\begin{enumerate}
	\item $Y X$ is a local supermartingale for all $Y \in \cY$.
	\item It holds that $X = X^{x, F, K}$, for some $F \in \RKH(C)$ and $K \in \FV_\succeq$. 
\end{enumerate}
Under any of the equivalent conditions above, $F = ([X, P_i]; i \in I) \in \RKH (C)$.
\end{theorem}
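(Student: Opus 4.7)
My plan for this direction is to piggyback on identities already available in the excerpt. Given $X = X^{x,F,K}$ and any $Y \in \cY$, Proposition \ref{prop:mulp_decomp_local_mart_defl} factors $Y = \cE(-M^A)\cE(L)$ for some local martingale $L$ with $L(0)=0$, $\Delta L > -1$, and $[L, M_i] \equiv 0$ for all $i \in I$. Lemma \ref{lem:numeraire_discount} then writes $\cE(-M^A) X^{x,F} = x + M^H$ for some $H \in \RKH(C)$; since $M^H \in \Sem(M)$ and $L$ is orthogonal to each $M_i$ (so $[L, M^H] = 0$ by approximating $M^H$ via finite sums of stochastic integrals against the $M_i$'s), integration by parts gives $YX^{x,F} = (x+M^H)\cE(L) \in \Mloc$. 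As $Y > 0$ and $K$ is nondecreasing, $YK$ is a local submartingale, and thus $YX = YX^{x,F} - YK$ is a local supermartingale, establishing (1).

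\textbf{Setup for $(1) \Rightarrow (2)$.} I will start by fixing $Y^* \dfn \cE(-M^A) \in \cY$ (the existence of any deflator forces $A \in \RKH(C)$ via the Fundamental Theorem \ref{thm:ftap}). Condition (1) makes $V \dfn Y^* X$ a nonnegative local supermartingale with $V(0)=x$, so Doob--Meyer yields a decomposition $V = x + N - D$ with $N \in \Mloc$, $N(0)=0$, and $D$ right-continuous adapted nondecreasing with $D(0)=0$. Setting $H_i \dfn [N, M_i]$, Lemma \ref{lem:mapping_well_def_semimart} places $H \in \RKH(C)$; I will denote by $M^H \in \Sem(M)$ the associated local martingale, so $[M^H, M_i] = H_i$ for every $i \in I$, and introduce the orthogonal remainder $N^\perp \dfn N - M^H$, which satisfies $[N^\perp, M_i] = 0$ for all $i \in I$.

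\textbf{The main obstacle: forcing $N^\perp \equiv 0$.} To extract the semimartingale structure demanded by (2), I must exploit (1) against the full family $\cY$, not just against $Y^*$. For every local martingale $L$ with $L(0)=0$, $\Delta L > -1$, and $[L, M_i] \equiv 0$ for all $i \in I$, Proposition \ref{prop:mulp_decomp_local_mart_defl} produces $Y^* \cE(L) \in \cY$, so (1) forces $V \cE(L)$ to be a local supermartingale. Using $[M^H, L] = 0$, integration by parts gives, modulo local-martingale increments,
\begin{equation*}
d\bigl( V \cE(L) \bigr) \equiv \cE(L)_- \bigl( d[N^\perp, L] - dD \bigr),
\end{equation*}
so the supermartingale requirement reduces to $[N^\perp, L] \preceq D$ for every such $L$. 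Applying this to $L = \phi \cdot N^\perp$ for bounded predictable $\phi$ (with a localization ensuring $\Delta L > -1$ and $L \in \Mloc$) converts the condition to $\phi \cdot [N^\perp, N^\perp] \preceq D$, and scaling $\phi$ forces $[N^\perp, N^\perp] \equiv 0$ and hence $N^\perp \equiv 0$. This is the hard step of the proof: the scaling is unconstrained for the continuous part of $N^\perp$ (where $\Delta L = 0$), but the purely-discontinuous part requires a more delicate stopping-time argument to produce enough admissible orthogonal perturbations while respecting $\Delta L > -1$.

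\textbf{Extraction of $F$ and $K$.} Once $N = M^H$, we have $V = x + M^H - D$, and integration by parts on $X = V/Y^*$ --- using $dY^*/Y^* = -dM^A$ and $d[Y^*, X] = -Y^*\, d[M^A, X]$ --- produces
\begin{equation*}
dX = X \, dM^A + \frac{1}{Y^*}\, dM^H + d[M^A, X] - \frac{1}{Y^*}\, dD.
\end{equation*}
Defining $F_i \dfn [X, P_i]$ (which equals $[X, M_i]$ as $A_i$ is continuous of finite variation), the display above yields $dF_i = X\, dA_i + (1/Y^*)\, dH_i$, confirming the parenthetical final claim $F = ([X, P_i]; \, i \in I)$. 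Local boundedness of $X$ and $1/Y^*$, combined with $A, H \in \RKH(C)$, gives $F \in \RKH(C)$ via a Cauchy--Schwarz bound on $\int \norm{\ud F}^2_{\ud C}$. By bilinearity of the rkHs inner product, $d[M^A, X] = \inner{\ud F}{\ud A}_{\ud C}$ and $X\, dM^A + (1/Y^*)\, dM^H = \inner{\ud F}{\ud M}_{\ud C}$, so setting $K \dfn \int_0^\cdot (1/Y^*)\, dD \in \FV_\succeq$ the display rewrites as $dX = \inner{\ud F}{\ud A}_{\ud C} + \inner{\ud F}{\ud M}_{\ud C} - dK$, i.e., $X = X^{x, F, K}$, completing the proof.
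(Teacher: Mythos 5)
Your direction $(2)\Rightarrow(1)$ and your final extraction of $F$ and $K$ are sound, and your overall strategy (pass to $V = \cE(-M^A)X$, decompose, and test condition (1) against deflators $\cE(-M^A)\cE(L)$ with $L$ orthogonal to every $M_i$) is close in spirit to the paper's. The genuine gap is in your ``main obstacle'' step: the conclusion $N^\perp \equiv 0$ that you set out to force is \emph{false} in general, and no refinement of the stopping-time argument can rescue it. The Doob--Meyer decomposition $V = x + N - D$ yields a \emph{predictable} nondecreasing $D$; if $N^\perp$ vanished, the withdrawal process $K$ you extract at the end would itself be predictable, i.e.\ you would be proving a predictable decomposition theorem, which fails in incomplete markets. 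Concretely: take a single asset $P_1 = W_1$, a Brownian motion (so $A=0$ and $Y^* = 1$), on a filtration also supporting an independent Poisson process with first jump time $\tau$, and let $K = 1_{[\tau, \infty)}$, an optional but non-predictable element of $\FV_\succeq$ with $[K, W_1]=0$. Then $X = x - K$ (for $x \geq 1$) satisfies condition (1), since every $Y \in \cY$ is of the form $\cE(L)$ with $[L, W_1]=0$ and $YK$ is a local submartingale; yet the martingale part of $X$ is $-(K - \widetilde K)$, with $\widetilde K$ the predictable compensator of $K$: a nonzero, purely discontinuous local martingale orthogonal to $W_1$. Your scaling argument does correctly kill the \emph{continuous} part of $N^\perp$ (this is exactly the paper's step of testing against $\cE(-mN)$ and letting $m \to \infty$), but for the purely discontinuous part the constraint $\Delta L = \phi\, \Delta N^\perp > -1$ caps the admissible scaling --- and rightly so.

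What must be proved instead is that the purely discontinuous orthogonal martingale part is \emph{absorbed into} the optional nondecreasing process: one shows that $D - N^\perp$ is nondecreasing, not that $N^\perp = 0$. The paper organises this slightly differently --- it sets $F = ([X,P_i];\, i \in I)$ at the outset, produces $Z \in \Sem(P)$ with matching covariations via Theorem \ref{thm:struct_rkhs}, and reduces to showing that $B = \int_0^\cdot \cE(-M^A)(t)\,\ud K(t)$ with $K = Z + x - X$ is nondecreasing --- but the crux is the same: after eliminating the continuous orthogonal part, one is left with a purely discontinuous local submartingale $B$ satisfying $[L,B]=0$ for all $L \in \cMloc$ and such that $\cE(L)B$ remains a local submartingale for all purely discontinuous orthogonal $L$ with $\Delta L > -1$, and one must invoke \cite[Lemma 2.1]{MR3384117} (or an equivalent jump-measure argument) to conclude that such a $B$ is nondecreasing. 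That lemma is the missing ingredient in your proof; without it the purely discontinuous case is not merely ``more delicate'', it is unproved, and the intermediate claim you would need is wrong. Once ``$N^\perp \equiv 0$'' is replaced by ``$D - N^\perp \in \FV_\succeq$'', your extraction step goes through verbatim with $D$ replaced by $D - N^\perp$, since $[N^\perp, M^A] = 0$.
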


\begin{proof}
Implication $(2) \Rightarrow (1)$ was already discussed just before the statement of Theorem \ref{thm:odt}. The proof of implication $(1) \Rightarrow (2)$ mostly follows the development of \cite{MR3384117}, but we provide a somewhat different argument, briefly explaining the steps.

Assume condition (1), and set $F = ([X, P_i]; i \in I) \in \cFV^I$. By Proposition \ref{prop:semimart_closure_charact}, $F \in \RKH (C)$; then, since $\cY \neq \emptyset$ implies to $A \in \RKH(C)$ by Theorem \ref{thm:ftap}, we obtain from Theorem \ref{thm:struct_rkhs} the existence of a unique $Z \in \Sem(P)$ with $[Z, P_i] = F_i$, for all $i \in I$. Define the locally bounded from above process $K \dfn Z + x - X$. It then follows that $[K, M_i] = [K, P_i] = [Z, P_i] - [X, P_i] = F_i - F_i = 0$ holds for all $i \in I$. Furthermore, statement (1) and the discussion after Lemma \ref{lem:numeraire_discount} imply that $Y K$ is a local submartingale for all $Y \in \cY$. We need to show that $K \in \FV_{\succeq}$. Equivalently, and upon defining
\begin{equation} \label{eq:B_from_K_odt_proof}
B \dfn \int_0^\cdot \cE(- M^A)(t) \ud K(t) = \cE(- M^A) K + \int_0^\cdot K(t-) \cE(- M^A)(t -) \ud M^A(t),
\end{equation}
where the fact that $[K, M^A] = 0$ was used to obtain the right-hand-side equality, we need to show that $B  \in \FV_{\succeq}$.

Since $K$ is a local submartingale with $[K, M_i] = 0$ for all $i \in I$, it follows that $B$ is a local submartingale with $[B, M_i] = 0$ for all $i \in I$. In particular, if $N \in \cMloc$ denotes the uniquely-defined continuous local martingale part of $B$, we have $[N, M_i] = 0$ for all $i \in I$, and $B - N$ is local submartingale which is a purely discontinuous, in the sense that $[L, B - N] = 0$ holds for all $L \in \cMloc$. Since $Y K$ is a local submartingale for all $Y \in \cY$, integration-by-parts and \eqref{eq:B_from_K_odt_proof}, using that $\cE( - m N) \cE (- M^A) \in \cY$ and that $[N, M_i] = 0$ for all $i \in I$, gives that the process $\cE(- m N) B$ is a local submartingale for all $m \in \bN$. Again, using integration-by-parts and the fact that $[B, N] = [N, N]$, we obtain
\[
\cE(-m N) B = - m \int_0^\cdot \cE ( - m N) (t-) B(t-) \ud N(t) + \int_0^\cdot \cE (- m N) (t-) \ud \pare{B(t) - m [N, N](t)}
\]
Since above process is local submartingale for all $m \in \bN$, it follows that $B - m [N, N]$ is local supermartingale for all $m \in \bN$, which is only possible if $[N, N] = 0$, i.e., if $N = 0$. Therefore, it follows that $[L, B] = 0$ for all $L \in \cMloc$, and  $L B$ is a local submartingale for all purely discontinuous local martingales $L$ with $\Delta L > -1$. Directly applying \cite[Lemma 2.1]{MR3384117}, we obtain that $B$ has to actually be nondecreasing, i.e., $B \in \FV_{\succeq}$, which completes the argument.
\end{proof}

\begin{remark} \label{rem:odt_loc_mart}
Suppose that $\cY \neq \emptyset$, and let $X$ be a stochastic process with $X(0) = x \in \bR$ that is locally bounded from below. Then, the following statements are equivalent:
\begin{enumerate}
\item $Y X$ is a local martingale for all $Y \in \cY$.
\item It holds that $X = X^{x, F}$, for some $F \in \RKH(C)$.
\end{enumerate}
Indeed, implication $(2) \Rightarrow (1)$ was discussed before the statement of Theorem \ref{thm:odt}. Assuming condition (1), and since local martingales that are locally integrable from below are local supermartingales, Theorem \ref{thm:odt} gives that $X = X^{x, F, K}$ holds for some $F \in \RKH(C)$ and $K \in \FV_\succeq$. Again, condition (1) and the discussion before the statement of Theorem \ref{thm:odt} implies that $K = 0$, and condition (2) follows.
\end{remark}



\subsection{Hedging}
\label{subsec:hedging}

We assume that the market is viable, which allows use of wealth-consumption processes as explained in \S \ref{subsec:gen_wealth_cons}.

A wealth-consumption process $X \equiv X^{x, F, G}$, for $x \in  \bR_+$, $F \in \RKH(C)$ and $G \in \FV_\succeq$ is said to \textbf{hedge} for a given $K \in \FV_\succeq$ if $X \geq K$ holds; furthermore, such $X$ will be called a \textbf{minimal hedge for $K$} if $X \leq Z$ holds whenever $Z$ is any other hedge for $K$. If $X^{x, F, G}$, for $x \in  \bR_+$, $F \in \RKH(C)$ and $G \in \FV_\succeq$ is a hedge for $K \in \FV_\succeq$, then the ``pure wealth'' process $X^{x, F}$ is also certainly a hedge; however, the minimal hedge for $K$ may also involve capital withdrawal.

With the above understanding, and in accordance with \eqref{eq:heding_value_simple}, define the \textbf{hedging value}
\begin{equation} \label{eq:hedging_value}
x (K) \dfn \inf \set{x > 0 \such \exists F \in \RKH(C) \text{ with } X^{x, F} \geq K}, \quad K \in \FV_\succeq.
\end{equation}
Since $\cXs \subseteq \cX$, $x(K) \leq \xs (K)$ holds for all $K \in \FV_\succeq$. As in the proof of \eqref{eq:hedging_simple_easy},
\begin{equation} \label{eq:hedging_easy}
\sup_{Y \in \cY} \expec \bra{\int_0^\infty Y(t) \ud K(t)} \leq x (K), \quad K \in \FV_\succeq 
\end{equation}
holds. In particular, $x(K) = 0$ for $K \in \FV_\succeq$ implies $K \equiv 0$, which is a seemingly stronger condition to market viability. It follows that $x(K) = 0$ if and only if $\xs(K) = 0$; however, it is straightforward to construct examples of $K \in \FV_\succeq$ where the strict inequality $x(K) < \xs(K)$ is valid.

The next auxiliary result can be seen as a ``dynamic'' version of \eqref{eq:hedging_value}.

\begin{lemma}
Let $X$ be any hedge of $K \in \FV_\succeq$. Then, it holds that
\begin{equation} \label{eq:hedging_easy_dynamic}
K(s) + \esssup_{Y \in \cY} \expec \bra{ \int_{(s, \infty)} \frac{Y(t)}{Y(s)} \ud K(t) \ \Big| \ \cF(s)} \leq X(s), \quad s \in \bR_+.
\end{equation}
\end{lemma}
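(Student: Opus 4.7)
My plan is to show, for each fixed $Y \in \cY$, that the process $Y(X-K) + \int_0^\cdot Y(t) \ud K(t)$ is a nonnegative supermartingale; the claimed inequality then falls out of the supermartingale property at time $s$, after dividing by $Y(s) > 0$ and taking the essential supremum over $Y \in \cY$.

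Since $X = X^{x, F, G}$ is nonnegative and $Y \in \cY$, the process $YX$ is a nonnegative local supermartingale by the discussion preceding Theorem \ref{thm:odt}, hence a genuine supermartingale (nonnegative local supermartingales are supermartingales by Fatou). Integration-by-parts for the semimartingale $Y$ against the finite-variation process $K$ (noting that $K$ has no continuous martingale part, so $[Y, K] = \sum_{u \leq \cdot} \Delta Y(u) \Delta K(u)$, absorbed via $Y = Y_- + \Delta Y$) gives
\[
Y(t) K(t) = \int_0^t K(u-) \ud Y(u) + \int_0^t Y(u) \ud K(u).
\]
Subtracting this from $YX$ and rearranging produces the key identity
\[
Y (X - K) + \int_0^\cdot Y(u) \ud K(u) \ = \ YX - \int_0^\cdot K(u-) \ud Y(u).
\]
The right-hand side is a local supermartingale, since $\int K_- \ud Y$ is a local martingale (because $K_-$ is locally bounded for $K \in \FV_\succeq$), while the left-hand side is nonnegative because $X \geq K$, $Y > 0$, and $\ud K \geq 0$. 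A nonnegative local supermartingale is a true supermartingale, so the process in question is one.

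Applying the supermartingale inequality between $s$ and an arbitrary $t \geq s$, discarding the nonnegative term $\expec[Y(t)(X(t) - K(t)) \,|\, \cF(s)]$, and letting $t \to \infty$ by conditional monotone convergence (which is legitimate since $Y \geq 0$ and $\ud K \geq 0$) yields
\[
\expec \bra{ \int_{(s, \infty)} Y(u) \ud K(u) \, \Big| \, \cF(s) } \ \leq \ Y(s) \bigl( X(s) - K(s) \bigr).
\]
Since $Y(s)$ is strictly positive and $\cF(s)$-measurable, dividing both sides by $Y(s)$ pulls $1/Y(s)$ inside the conditional expectation; moving $K(s)$ to the left and taking the essential supremum over $Y \in \cY$ delivers the stated inequality. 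I expect the only real bookkeeping to be in the integration-by-parts step when both $Y$ and $K$ carry jumps, but this is entirely standard once one notes that $K$ being of finite variation forces $[Y, K]$ to reduce to the pure jump sum.
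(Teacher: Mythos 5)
Your argument is correct and follows essentially the same route as the paper's proof: both rest on the integration-by-parts identity $YK = \int_0^\cdot K(t-)\,\ud Y(t) + \int_0^\cdot Y(t)\,\ud K(t)$, the local martingale property of $\int_0^\cdot K(t-)\,\ud Y(t)$, the (local) supermartingale property of $YX$, a localisation/monotone-convergence passage to $t \to \infty$, and division by $Y(s) > 0$ before taking the essential supremum. The only cosmetic difference is that you package these ingredients into the explicit nonnegative supermartingale $Y(X-K) + \int_0^\cdot Y(t)\,\ud K(t)$, whereas the paper applies the same estimates directly at a stopping time $T \geq s$; the content is identical.
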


\begin{proof}
Fix $s \in \bR_+$. For any $Y \in \cY$ and any stopping time $T \geq s$,
\[
Y(T) X(T) \geq Y(T) K(T) = Y(s) K(s) + \int_s^T Y(t) \ud K(t) + \int_s^T K(t-) \ud Y(t).
\]
A standard localisation argument applied to the local martingale $\int_0^\cdot K(t-) \ud Y(t)$ combined with the monotone convergence theorem, using also the supermartingale property of $Y X$, gives
\[
Y(s) X(s) \geq Y(s) K(s) + \expec \bra{\int_s^\infty Y(t) \ud K(t) \ \Big| \ \cF(s)}.
\]
Upon dividing with $Y(s)$ throughout in the last equality, and then taking essential supremum over all $Y \in \cY$, \eqref{eq:hedging_easy_dynamic} follows.
\end{proof}

The next result implies in particular that the inequality in \eqref{eq:hedging_easy} is an actual equality.

\begin{theorem}[Hedging duality] \label{thm:hedging}
Assume that the market is viable in the sense of Definition \ref{defn:viable}. Then, it holds that
\begin{equation} \label{eq:hedging_full}
x(K) = \sup_{Y \in \cY} \expec \bra{ \int_0^\infty Y(t) \ud K(t)}, \quad K \in \FV_\succeq.
\end{equation}
Furthermore, if $x(K) < \infty$ for $K \in \FV_\succeq$, then there exists a minimal $X$ hedge for $K$ with $X(0) = x(K)$; for this minimal hedge $X$ it holds that
\begin{equation} \label{eq:minimal_hedge}
X(s) = K(s) + \esssup_{Y \in \cY} \expec \bra{ \int_{(s, \infty)} \frac{Y(t)}{Y(s)} \ud K(t) \ \Big| \ \cF(s)}, \quad s \in \bR_+.
\end{equation}
\end{theorem}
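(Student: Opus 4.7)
The plan is to construct explicitly a candidate minimal hedge $X$, verify that $YX$ is a local supermartingale for every $Y \in \cY$, and then invoke the optional decomposition Theorem \ref{thm:odt} to extract both the duality \eqref{eq:hedging_full} and the minimality claim simultaneously. The inequality $\sup_{Y \in \cY} \expecp[\int_0^\infty Y(t) \ud K(t)] \leq x(K)$ is already in \eqref{eq:hedging_easy}, so writing $x^* \dfn \sup_{Y \in \cY} \expecp[\int_0^\infty Y(t) \ud K(t)]$, it suffices to treat the case $x^* < \infty$. Accordingly, I would introduce the process
\[
V(s) \dfn \esssup_{Y \in \cY} \expecp \left[ \int_{(s, \infty)} \frac{Y(t)}{Y(s)} \ud K(t) \, \bigg| \, \cF(s) \right], \qquad X(s) \dfn K(s) + V(s),
\]
observing $X(0) = V(0) = x^*$ and $X \geq K \geq 0$, so that $X$ is the natural candidate for the minimal hedge.

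The crucial structural input is a pasting property for $\cY$: for $Y_0, Y_1 \in \cY$ and any stopping time $\tau$, the switched process $Y_0 \cdot 1_{[0,\tau]} + (Y_0(\tau)/Y_1(\tau)) \, Y_1 \cdot 1_{(\tau, \infty)}$ is again in $\cY$. This follows by checking that the paste is strictly positive, continuous at $\tau$, and a local martingale both before $\tau$ (where it equals $Y_0$) and after $\tau$ (where it is an $\cF(\tau)$-measurable constant multiple of $Y_1$); the analogous check for the product with each $P_i$ uses that $Y_0 P_i$ and $Y_1 P_i$ are local martingales. The pasting implies that the family $\{\expecp[\int_{(s,\infty)} (Y(t)/Y(s)) \ud K(t) \mid \cF(s)] : Y \in \cY\}$ is upward directed for each fixed $s$, so its essential supremum is attained as the limit of an increasing sequence and commutes with conditional expectation.

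Combining the pasting with the integration-by-parts identity $\expecp[Y(t)K(t) - Y(s)K(s) \mid \cF(s)] = \expecp[\int_{(s,t]} Y(u) \ud K(u) \mid \cF(s)]$ (valid after standard localisation of the local martingale $\int_0^\cdot K(u-) \ud Y(u)$) produces the Bellman-type bound $\expecp[Y(t) X(t) \mid \cF(s)] \leq Y(s) X(s)$ for every $s \leq t$ and every $Y \in \cY$; a c\`adl\`ag modification of $X$ is then supplied by Meyer's classical regularisation argument. At this stage Theorem \ref{thm:odt} applies and yields $F \in \RKH(C)$, $G \in \FV_\succeq$ with $X = X^{x^*, F, G}$; since $X^{x^*, F} = X + G \geq X \geq K$, the pure wealth process $X^{x^*, F}$ hedges $K$, so $x(K) \leq x^*$, which together with \eqref{eq:hedging_easy} proves \eqref{eq:hedging_full}. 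Minimality then falls out of \eqref{eq:hedging_easy_dynamic}: any hedge $\widetilde X$ of $K$ satisfies $\widetilde X(s) \geq K(s) + V(s) = X(s)$ for every $s \in \bR_+$, confirming both that $X$ is the minimal hedge and the representation \eqref{eq:minimal_hedge}.

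I expect the main obstacle to lie in orchestrating the essential-supremum interchange and the path regularisation of $X$: the pasting itself is a short verification from Proposition \ref{prop:mulp_decomp_local_mart_defl}, but combining it cleanly with conditional expectation and the Meyer-type c\`adl\`ag modification demands careful measure-theoretic bookkeeping. Once these steps are secured, Theorem \ref{thm:odt} and \eqref{eq:hedging_easy_dynamic} deliver both conclusions of the theorem essentially automatically.
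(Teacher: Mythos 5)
Your proposal is correct and follows essentially the same route as the paper: define the candidate $X = K + V$ with $V$ the essential supremum in \eqref{eq:minimal_hedge}, show $YX$ is a supermartingale for every $Y \in \cY$, regularise paths, apply the optional decomposition Theorem \ref{thm:odt}, and conclude via \eqref{eq:hedging_easy} and \eqref{eq:hedging_easy_dynamic}. The only difference is that you spell out the pasting/upward-directedness argument and the Bellman-type bound, which the paper delegates to a citation of \cite[Proposition 4.3]{MR1402653}.
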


\begin{proof}
Given the validity of the optional decomposition theorem \ref{thm:odt}, the proof of this result is standard, and we only sketch it. Let $z(K)$ be the quantity on the right-hand-side of \eqref{eq:hedging_full}, so that $z(K) \leq x(K)$ holds for $K \in \FV_\succeq$ by \eqref{eq:hedging_easy}. If $z(K) = \infty$, \eqref{eq:hedging_full} is trivially satisfied. Assume then that $z(K) < \infty$, and define $Z(s)$ for $s \in \bR_+$ as the right-hand-side of \eqref{eq:minimal_hedge}. Note that $Z(0) = z(K)$. One may show, for example following the arguments in \cite[Proposition 4.3]{MR1402653} (replacing local martingale densities there with local martingale deflators) that, for every $Y \in \cY$, $Y Z$ is a supermartingale  with right-continuous expectation; in particular, since $\cY \neq \emptyset$, $Z$ admits a right-continuous modification, which we still denote by $Z$. Then, according to the optional decomposition theorem \ref{thm:odt}, there exist $F \in \RKH(C)$ and $G \in \FV_\succeq$ such that $X^{z(K), F, G} = Z \geq K$. In particular, $x(K) \leq z(K)$, which implies that $x(K) = z(K)$. More generally, $Z$ is a hedge for $K$ and, since any hedge of $K$ has to be larger than $Z$ by \eqref{eq:hedging_easy_dynamic}, we obtain that $Z$ is a minimal hedge, and \eqref{eq:minimal_hedge} follows.
\end{proof}

\subsection{Completeness}
We are assuming throughout that the market is viable. We interpret a pair $(T, g)$, where $T \in \bR_+$ and $g \in \lzp (\cF(T))$ (i.e., $g$ is a nonnegative $\cF(T)$-measurable random variable as a \emph{European contingent claim}, where the \emph{payoff} $g$ is to be collected at \emph{maturity} $T$. Any such pair $(T, g)$ may be identified with the liability stream $g 1_{[T, \infty)}$, which makes $x (g 1_{[T, \infty)})$ its \emph{hedging capital}. More precisely, as a corollary of Theorem \ref{thm:hedging}, we have
\[
x (g 1_{[T, \infty)}) = \sup_{Y \in \cY} \expec \bra{Y(T) g};
\]
furthermore, if $x \equiv x (g 1_{[T, \infty)}) < \infty$, there exists a minimal hedge, i.e., a wealth-consumption process $X \equiv X^{x, F, G}$ such that $X(T) \geq g$, which is also minimal in having this property.

\smallskip

As part of the next definition, a given wealth process $X \in \cX$ will be called \textbf{maximal at $T \in \bR_+$} if, whenever $Z \in \cX$ is such that $Z(0) = X(0)$ and $X(T) \leq Z(T)$, it actually holds that $X(T) = Z(T)$.

\begin{definition} \label{defn:complete}
A viable market will be called \textbf{complete} if, whenever $T \in \bR_+$ and $g \in \lzp (\cF(T))$ are such that $x \pare{g 1_{[T, \infty)}} < \infty$, then there exists $X \in \cX$ that is maximal at $T$ and such that $X(T) = g$.
\end{definition}

The maximality in Definition \ref{defn:complete} is there to avoid use of suicide strategies for replication of contingent claims. It is the equivalent of asking that wealth processes replicating bounded contingent claims should be bounded, that appears in ``classical'' definitions of completeness.

The next result, important enough to usually go by the appellation ``second fundamental theorem'', goes at least as back as \cite{MR622165}.

\begin{theorem} \label{thm:ftap2}
Assume that the market is viable. Then, the market is complete if, and only if, there exists a unique local martingale deflator.
\end{theorem}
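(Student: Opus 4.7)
The plan is to argue the two directions separately, relying heavily on the hedging duality (Theorem \ref{thm:hedging}), the optional decomposition (Theorem \ref{thm:odt}), and the structural decomposition of $\cY$ (Proposition \ref{prop:mulp_decomp_local_mart_defl}).

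For $(\Leftarrow)$, assume $\cY = \{Y\}$ is a singleton and fix $(T, g)$ with $x(g 1_{[T, \infty)}) < \infty$. Theorem \ref{thm:hedging} produces a minimal hedge $V = X^{V(0), F, G}$ with $V(0) = \bE[Y(T) g]$; the essential supremum in \eqref{eq:minimal_hedge} collapses to a single conditional expectation, yielding $Y(s) V(s) = \bE[Y(T) g \mid \cF(s)]$ for $s \in [0, T]$, so $YV$ is a uniformly integrable martingale on $[0, T]$. Decomposing $YV = YX^{V(0), F} - YG$, where $YX^{V(0), F}$ is a local martingale and $YG$ equals a local martingale plus the nondecreasing process $\int_0^\cdot Y(t) dG(t)$ (using $Y > 0$ and $G \in \FV_\succeq$), the martingale property of $YV$ on $[0, T]$ forces $G \equiv 0$ there. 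Thus $V$ coincides with $X^{V(0), F} \in \cX$ on $[0, T]$ with $V(T) = g$; extending $V$ to remain constant at $g$ beyond $T$ (a valid trade) produces $X \in \cX$ with $X(T) = g$. Maximality at $T$ follows from the supermartingale inequality: for $Z \in \cX$ with $Z(0) = X(0)$ and $Z(T) \geq g$, one has $\bE[Y(T) Z(T)] \leq Z(0) = \bE[Y(T) g]$, and strict positivity of $Y(T)$ forces $Z(T) = g$ a.s.

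For $(\Rightarrow)$, the plan is to establish the following \emph{key lemma}: if $X \in \cX$ is maximal at $T$, then $\bE[Y(T) X(T)] = X(0)$, equivalently $YX$ is a true martingale on $[0, T]$, for every $Y \in \cY$. Granted this, fix $Y_1, Y_2 \in \cY$ and $A \in \cF(T)$; completeness applied to $g = 1_A$ yields $X \in \cX$ maximal at $T$ with $X(T) = 1_A$, whence $\bE[Y_1(T) 1_A] = X(0) = \bE[Y_2(T) 1_A]$. Since $A$ is arbitrary, $Y_1(T) = Y_2(T)$ a.s.; varying $T$ and invoking right-continuity of local martingales gives $Y_1 = Y_2$.

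To prove the key lemma, I would argue by contradiction: suppose $\bE[Y(T) X(T)] < X(0)$ for some $Y \in \cY$. Let $V$ be the minimal hedge of $X(T) 1_{[T, \infty)}$ from Theorem \ref{thm:hedging}, with $V(0) = \sup_{Y' \in \cY} \bE[Y'(T) X(T)] \leq X(0)$. An absorbed-consumption argument shows that maximality of $X$ forces both $V(0) = X(0)$ and $G_V \equiv 0$ on $[0, T]$: were the consumption $G_V$ nontrivial or the gap $X(0) - V(0)$ positive, the pure wealth process $V + G_V$, shifted by $X(0) - V(0)$, would lie in $\cX$, start at $X(0)$, and strictly dominate $X$ at time $T$ on a set of positive probability, contradicting maximality. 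With $V \in \cX$ on $[0, T]$ and $V(T) = X(T)$, I would then exploit Proposition \ref{prop:mulp_decomp_local_mart_defl} to deform $Y$ along directions $\cE(\lambda L)$ for orthogonal local martingales $L$ with $[L, M_i] = 0$, compute first-order variations of $\lambda \mapsto \bE[Y_\lambda(T) X(T)]$, and use the resulting strict monotonicity together with the supremum bound $\sup_\lambda \bE[Y_\lambda(T) X(T)] \leq X(0)$ to produce a wealth process in $\cX$ strictly dominating $X$ at time $T$, yielding the contradiction.

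The main obstacle is executing this final contradiction cleanly: the supremum $\sup_{Y' \in \cY} \bE[Y'(T) X(T)]$ need not be attained by the specific $Y$ of interest, only in the limit, and the classical Harrison--Pliska \cite{MR622165} proof relies on boundedness of replicating wealth processes and uniform integrability under each candidate martingale measure, neither of which is automatic in our infinite-asset, local-martingale-deflator setting. One must combine the structural characterization of $\cY$ from Proposition \ref{prop:mulp_decomp_local_mart_defl} with the optional decomposition (Theorem \ref{thm:odt}) in order to handle strict local martingales and to ensure that the competing wealth processes constructed remain in $\cX$ rather than in the larger wealth-consumption class.
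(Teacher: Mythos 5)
Your $(\Leftarrow)$ direction is essentially the paper's argument: the essential supremum in \eqref{eq:minimal_hedge} collapses to $\bE[Y(T)g \mid \cF(s)]/Y(s)$, the consumption part of the minimal hedge is forced to vanish (the paper invokes Remark \ref{rem:odt_loc_mart} where you argue directly through the increasing part of $YG$, but this is the same observation), and maximality follows from the supermartingale inequality together with $Y(T)>0$. That half is fine.

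The $(\Rightarrow)$ direction, however, has a genuine gap, and it sits exactly where you flag it. Your entire reduction rests on the \emph{key lemma} that a process $X\in\cX$ maximal at $T$ satisfies $\bE[Y(T)X(T)]=X(0)$ for \emph{every} $Y\in\cY$. What your absorbed-consumption argument actually delivers (after identifying $X$ with the minimal hedge of $X(T)1_{[T,\infty)}$) is only $\sup_{Y\in\cY}\bE[Y(T)X(T)]=X(0)$; passing from the supremum to every individual $Y$ is not a technicality. In the finite-asset theory the corresponding statement is known to fail: a maximal admissible wealth process is a uniformly integrable martingale under \emph{some} element of the dual set, and under a dense family of them, but in general not under all of them, even when the terminal value is bounded (boundedness of $X(T)=1_A$ does not bound the path of $X$). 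The perturbation argument you sketch (deforming $Y$ along $\cE(\lambda L)$ and differentiating in $\lambda$) is precisely the step that cannot be carried out without control on the integrability of $X$ under the perturbed deflators, and you correctly identify that neither boundedness nor uniform integrability is available.

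The paper sidesteps this entirely by a different choice of test claim. Given two distinct deflators, Proposition \ref{prop:mulp_decomp_local_mart_defl} produces $L\in\Mloc$ with $[L,M_i]=0$ for all $i$, $\bP[L(T)=0]<1$, and (after stopping) $|L|\le 1/2$; one then replicates $g=(1/2+L(T))/\cE(-M^A)(T)$ rather than an indicator. The normalisation by $1/\cE(-M^A)(T)$ is the whole point: the estimate \eqref{eq:complete_help} shows via Theorem \ref{thm:odt} that the replicating portfolio satisfies $X\le 1/\cE(-M^A)$, so $N\dfn\cE(-M^A)X$ is a \emph{bounded} nonnegative local martingale on $[0,T]$, hence a true martingale --- no appeal to any universal martingale property of maximal processes is needed. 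Since $N=x+M^H$ with $H\in\RKH(C)$ by Lemma \ref{lem:numeraire_discount} and $N(T)=1/2+L(T)$ with $L$ a bounded martingale, one gets $N=1/2+L$, whence $[L,L]=[M^H,L]=0$ and $L\equiv 0$, a contradiction. If you want to keep your structure, you would need to replace $g=1_A$ by a claim deflated by $\cE(-M^A)(T)$ in this way, or find an independent proof of your key lemma restricted to the specific maximal processes that completeness provides; as written, the lemma is unproved and, in the generality you state it, false.
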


\begin{proof}
Assume first that there exists a unique local martingale deflator: $\cY = \set{Y}$. Let $T \in \bR_+$ and $g \in \lzp (\cF(T))$ be such that $x \equiv x (g 1_{[T, \infty)}) < \infty$. By Theorem \ref{thm:hedging}, $x = \expec \bra{ Y(T) g}$. Define a nonnegative martingale $N$ via $N(t) = \expec \bra{ Y(T) g \such  \cF(t)}$, for all $t \in [0, T]$. Since $\cY = \set{Y}$, it follows from Theorem \ref{thm:hedging} that the minimal hedge $X$ associated with $g 1_{[T, \infty)}$ satisfies $Y X = N$. Since $\cY = \set{Y}$ and $N$ is a (local) martingale, Remark \ref{rem:odt_loc_mart} implies that $X \in \cX$. We claim also that $X$ is maximal: indeed, if $Z \in \cX$ satisfies $Z(0) = x = X(0)$ and $X(T) \leq Z(T)$, then $\expec \bra{Y(T) Z(T)} \leq Y(0) Z(0) = x = \expec \bra{Y(T) X(T)}$, which combined with $Y(T) > 0$ gives $X(T) = Z(T)$. Since $T \in \bR_+$ and $g \in \lzp (\cF(T))$ with $x (g 1_{[T, \infty)}) < \infty$ are arbitrary, market completeness follows.

Assume now that the market is (viable and) complete. By way of contradiction, suppose that there is more than one local martingale deflators. In view of Proposition \ref{prop:mulp_decomp_local_mart_defl}, there exists $T > 0$ and $L \in \Mloc$ with $\bP \bra{L(T) = 0} < 1$ and $\bra{L, M_i} = 0$, for all $i \in I$. It is a straightforward to check that we may additionally assume that $L$ satisfies $|L| \leq 1/2$. Define $g \in \lzp (\cF(T))$ via $g \dfn \pare{1/2 + L(T)} / \cE(-M^A) (T)$. Note that
\begin{equation} \label{eq:complete_help}
\expec \bra{\frac{Y(T)}{Y(s)} g \ \Big| \ \cF(s)} \leq \frac{1}{Y(s)} \expec \bra{ \frac{Y(T)}{\cE(-M^A) (T)}  \ \Big| \ \cF(s)} \leq \frac{1}{\cE(-M^A) (s)}, \quad \forall \, s \in [0, T], \quad \forall \, Y \in \cY,
\end{equation}
as follows from Proposition \ref{prop:mulp_decomp_local_mart_defl}, since $Y/ \cE(-M^A)$ is a nonnegative local martingale. In particular, $x \equiv x(g 1_{[T, \infty)}) \leq 1 < \infty$. Let $X \in \cX$ be a maximal in $[0, T]$ process such that $X(T) = g$. Furthermore, let $Z$ be the minimal hedge for $g 1_{[T, \infty)}$. Since $Z(0) = x \leq X(0)$ holds by definition of the hedging value and Theorem \ref{thm:odt}, and well as $Z(T) = g = X(T)$, maximality of $X$ implies that $X = Z$, i.e., $X$ is necessarily the maximal hedge of $g 1_{[T, \infty)}$. Using \eqref{eq:complete_help}, Theorem \eqref{thm:odt} implies that $X \leq 1 / \cE(- M^A)$. Set $N = \cE(- M^A) X$, and note that $N(T) = 1/2 + L(T)$, and that $N$ is a nonnegative bounded local martingale on $[0, T]$, i.e., an actual martingale. Additionally, in view of Lemma \ref{lem:numeraire_discount}, $N = x + M^H$ holds for $H \in \RKH(C)$. Since $L$ is also a martingale, it follows that $N (t) = 1/2 + L (t)$ holds for all $t \in [0, T]$. This implies $\bra{L, L} = \bra{M^H, L}  \equiv 0$, which  leads to $L \equiv 0$, reaching a contradiction. We conclude that the implication $(1) \Rightarrow (2)$ is valid.
\end{proof}

Consider a viable and complete market. During the proof of Theorem \ref{thm:ftap2}, it was established that the minimal hedge for European contingent claim involves no capital withdrawal. In fact, this is also the case for the minimal hedge associated with any $K \in \FV_\succeq$  such that
\[
x(K) = \bE  \left[ \int_0^\infty Y(t) \ud K(t) \right] < \infty,	
\]
where $Y$ is the unique local martingale deflator, on account of Theorem \ref{thm:ftap2}. Indeed, by \eqref{eq:minimal_hedge}, the process $X$ which minimally hedges $K$ satisfies
\begin{equation} \label{eq:complete_min_financing}
Y(s) X(s) = Y(s) K (s) + \bE \bra{ \int_{(s, \infty)} Y(t) \ud K(t) \ \Big| \ \cF(s ) }, \quad s \in \bR_+.
\end{equation}
Since the process $Y K - \int_0^\cdot Y(t) \ud K(t)  Y K = \int_0^\cdot K(t-) \ud Y(t) $ is a local martingale, it follows from \eqref{eq:complete_min_financing} that $Y X$ is also a local martingale. In view of Remark \ref{rem:odt_loc_mart}, the process $X$, which minimally hedges $K$, does so without any capital withdrawals.

\subsection{An example: Heath-Jarrow-Morton model}
\label{subsec:HJM}

We shall take now the index set $I$ to be  the nonnegative real line $\bR_+$, corresponding to all possible  \emph{maturities} for zero-coupon bonds,  instruments that pay off a single unit of currency at maturity. We shall illustrate within the context of such markets, the theory we have developed thus far. We start by  placing ourselves  in the Heath-Jarrow-Morton framework for the  prices of zero-coupon bonds.  To illustrate this background, let us denote by $\widetilde{P}(t; T)$ the  price at time $t \in \bR_+$ of a zero-coupon bond with maturity $T > t$. The idea is to model explicitly   the  evolution of \emph{forward rates}, which are formally obtained from bond prices via 
\[
f(t; T) = \frac{\partial \log \widetilde{P}}{\partial T} (t; T), \qquad 0 \leq t \leq T < \infty.
\]
In particular, $r(t) \equiv f(t; t)$ for $t \in \bR_+$ stands for the \emph{instantaneous short rate} 
over the infinitesimal interval $(t, t+ \ud t]$. Therefore, setting by convention $f(s; t) = f(t; t) = r(t)$ whenever $0 \leq t \leq s < \infty$, discounted zero-coupon bond prices should equal
\begin{align*}
P(t; T) &= \exp \pare{- \int_0^t r(u) \ud u } \widetilde{P} (t; T) \\
&= \exp \pare{- \int_0^t r(u) \ud u } \exp \pare{- \int_t^T f(t; u) \ud u } \\
&= \exp \pare{- \int_0^T f(t; u) \ud u}, \quad t \in \bR_+.
\end{align*}
We note that the above definition extends the ``life'' of bond prices $P(t; T)$ even when $t > T$, and that $P(t; T) = P(T; T)$ holds in this case. There is no cause for practical concern:  investing in the model's $T$-bond represented by prices $P(\cdot; T)$ after time $T$ will not result in any outcome (as   should be the case,  since   in reality the bond ceases to exist after its maturity).

Some formal definitions are necessary. Consider a collection  $W \equiv (W_\lambda; \, \lambda \in \Lambda)$    of independent Brownian motions, where  the  index set $\Lambda$ is at most countable. We recall  the stochastic rkHs setting of Remark \ref{rem:Brownian_rkHs}, which is tailor-made to fit  countable collections of independent Brownian motions as the ones used here. In accordance with this setting, we denote by $\ell^2 \equiv \ell^2_\Lambda$ the Hilbert space consisting of all sequences $y = (y_\lambda; \lambda \in \Lambda)$ with the property $\sum_{\lambda \in \Lambda} |y_\lambda|^2 < \infty$, and an inner product $\inner{\cdot}{\cdot}_{\ell^2}$ defined via 
\[
\inner{y}{z}_{\ell^2} = \sum_{\lambda \in \Lambda} y_\ell z_\ell, \qquad y = (y_\lambda; \lambda \in \Lambda) \in \ell^2, \quad z = (z_\lambda; \lambda \in \Lambda) \in \ell^2.
\]

We postulate now dynamics for the forward rates. With $\cB(\bR_+)$ denoting the Borel $\sigma$-algebra on $\bR_+$ and $\cP$ denoting the predictable $\sigma$-algebra on $\Omega \times \bR_+$, consider functions $f(0; \cdot) : \bR_+ \to \bR$, $\kappa: \Omega \times \bR_+ \times \bR_+ \to \bR$, as well as  $\sigma: \Omega \times \bR_+ \times \bR_+ \to \ell^2$ with the following properties:
\begin{itemize}
	\item $f(0; \cdot)$ is $\cB(\bR_+)$-measurable, and $\int_0^T |f(0; u)| \ud u < \infty$ holds  for all $T \in \bR_+$.
	\item The random fields $\kappa$ and $\sigma$ are $\cP \otimes \cB(\bR_+)$-measurable, and satisfy $\kappa(t; u) = 0$ and $\sigma(t; u) = 0$ whenever $0 \leq u < t$, as well as, $\bP$-a.e., 
	\[
	\int_0^T \int_0^T \pare{|\kappa(s; t)| + \norm{\sigma(s; t)}^2_{\ell^2} } \ud s \ud t < \infty, \quad \forall \, T \in \bR_+.
	\]
\end{itemize}
Under the above conditions, one may define a jointly measurable random field $f: \Omega \times \bR_+ \times \bR_+ \to \bR$, such that
\begin{equation}
\label{eq:for_rate_dynamics}
f (\cdot; T) = f (0; T) + \int_0^\cdot \kappa(t, T) \ud t + \int_0^\cdot \inner{\sigma(t, T)}{ \ud W(t)}_{\ell^2}
\end{equation}
holds for all $T \in \bR_+$. Note that the assumptions placed on $\kappa$ and $\sigma$ imply that $f(t; s) = f(s; s)$ holds whenever $0 \leq s \leq t < \infty$. 

We  introduce the 
\emph{discounted $T$-bond price processes}
\[
P(\cdot; T) \dfn \exp \pare{- \int_0^T f(\cdot; u) \ud u}, \quad T \in \bR_+,
\]
in accordance with the discussion at the start of the present Subsection. The thus-defined continuous semimartingales $P_T (\cdot) \equiv P (\cdot; T)$, indexed by the maturity parameter $T$ in the index set $I   \equiv \bR_+$, are the asset prices in the resulting bond market.

Given our assumptions, the stochastic Fubini theorem\footnote{See, for instance, \cite{MR2966093}; although a single Brownian motion (actually, continuous local martingale) is used in \cite{MR2966093}, the extension to countably many with the $\ell^2$-norm used here is  straightforward.} applies,  leading to the decomposition 
\[
\log P (\cdot; T) = - \int_0^T f(0; u) \ud u - \int_0^\cdot \kappa^*(t; T) \ud t - \int_0^\cdot \inner{\sigma^*(t; T)}{ \ud W(t)}_{\ell^2},
\]
where the processes $\kappa^*(\cdot; T)$ and $\sigma^*(\cdot; T)$ are defined via
\begin{equation}
\label{eq:int_for_rate_dynamics}
\kappa^*(\cdot; T) = \int_0^T \kappa (\cdot; u) \ud u, \quad \sigma^*(\cdot; T) = \int_0^T \sigma(\cdot; u) \ud u, \quad T \in \bR_+.
\end{equation}
We have therefore  $P(0; T) = \exp \pare{- \int_0^T f(0; u) \ud u}$ and, upon defining
\[
c (\cdot; S, T) \dfn \inner{\sigma^* (\cdot; S)}{\sigma^* (\cdot; T)}_{\ell^2}, \quad (S, T) \in \bR_+ \times \bR_+
\]
as well as
\begin{equation}
\label{eq:bond_drift_rate}
\alpha (\cdot; T) \dfn - \kappa^* (\cdot; T) + \frac{1}{2} \norm{\sigma^* (\cdot; T)}_{\ell^2}, \quad T \in \bR_+,
\end{equation}
it follows that
\[
P(\cdot; T) = P(0; T) \,  \cE \pare{\int_0^\cdot \alpha (t; T) \ud t - \int_0^\cdot \inner{\sigma^*(t; T)}{ \ud W(t)}_{\ell^2}}
\]
The setting of Remark \ref{rem:stoch_rkhs_continuous} applies here. Indeed, with $\clo (t) = \Leb(t) = t$, $t \in \bR_+$, the Lebesgue clock, the mapping $T \mapsto \sigma^* (\omega, t; T) \in \ell^2$ is continuous for $(\bP \times \Leb)$-a.e. $(\omega, t) \in \Omega \times \bR_+$. Then, the resulting bond market is viable, if and only if, the process of \eqref{eq:bond_drift_rate} satisfies, $\bP$-a.e.,
\begin{equation} \label{eq:viable_HJB}
\int_0^T \norm{\alpha (t; \bR_+)}^2_{c (t; \bR_+, \bR_+)} \ud t < \infty, \quad  \forall \,  T \in \bR_+.
\end{equation}
This  is exactly the structural condition \eqref{eq:struct_fin} in the present setting.

It is straightforward to check that all process $P(\cdot; T)$, $T \in \bR_+$ are local martingales if, and only if, for every $T \in \bR_+$, the dynamics in \eqref{eq:for_rate_dynamics}, \eqref{eq:int_for_rate_dynamics} satisfy the following condition:
\begin{equation} 
\label{eq:consist_HJM}
	\kappa(\cdot; T) = \inner{\sigma (\cdot; T)}{\sigma^* (\cdot; T)}_{\ell^2}, \qquad (\bP \otimes \Leb) \textrm{-a.e.}
\end{equation} 
The above relationships \eqref{eq:consist_HJM} between the processes $\kappa$ and $\sigma$, that describe the dynamics  of the forward rates in \eqref{eq:for_rate_dynamics}, constitute the so-called Heath-Jarrow-Morton \emph{drift restrictions}. These are derived in \cite{HJM:92} within the classical  framework by  assuming the existence of an equivalent local martingale measure  and expressing the dynamics of the model under this measure. Of course, the requirement \eqref{eq:viable_HJB} still results in a viable market, and is weaker than \eqref{eq:consist_HJM}, the latter being equivalent to asking that $\alpha \equiv 0$ in \eqref{eq:bond_drift_rate}.

\appendix

\section{Reproducing Kernel Hilbert Space}
\label{appsec:rkhs}

We record here certain elements of the theory of \textbf{reproducing kernel Hilbert space} (abbreviated as \textbf{rkHs} in the sequel). We take the route of defining rkHs starting from   given kernel, as opposed to obtaining the kernel from a given rkHS. By the Moore-Aronszajn theorem, these two viewpoints are equivalent; see \cite{MR51437}. There are plenty of sources that one may consult regarding the theory of rkHs; for example, \cite{BerThom:03} and \cite{MR3526117}.

The discussion below will take place in a deterministic setting. We consider an arbitrary nonempty index set $I$, and use $\Fin (I)$ (resp., $\Cou (I)$) to denote the collection of all non-empty subsets of $I$ with finite (resp., at most countably infinite) cardinality.  For the purposes of Appendix \ref{appsec:rkhs}, we shall take $c \equiv c_{II} \in \bR^{I \times I}$ to be a \textbf{kernel} on $I$, i.e.,
\begin{itemize}
	\item \emph{symmetric}: $c_{ij} = c_{ji}$ holds for $(i, j) \in I \times I$; and
	\item \emph{positive definite}: $\sum_{(i, j) \in J \times J} \theta_i c_{ij} \theta_j \geq 0$ holds for any $J \in \Fin(I)$ and $(\theta_i; \, i \in J) \in \bR^J$.
\end{itemize}
We shall use subscripts to denote the arguments of functions with domains that include $I$ or its subsets; for example, we shall write $c_{ij}$ instead of $c(i, j)$ for $(i, j) \in I \times I$. 

\subsection{Finite-dimensional rkHs} \label{subsec:rkh_fin}

We start by considering a nonempty index set $I$ of  finite cardinality. In this case, and with a slight abuse of notation, we also regard $c$ as a linear transformation on $\bR^I$ via the recipe $\bR^I \ni (\theta_j; j \in I) \equiv \theta \longmapsto c \theta \equiv \sum_{j \in I} \theta_jc_{I j} \in \rkh(c) \subseteq \bR^I$. While one may regard $c$ as a symmetric and positive-definite matrix, we shall make all the definitions that follow consistent with the infinite-dimensional setting developed later on. 

Let $\rkh (c) \subseteq \bR^I$ denote the linear span of the ``column'' functions $\set{c_{I j} \such \, j \in I}$, where we set
\begin{equation} \label{eq:column_funct}
c_{I j} \dfn (c_{ij} ; \, i \in I) \in \bR^I ,\quad j \in I.
\end{equation}
In effect, $\rkh(c)$ is the image of $c$. We  introduce the bilinear form $\inner{\cdot}{\cdot}_c: \rkh(c) \times \rkh(c) \to \bR$ via
\begin{equation} 
\label{eq:repr_ker_bil_form}
\inner{f}{h}_c \dfn \sum_{(i, j) \in I \times I} \theta_i c_{ij} \eta_j, \qquad \text{for} \quad f \equiv \sum_{j \in I} \theta_j c_{I j} = c \theta, \quad h \equiv \sum_{j \in I} \eta_j c_{I j}  = c \eta.
\end{equation} 
With the above notation, note the identities  $\sum_{i \in I} \theta_i h_i = \inner{f}{h}_c = \sum_{i \in I} \eta_i f_i$, implying that the quantity $\inner{f}{h}_c$ does not depend on the representation of $f$ or $h$ in $\rkh(c)$.

It is  straightforward to check that the bilinear form $\inner{\cdot}{\cdot}_c$ is an inner product on $\rkh(c)$, and has the so-called \emph{reproducing  kernel property} $\inner{c_{I i}}{f}_c = f_i$, for $f \in \rkh (c )$ and $i \in I$. The  finite-dimensional inner product space $(\rkh(c), \inner{\cdot}{\cdot}_c)$  defined in this manner, is the rkHs associated with $c$. We  introduce  the usual norm $\norm{f}_c \dfn \sqrt{\inner{f}{f}_c}$ for $f \in \rkh(c)$ on account of \eqref{eq:repr_ker_bil_form}; for future notational consistency, define $\norm{f}_c = \infty$ whenever $f \in \bR^I \setminus \rkh(c)$.

We denote by $\id_{\bR^I}$ the identity operator on $\bR^I$, and define
\begin{equation} \label{eq:rkhs_pre_image_fin_dim}
\theta^{f; n} \dfn \pare{c + (1/n) \id_{\bR^I}}^{-1} f, \qquad f \in \bR^I, \quad n \in \bN.
\end{equation}

\begin{lemma} 
[\textbf{Generalised inverse}]
\label{lem:norm_bare_hand}
With the above notation, we have
\begin{equation} \label{eq:rkhs_norm_fin_dim_alg}
\norm{f}^2_c =  \lim_{n \to \infty}   \uparrow   \big \langle \theta^{f; n}, f \big \rangle_{\bR^I}, \quad f \in \bR^I .
\end{equation}
In particular,  for $f \in \bR^I \setminus \rkh(c)$, both sides of \eqref{eq:rkhs_norm_fin_dim_alg}
are equal to infinity; on the other hand,  if $f \in \rkh(c)$,   we have 
\begin{equation} \label{eq:rkhs_pre_image_fin_dim_a}
f  = \sum_{j \in J} \theta^f_j c_{I j} = c \theta^f, \quad \text{where} \quad \theta^f \dfn \lim_{n \to \infty} \theta^{f; n} = \lim_{n \to \infty} (c + (1/n) \id_{\bR^I})^{-1} f .
\end{equation}
\end{lemma}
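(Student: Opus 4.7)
My plan is to reduce everything to finite-dimensional spectral theory. Since $c$ is a symmetric positive-semidefinite operator on the Euclidean space $(\bR^I, \langle\cdot,\cdot\rangle_{\bR^I})$ (here $I$ has finite cardinality), the spectral theorem furnishes an orthonormal basis $\{e_1,\dots,e_d\}$ of $\bR^I$ (where $d = |I|$) consisting of eigenvectors of $c$, with eigenvalues $\lambda_1 \geq \dots \geq \lambda_k > 0 = \lambda_{k+1} = \dots = \lambda_d$. The symmetry of $c$ gives the orthogonal decomposition $\bR^I = \range(c) \oplus \kernel(c)$, with $\range(c) = \mathrm{span}\{e_i : \lambda_i > 0\}$ and $\kernel(c) = \mathrm{span}\{e_i : \lambda_i = 0\}$. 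Crucially, $\rkh(c) = \range(c)$ by its definition as the linear span of the columns of $c$.

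Writing $f = \sum_{i=1}^d f_i e_i$ with $f_i = \langle f, e_i\rangle_{\bR^I}$, a direct calculation yields
\[
\theta^{f; n} = (c + (1/n)\id_{\bR^I})^{-1} f = \sum_{i=1}^d \frac{f_i}{\lambda_i + 1/n} e_i, \qquad \big\langle \theta^{f;n}, f\big\rangle_{\bR^I} = \sum_{i=1}^d \frac{f_i^2}{\lambda_i + 1/n}.
\]
Each summand is nonnegative and monotonically increasing in $n$, establishing the monotone-convergence assertion in \eqref{eq:rkhs_norm_fin_dim_alg} termwise. A dichotomy now emerges. If $f \notin \rkh(c) = \range(c)$, then $f_i \neq 0$ for some index $i$ with $\lambda_i = 0$; the corresponding term becomes $n f_i^2 \to \infty$, so the limit is $+\infty$, matching the convention $\norm{f}_c = \infty$ in this case. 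If instead $f \in \rkh(c)$, then $f_i = 0$ whenever $\lambda_i = 0$, and the limit equals $\sum_{i:\lambda_i>0} f_i^2/\lambda_i$, a finite quantity.

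In this second case, the sequence $\theta^{f;n}$ itself converges in $\bR^I$ to
\[
\theta^f \dfn \sum_{i:\lambda_i>0} \frac{f_i}{\lambda_i} e_i,
\]
and a direct check gives $c \theta^f = \sum_{i:\lambda_i>0} f_i e_i = f$, establishing \eqref{eq:rkhs_pre_image_fin_dim_a}. Finally, by \eqref{eq:repr_ker_bil_form}, for \emph{any} representation $f = c\theta$ one has $\norm{f}^2_c = \sum_{i \in I} \theta_i f_i = \langle \theta, f \rangle_{\bR^I}$; applying this with $\theta = \theta^f$ yields $\norm{f}^2_c = \langle \theta^f, f\rangle_{\bR^I} = \sum_{i:\lambda_i>0} f_i^2/\lambda_i$, matching the limit computed above and closing \eqref{eq:rkhs_norm_fin_dim_alg}.

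The argument is mostly mechanical; the only non-routine point is recognizing that the definition of $\inner{\cdot}{\cdot}_c$ in \eqref{eq:repr_ker_bil_form} is representation-independent, so that $\|f\|_c^2$ may be evaluated on the pseudo-inverse representation $\theta^f$ regardless of how $f$ was originally written as $c\theta$; this representation-independence has already been noted in the paragraph following \eqref{eq:repr_ker_bil_form}, so no genuine obstacle remains.
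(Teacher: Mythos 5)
Your proof is correct and follows essentially the same route as the paper: both diagonalise $c$ via the spectral theorem (the paper writes $c = u^* d u$ with $u$ unitary and $d$ diagonal, which is your eigenbasis in operator form), split $f$ into its $\range(c)$ and $\kernel(c)$ components, and observe that the kernel component forces the quadratic form $\langle \theta^{f;n}, f\rangle_{\bR^I}$ to blow up like $n\norm{\eta}^2_{\bR^I}$ while the range component converges to the representation-independent value $\langle \theta^f, f\rangle_{\bR^I} = \norm{f}^2_c$. Your explicit termwise verification of the monotonicity in $n$ is a small bonus the paper's proof leaves implicit.
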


\begin{proof}
Let $u$ be a linear operator on $\bR^I$, unitary with respect to $\inner{\cdot}{\cdot}_{\bR^I}$ and such that $c = u^* d u$, where$u^*$ is the adjoint of $u$, and   $d$ is a positive diagonal operator. Define $J \dfn \set{j \in I \such d_{jj} > 0}$.
	
Let $f \in \bR^I$, and write $f = c \theta + \eta$ for some $\theta \in \bR^I$ and $\eta \in \bR^I$ such that $c \eta = 0$; then, $f \in \rkh(c)$ is equivalent to $\eta = 0$. We note that $c \eta = 0$ leads to $\pare{c + (1/n) \id_{\bR^I}}^{-1} \eta = n \eta$   for all $n \in \bN$. Let $\xi^n$  be the diagonal operator with $\xi^n_{ij} = \delta_{ij} / (\delta_{ij} + n^{-1})$ for $(i,j) \in J^2$, and $\xi^n_{ij} =0$ for $(i,j) \in I^2 \setminus J^2$. Straightforward algebra shows that $\pare{c + (1/n) \id_{\bR^I}}^{-1} c \theta = u^* \xi^n u \theta$ holds for $n \in \bN$. It follows from \eqref{eq:rkhs_pre_image_fin_dim} that $\theta^{f;n} = u^* \xi^n u \theta + n \eta$ also holds for $n \in \bN$.
	
Consider first  the case  $f \in \rkh(c)$, i.e., $\eta = 0$. Then, the sequence $(\xi^n; n \in \bN)$ converges to the diagonal operator $\xi$ with $\xi_{jj} = 1$ for $j \in J$, and $\xi_{jj} = 0$ for $j \in I \setminus J$; it follows that $\theta^f \dfn \lim_{n \to \infty} \theta^{f;n} = u^* \xi u \theta \in \bR^I$. Since $d \xi = d$, we deduce   $c \theta^f = u^* d u u^* \xi u \theta = u^* d \xi u \theta = u^* d u \theta = c \theta = f$. In particular,   $\lim_{n \to \infty} \inner{\theta^{f;n}}{f}_{\bR^I} = \inner{\theta^f}{f}_{\bR^I} = \norm{f}^2_c$ holds.
	
Suppose next, that $f \in \bR^I \setminus \rkh(c)$, i.e., $\eta \neq 0$. Then, $\lim_{n \to \infty} (1/n) \theta^{f;n} = \eta$ implies $\lim_{n \to \infty} (1/n) \big \langle \theta^{f; n}, f \big \rangle_{\bR^I} = \inner{\eta}{c \theta + \eta}_{\bR^I} = \norm{\eta}^2_{\bR^I} > 0$. We obtain $\lim_{n \to \infty} \inner{\theta^{f;n}}{f}_{\bR^I} = \infty = \norm{f}^2_c$, which completes the proof.
\end{proof}

The significance of Lemma \ref{lem:norm_bare_hand} is clear. The definition for $\theta^f$ in \eqref{eq:rkhs_pre_image_fin_dim_a} will always  ensure that the representation $f = c \theta^f$ holds whenever $f \in \rkh(c)$, even if $c$ (regarded as a linear transformation of $\bR^I$) fails to be invertible. The limiting procedure in \eqref{eq:rkhs_pre_image_fin_dim} should not be confounded with the Tychonoff regularization, used to obtain the Moore-Penrose pseudo-inverse of $f$ under $c$; this procedure would replace the so-defined $\theta^{f; n}$ by $\psi^{f; n} \dfn \pare{c^2 + (1/n) \id_{\bR^I}}^{-1} c f$, for $n \in \bN$. Then, using notation from the proof of Lemma \ref{lem:norm_bare_hand}, $\lim_{n \to \infty} \psi^{f; n} = u^* \xi u \theta$ holds, and implies that $\lim_{n \to \infty} \inner{\psi^{f; n}}{f}_{\bR^I}$ is always a finite real number; but this makes it impossible to recognise whether $f$ belongs to $\rkh (c)$, or not.

\subsection{General rkHs} \label{subsec:rkh_gen}

Now, assume that $I$ is an arbitrary nonempty index set. As  in \S \ref{subsec:rkh_fin}, we  consider the ``column'' functions $\set{c_{I j} \such \, j \in I}$ as in \eqref{eq:column_funct}. 

For any given $J \in \Fin(I)$, we denote by  $\rkh (c; J) \subseteq \bR^I$   the linear span of the columns $\set{c_{I j}; \, j \in J}$; and  again as in   \S \ref{subsec:rkh_fin},  we define on $\rkh (c; J)$ the bilinear form $\inner{\cdot}{\cdot}_{c; J}$ via $\inner{f}{g}_{c; J} \dfn \sum_{(i, j) \in J \times J} \theta_i c_{ij} \eta_j$, where $f = \sum_{j \in J} \theta_j c_{I j}$ and $h = \sum_{j \in J} \eta_j c_{I j}$. Thus $(\rkh (c; J), \inner{\cdot}{\cdot}_{c; J})$ becomes a finite-dimensional inner product space.

For arbitrary $J \in \Fin (I)$, $Q \in \Fin(I)$ with $J \subseteq Q$, the finite-dimensional Hilbert space $\big( \rkh (c; Q), \inner{\cdot}{\cdot}_{c; Q} \big)$ is an extension of $\big( \rkh (c; J), \inner{\cdot}{\cdot}_{c; J} \big)$. This means that $\rkh (c; J) \subseteq \rkh (c; Q)$ holds, and that $\inner{\cdot}{\cdot}_{c; J}$ is the restriction of $\inner{\cdot}{\cdot}_{c; Q}$ on the product space $\rkh (c; J) \times \rkh (c; J)$. We deduce that  an inner product $\inner{\cdot}{\cdot}_c$ can be   defined consistently on the vector space
\begin{equation} 
\label{eq:repr_ker_Fin}
\rkh (c; \Fin) \dfn \bigcup_{J \in \Fin (I)} \rkh (c; J) \subseteq \bR^I.
\end{equation} 
We  introduce also the associated norm $\rkh (c; \Fin) \ni f \mapsto \norm{f}_c \dfn \sqrt{\inner{f}{f}_{c}}$. By definition, we have once again 
the reproducing kernel property  
\begin{equation} 
\label{eq:repr_ker_propery}
\inner{c_{I i}}{f}_c = f_i, \qquad f \in \rkh (c; \Fin), \quad i \in I;
\end{equation} 
this implies  $|f_i| \leq \norm{c_{I i}}_c \norm{f}_c = \sqrt{c_{ii}} \norm{f}_c$, $ i \in I$, which establishes the continuity of the linear \textbf{evaluation functional} $\rkh (c; \Fin) \ni f \longmapsto f_i \in \bR$, for every $i \in I$.

The set $I$ does not necessarily have finite cardinality, so the resulting inner-product space $\pare{\rkh (c; \Fin), \inner{\cdot}{\cdot}_c}$ need not be complete. The following definition accounts for this fact.  We then define $\pare{\rkh (c), \inner{\cdot}{\cdot}_c}$, the \textbf{rkHs associated with the positive-definite kernel $c$}, as the Hilbert-space  completion of the inner-product space $\pare{\rkh (c; \Fin)  \inner{\cdot}{\cdot}_c}$   in \eqref{eq:repr_ker_Fin}, with the same notation for the extended inner product $\inner{\cdot}{\cdot}_c$ as before.

The above completed space $\rkh (c)$ is, in general, identified abstractly with equivalence classes of Cauchy sequences in $\rkh (c; \Fin)$. It turns out, however, that the rkHs $\rkh (c)$  has also another,  very concrete and useful,  description; this is  discussed in   \S \ref{subsec:rkh_alt} below. 

We   note that for any Cauchy sequence $(f^n; \, n \in \bN)$ in $\rkh (c; \Fin)$, the real-valued sequence $(f^n_i; \, n \in \bN)$ is Cauchy in $\bR$; this follows from the continuity of evaluation functionals, and implies  that the limit $f_i \dfn \lim_{n \to \infty} f^n_i$ exists for every $i \in I$. Therefore, the space $\rkh (c)$ can---and always will---be identified with a subset of $\bR^I$; indeed, the rkHs $\rkh (c)$ coincides with the subset of $\bR^I$ consisting of the  \emph{point-wise} limits of all Cauchy sequences in $\pare{\rkh (c; \Fin), \inner{\cdot}{\cdot}_c}$.

We further extend  the definition of $\rkh(c; J)$, from the case of $J \in \Fin(I)$ to that of an \emph{arbitrary subset} $J \subseteq I$, by setting it to be the $\norm{\cdot}_c$-closure in $\rkh(c)$ of the linear span of the column functions $\set{c_{Ij} \such j \in J}$. We also  observe the identity
\[
\rkh(c) \equiv \rkh(c; I) = \bigcup_{J \in \Cou(I)} \rkh(c; J).
\]
Indeed, the set-inclusion $\bigcup_{J \in \Cou(I)} \rkh(c; J) \subseteq \rkh(c)$ is obviously true. Concerning the reverse inclusion, we  note that   given any $f \in \rkh(c)$,    any $\rkh(c; \Fin)$-valued sequence $(f^n; \, n \in \bN)$ converging to $f$, and any  sequence $(J^n; \, n \in \bN)$    in $\Fin(I)$ with the property   $f^n \in \rkh(c; J_n)$ for every  $n \in \bN$, we have  clearly 
$f \in \rkh(c; Q)$, where $Q \equiv \bigcup_{n \in \bN} J^n \in \Cou(I)$.

\subsection{Restrictions and projections} \label{subsec:rkh_rest_proj}

For arbitrary $J \subseteq I$, denote by  $f_J \equiv \pare{f_i ; i \in J} \in \bR^J$ the restriction of $f \in \bR^I$ to $J$; and by $c_{JJ} \equiv (c_{ij}; \, (i, j) \in J \times J)$   the restriction of $c$ to $J \times J$.

\begin{lemma} \label{lem:rkhs_proj_rest}
For arbitrary $J \subseteq I$, the mapping $\rkh (c; J) \ni f \mapsto f_J \in \rkh (c_{JJ})$ is well-defined, and a Hilbert space isomorphism.
\end{lemma}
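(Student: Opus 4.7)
The plan is to first handle the case $J \in \Fin(I)$ at the level of finite linear combinations of the columns, observing that the correspondence $c_{Ij} \mapsto c_{Jj}$ (for $j \in J$) extends by linearity to an isometry on the pre-Hilbert space spanned by $\{c_{Ij} : j \in J\}$, and then to pass to closures by a density/continuity argument valid for arbitrary $J \subseteq I$.

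More concretely, I would first observe that for any $j \in J$, the restriction $(c_{Ij})_J$ equals $c_{Jj}$, which is a column of the kernel $c_{JJ}$ and therefore an element of $\rkh(c_{JJ})$. By linearity, any finite combination $f = \sum_{j \in J'} \theta_j c_{Ij}$ (with $J' \subseteq J$ finite) restricts to $f_J = \sum_{j \in J'} \theta_j c_{Jj} \in \rkh(c_{JJ})$, and the definition \eqref{eq:repr_ker_bil_form} gives immediately
\[
\norm{f_J}^2_{c_{JJ}} = \sum_{(i,k) \in J' \times J'} \theta_i c_{ik} \theta_k = \norm{f}^2_c,
\]
so the map is an isometry on this dense subspace. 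Because evaluation functionals are continuous on rkHs (by the reproducing kernel property and Cauchy--Schwarz), this isometry extends uniquely by continuity to an isometric linear map $T : \rkh(c; J) \to \rkh(c_{JJ})$.

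The next step is to verify that $T$ is indeed the pointwise-restriction map. Given $f \in \rkh(c; J)$, choose a sequence $(f^n)_{n \in \bN}$ of finite combinations of $\{c_{Ij} : j \in J\}$ with $f^n \to f$ in $\rkh(c)$; then $f^n_J \to Tf$ in $\rkh(c_{JJ})$. For each fixed $i \in J$, continuity of evaluation in both spaces gives $f^n_i \to f_i$ and $f^n_i = (f^n_J)_i \to (Tf)_i$, so $(Tf)_i = f_i$ for every $i \in J$, i.e., $Tf = f_J$. This simultaneously shows well-definedness of the restriction map and its identification with $T$.

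It remains to establish surjectivity and injectivity. For surjectivity, any $g \in \rkh(c_{JJ})$ is a $\norm{\cdot}_{c_{JJ}}$-limit of finite combinations $g^n = \sum_{j} \theta^n_j c_{Jj}$; the corresponding preimages $f^n = \sum_j \theta^n_j c_{Ij} \in \rkh(c; J)$ form a Cauchy sequence in $\rkh(c)$ by the isometry relation, hence converge to some $f \in \rkh(c; J)$ with $Tf = g$. For injectivity, if $f \in \rkh(c; J)$ satisfies $f_J = 0$, then the reproducing property \eqref{eq:repr_ker_propery} yields $\inner{f}{c_{Ij}}_c = f_j = 0$ for every $j \in J$; since $f$ lies in the closure of the linear span of $\{c_{Ij} : j \in J\}$, this forces $\norm{f}_c = 0$. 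The only genuinely delicate point, which is the step I would flag as the main obstacle, is the compatibility between the abstractly-defined extension $T$ and the concrete pointwise restriction --- this is precisely what the continuity-of-evaluation argument above resolves, and it is what allows the finite-dimensional identity to transfer intact to the (possibly non-separable) completions.
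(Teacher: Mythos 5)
Your proof is correct and follows essentially the same route as the paper: establish the isometry on finite linear combinations of columns via the defining formula \eqref{eq:repr_ker_bil_form}, then pass to general $J$ by a density argument that uses the continuity of evaluation functionals. The paper compresses the second step into one sentence; you have simply spelled out the details (identification of the abstract extension with pointwise restriction, surjectivity, injectivity), all of which are sound.
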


\begin{proof}
First, we assume that $J \in \Fin(I)$; then  $f = \sum_{i \in J} \theta_j c_{I j} \in \rkh (c; J)$ holds for $(\theta_j; \, j \in J) \in \bR^J$ if and only if $f_J = \sum_{i \in J} \theta_j c_{J j} \in \rkh (c_{JJ})$; and by definition, we have also then 
	\[
	\norm{f}^2_{c; J} = \sum_{(i,j) \in J \times J} \theta_i c_{ij} \theta_j = \norm{f_J}^2_{c_{JJ}}.
	\]
	The case of an arbitrary subset $J$ follows by a straightforward density argument, upon 
	recalling  the continuity  of linear evaluation functionals.   	
\end{proof}

It follows from Lemma \ref{lem:rkhs_proj_rest} that $\rkh (c_{JJ}) \subseteq \bR^J$ consists exactly of restrictions  of the elements of $\rkh (c; J) \subseteq \bR^I$ on the subset $J$; and that the coordinates $\pare{f_i; \, i \in I \setminus J}$ of any $f \in \rkh (c; J)$,  are determined entirely by $f_J \equiv \pare{f_i; \, i \in J}$ and by the structure of the kernel $c$.

\smallskip

For an arbitrary  subset  $J \subseteq I$, we denote by $\pi_{c; J} (f) \in \bR^I$ the $\inner{\cdot}{\cdot}_c$-projection of $f \in \rkh(c)$ on $\rkh(c; J)$. Since the reproducing kernel property $\inner{c_{I j}}{f}_c = f_j$ of \eqref{eq:repr_ker_propery} holds for all $j \in J$, and the linear span of $\set{c_{Ij}; \, j \in J}$ is dense in $\rkh(c; J)$, we have $\pi_{c; J} (f)_j = f_j$, for all $f \in \rkh(c)$ and $j \in J$. In fact, $\pi_{c; J}(f)$ is the unique element $h \in \rkh(c; J) \subseteq \bR^I$, whose restriction $h_J$ on $J$ coincides with the restriction $f_J$ of $f$ on $J$.

As a consequence of the above discussion and of Lemma \ref{lem:rkhs_proj_rest},  we note that  $f_J \in \rkh(c_{JJ})$ and $\norm{f_J}_{c_{JJ}} = \norm{\pi_{c; J} (f)}_{c} \leq \norm{f}_c$ hold for $f \in \rkh(c)$, $J \subseteq I$. Using the index set $Q \subseteq I$ in place of $I$, we obtain the inequality $\norm{f_J}_{c_{JJ}} \leq \norm{f_Q}_{c_{QQ}}$  whenever $J \subseteq Q \subseteq I$, $f_Q \in \rkh(c_{QQ})$. In fact, it is straightforward to check 
\begin{equation} 
\label{eq:rkh_norm_incr}
f \in \bR^I, \quad J \subseteq Q \subseteq I \quad \Longrightarrow \quad  \norm{f_J}_{c_{JJ}} \leq \norm{f_Q}_{c_{QQ}} \leq \norm{f}_{c} .
\end{equation}
We  use here  the convention  that the norms,  of those elements  which do not belong to the corresponding spaces,   are understood to be equal to infinity.

\subsection{An alternative description of rkHs}
\label{subsec:rkh_alt}

The following result will be used as the basis for an alternative,  concrete characterisation of  the rkHs $\rkh(c)$, and for its Hilbert-space  structure. This characterization is developed in Remark \ref{rem:rkhs_norm_charact} below.

\begin{lemma} \label{lem:rkhs_norm_charact}
It holds that
\begin{equation} \label{eq:rkhs_norm_charact}
\sup_{J \in \Fin(I)} \norm{f_J}_{c_{JJ}} = \max_{Q \in \Cou(I)} \norm{f_Q}_{c_{QQ}} = \norm{f}_c, \quad f \in \bR^I.
\end{equation}
\end{lemma}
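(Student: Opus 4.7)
The inequalities $\sup_{J \in \Fin(I)} \norm{f_J}_{c_{JJ}} \leq \sup_{Q \in \Cou(I)} \norm{f_Q}_{c_{QQ}} \leq \norm{f}_c$ are immediate from \eqref{eq:rkh_norm_incr}, so writing $S_1 \dfn \sup_{J \in \Fin(I)} \norm{f_J}_{c_{JJ}}$, the content of the lemma is the reverse chain $\norm{f}_c \leq S_1$ together with attainment of the supremum over countable $Q$. I would split into cases according to whether $f \in \rkh(c)$ or not.

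If $f \in \rkh(c)$, the identity $\rkh(c) = \bigcup_{Q \in \Cou(I)} \rkh(c; Q)$ supplies some $Q^* \in \Cou(I)$ with $f \in \rkh(c; Q^*)$, and Lemma \ref{lem:rkhs_proj_rest} immediately yields $\norm{f_{Q^*}}_{c_{Q^* Q^*}} = \norm{f}_c$, witnessing the $\max$. For $S_1 \geq \norm{f}_c$, I would exhaust $Q^*$ by a nondecreasing sequence $(J_n)$ in $\Fin(I)$ with $\bigcup_n J_n = Q^*$; the linear span of $\{c_{Ij} : j \in \bigcup_n J_n\}$ is dense in $\rkh(c; Q^*)$, so $\pi_{c; J_n}(f) \to f$ in $\norm{\cdot}_c$, and since $\pi_{c; J_n}(f)|_{J_n} = f_{J_n}$ (the reproducing property) combined with $\norm{\pi_{c; J_n}(f)}_c = \norm{f_{J_n}}_{c_{J_n J_n}}$ (Lemma \ref{lem:rkhs_proj_rest}), the result follows.

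The substantive case is $f \notin \rkh(c)$, where I would prove $S_1 = \infty$ by contrapositive: assume $S_1 < \infty$ and derive $f \in \rkh(c)$. Choose a nondecreasing $(J_n)$ in $\Fin(I)$ with $\norm{f_{J_n}}_{c_{J_n J_n}} \nearrow S_1$, and lift via Lemma \ref{lem:rkhs_proj_rest} to $\tilde{f}^n \in \rkh(c; J_n) \subseteq \rkh(c)$ with $\tilde{f}^n|_{J_n} = f_{J_n}$ and $\norm{\tilde{f}^n}_c = \norm{f_{J_n}}_{c_{J_n J_n}}$. For $m \geq n$ the reproducing property forces $\pi_{c; J_n}(\tilde{f}^m) = \tilde{f}^n$, and Pythagoras gives $\norm{\tilde{f}^m - \tilde{f}^n}_c^2 = \norm{\tilde{f}^m}_c^2 - \norm{\tilde{f}^n}_c^2 \to 0$; so $(\tilde{f}^n)$ converges in $\rkh(c)$ to some $h$ with $\norm{h}_c = S_1$, and continuity of evaluation functionals gives $h_j = f_j$ for every $j \in \bigcup_n J_n$.

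The key obstacle is then to upgrade $h = f$ from $\bigcup_n J_n$ to \emph{all} of $I$. For any $i \in I$ I would repeat the construction with enlarged sets $J_n^i \dfn J_n \cup \{i\}$ to produce $\tilde{g}^n \in \rkh(c; J_n^i)$ with $\tilde{g}^n|_{J_n^i} = f_{J_n^i}$ and $\norm{\tilde{g}^n}_c = \norm{f_{J_n^i}}_{c_{J_n^i J_n^i}} \leq S_1$; the same Cauchy argument produces a limit $h' \in \rkh(c)$ with $h'_i = f_i$ and $\norm{h'}_c \leq S_1$. Since $\tilde{g}^n|_{J_n} = f_{J_n} = \tilde{f}^n|_{J_n}$, the reproducing property gives $\pi_{c; J_n}(\tilde{g}^n) = \tilde{f}^n$, so Pythagoras yields $\norm{\tilde{g}^n - \tilde{f}^n}_c^2 = \norm{\tilde{g}^n}_c^2 - \norm{\tilde{f}^n}_c^2 \leq S_1^2 - \norm{\tilde{f}^n}_c^2 \to 0$, whence $h = h'$ and $f_i = h_i$; varying $i$ gives $f = h \in \rkh(c)$, the desired contradiction. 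Finally, when $S_1 = \infty$ one exhibits an attaining $Q \in \Cou(I)$ by taking $Q \dfn \bigcup_n J_n$ for any nondecreasing $(J_n) \subseteq \Fin(I)$ with $\norm{f_{J_n}}_{c_{J_n J_n}} \to \infty$; then \eqref{eq:rkh_norm_incr} applied inside the kernel $c_{QQ}$ forces $\norm{f_Q}_{c_{QQ}} = \infty$.
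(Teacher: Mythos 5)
Your proof is correct and follows essentially the same route as the paper's: the heart of both arguments is lifting the restrictions $f_{J_n}$ to elements of $\rkh(c; J_n)$ via Lemma \ref{lem:rkhs_proj_rest}, using the Pythagorean identity for nested projections to get a Cauchy sequence whose limit agrees with $f$ on $\bigcup_n J_n$, and then enlarging to $J_n \cup \{i\}$ to upgrade that agreement to every coordinate of $I$. The differences are purely organizational --- you case-split on whether $f \in \rkh(c)$ and phrase the substantive case as a contrapositive, while the paper splits on whether $\sup_{J \in \Fin(I)} \norm{f_J}_{c_{JJ}}$ is finite and identifies the two limits via a projection-plus-equal-norms argument rather than your direct norm estimate --- and do not affect the substance.
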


\begin{proof}
On the strength of  \eqref{eq:rkh_norm_incr}, we have 
the string of inequalities 
\begin{equation} \label{eq:rkhs_norm_charact_too}
\nu(f) \dfn \sup_{J \in \Fin(I)} \norm{f_J}_{c_{JJ}} \leq \sup_{Q \in \Cou(I)} \norm{f_Q}_{c_{QQ}} \leq \norm{f}_c, \quad f \in \bR^I.
\end{equation}
Let $(J^n; \, n \in \bN)$ be a sequence in $\Fin(I)$ such that $\lim_{n \to \infty} \norm{f_{J^n}}_{c_{J^n J^n}} = \nu(f)$. In view of \eqref{eq:rkh_norm_incr}, we can choose $(J^n)_{n \in \bN}$ to be nondecreasing. With $R \dfn  \bigcup_{n \in \bN} J^n \in \Cou(I)$, the inequalities of   \eqref{eq:rkh_norm_incr} imply again  that $\nu(f) \leq \norm{f_R}_{c_{R R}} \leq \norm{f}_c$.
	
If $\nu(f) = \infty$, then \eqref{eq:rkhs_norm_charact} follows  directly from the string of inequalities \eqref{eq:rkhs_norm_charact_too}. Thus, for the remainder of the proof, assume that $\nu(f) < \infty$. For each $n \in \bN$, Lemma \ref{lem:rkhs_proj_rest} implies the existence of $g^n \in \rkh (c; J^n) \subseteq \rkh (c)$ with $g^n_{J^n} = f_{J^n}$ and $\norm{g^n}_c = \norm{f_{J^n}}_{c_{J^n J^n}}$. Furthermore, for $m \leq n$, since $g^m_{J^m} = f_{J^m} = g^n_{J^m}$, the discussion in  \S \ref{subsec:rkh_rest_proj} implies that $g^m$ is the $\inner{\cdot}{\cdot}_c$-projection of $g^n$ on $\rkh (c; J^m)$; therefore, 
$\norm{g^n - g^m}^2_c = \norm{g^n}^2_c - \norm{g^m}^2_c = \norm{f_{J^n}}^2_{c_{J^n J^n}} - \norm{f_{J^m}}^2_{c_{J^m J^m}}$. Given that $\lim_{n \to \infty} \norm{f_{J^n}}_{c_{J^n J^n}} = \nu(f) < \infty$, it follows that $(g^n; n \in \bN)$ is a Cauchy sequence  in $(\rkh(c), \inner{\cdot}{\cdot}_c)$; we denote its limit by  $g \in \rkh(c)$, and notice that $\norm{g}_c = \nu(f)$. We claim that $g = f$; once this has been established, \eqref{eq:rkhs_norm_charact} will follow  from the inequalities $\nu(f) \leq \norm{f_R}_{c_{R R}} \leq \norm{f}_c$ already discussed.

We proceed to show    $g = f$. We fix an arbitrary index $i \in I$, and follow the argument of the previous paragraph with the sets $J^n \cup \set{i}$ in place of $J^n$, obtaining along the way a new Cauchy sequence $(h^n; n \in \bN)$ in place of $(g^n; n \in \bN)$, and a new limit $h \in \rkh(c)$ in place of $g \in \rkh(c)$. Observe that we still have $\norm{h}_c = \nu(f)$. Since $i \in J^n \cup \set{i}$, we note that $h^n_i = f_i$ holds for all $n \in \bN$; this    gives $h_i = f_i$, because the evaluation functionals in $\rkh(c)$ are continuous. Furthermore, for every $m \leq n$,   $g^m$ is the $\inner{\cdot}{\cdot}_c$-projection of $h^n$ on $\rkh (c; J^m)$. In turn, this implies that $g^m$ is the $\inner{\cdot}{\cdot}_c$-projection of $h$ on $\rkh (c; J^m)$, for each $m \in \bN$; 
thus $g$ is the $\inner{\cdot}{\cdot}_c$-projection of $h$ on $\rkh (c; R)$. But since $\norm{g} = \norm{h}$, we have $g = h$, which implies  $g_i = h_i = f_i$. Since $i \in I$ is arbitrary  we obtain $g = f$, concluding the proof.
\end{proof}

\begin{remark} \label{rem:rkhs_norm_charact}
An immediate consequence of Lemma \ref{lem:rkhs_norm_charact}, is the equivalence of  the following statements for an arbitrary element $f \in \bR^I$:
\begin{enumerate}
	\item $f \in \rkh(c)$.
	\item $f_Q \in \rkh(c_{QQ})$ for every $Q \in \Cou(I)$.
	\item $f_J \in \rkh(c_{JJ})$ for every $J \in \Fin(I)$, and $\sup_{J \in \Fin(I)} \norm{f_J}_{c_{JJ}} < \infty$.
\end{enumerate}

\smallskip
Another important aspect of Lemma \ref{lem:rkhs_norm_charact}, is that it provides an alternative characterization of  the space $\rkh(c) \subseteq \bR^I$ and of its inner-product structure. To present this extension, we define
\[
\nu_c(f; J) \dfn    \lim_{n \to \infty} \uparrow \sqrt{ \inner{f_J}{(c_{JJ} + (1/n) \id_{\bR^J})^{-1} f_J}_{\bR^J} }, \quad f \in \bR^I, \  \  J \in \Fin(I) 
\]
as in     \S \ref{subsec:rkh_fin} and by analogy with \eqref{eq:rkhs_norm_fin_dim_alg}, then   set 
\begin{equation}
\label{eq:rkHs_norm}
\nu_c (f) \dfn \sup_{J \in \Fin(I)} \nu_c(f; J), \quad f \in \bR^I.
\end{equation}
A combination of \eqref{eq:rkhs_norm_fin_dim_alg} and Lemma \ref{lem:rkhs_norm_charact} leads to the identifications
\[
\rkh(c) = \set{f \in \bR^I \such \nu_c(f) < \infty} , \qquad  
\nu_c (\cdot) = \norm{\cdot}_c
\]
for the rkHs $\rkh(c)$ and for the norm of \eqref{eq:rkHs_norm}, respectively.  The inner-product $\inner{\cdot}{\cdot}_c$ can then be recovered via polarization, namely,
\[
\inner{f}{g}_c \dfn \frac{1}{4} \big( (  \nu_c(f+g)  )^2 +  ( \nu_c(f-g) )^2 \big), \quad (f,g) \in  \bR^I \times \bR^I .
\]

The construction   described right above, constitutes a very direct  procedure, algebraic and limiting in nature, for obtaining the rkHs $\rkh(c)$; it does not involve any abstract completion. This approach is used in Section \ref{sec:stoch_rkhs} for defining stochastic counterparts of these notions.
\end{remark}

\subsection{Continuity of elements in rkHs}

When the 
index set $I$ carries a topology, it is of interest  to study the continuity properties of the elements of $\rkh(c)$ since these are,  in particular, elements of the function space $\bR^I$. Clearly, a necessary condition for all elements of $\rkh(c)$ to be continuous, is the continuity of the ``column'' functions $c_{Ij} \equiv (c_{ij}; i \in I) \in \rkh(c)$, for every $j \in I$. In fact, the next result shows that not much more is needed.

\begin{lemma} \label{lem:rkhs_continuous}
Suppose that $I$ is endowed with a topology, and assume that:
\begin{itemize}
	\item $c_{Ij} \equiv (c_{ij}; i \in I) \in \rkh(c)$ is continuous, for every $j \in I$;
	\item the mapping $I \ni j \mapsto c_{jj} \in \bR$ is locally bounded: for every $i \in I$, there exists an open neighbourhood $J(i) \subseteq I$ with $\sup_{j \in J (i)} c_{jj} < \infty$.
\end{itemize}
Then, all elements in $\rkh (c)$ are continuous.
\end{lemma}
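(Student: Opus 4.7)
The plan is to exploit the reproducing kernel property to control pointwise evaluation, and then upgrade pointwise convergence to local uniform convergence via the local boundedness assumption on the diagonal.

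First I would observe that any element of the dense subspace $\rkh(c;\Fin) \subseteq \rkh(c)$, being a finite linear combination $\sum_{j \in J}\theta_j c_{Ij}$ of continuous functions by the first hypothesis, is itself continuous on $I$. Thus it suffices to show that the property of being continuous passes to $\norm{\cdot}_c$-limits. To this end, fix $f \in \rkh(c)$ and pick a sequence $(f^n; n \in \bN)$ in $\rkh(c;\Fin)$ with $\lim_{n\to\infty}\norm{f - f^n}_c = 0$; each $f^n$ is continuous by the preceding observation.

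The key estimate comes from the reproducing kernel property \eqref{eq:repr_ker_propery} together with Cauchy-Schwarz: for every $j \in I$,
\[
|f_j - f^n_j| = |\inner{c_{Ij}}{f - f^n}_c| \leq \norm{c_{Ij}}_c \norm{f - f^n}_c = \sqrt{c_{jj}}\,\norm{f - f^n}_c.
\]
Now I would invoke the local boundedness hypothesis: fix $i \in I$ and choose an open neighbourhood $J(i) \subseteq I$ with $\kappa(i) \dfn \sup_{j \in J(i)} c_{jj} < \infty$. Then the estimate above gives
\[
\sup_{j \in J(i)} |f_j - f^n_j| \leq \sqrt{\kappa(i)}\,\norm{f - f^n}_c \xrightarrow[n \to \infty]{} 0,
\]
so the continuous functions $(f^n; n \in \bN)$ converge to $f$ \emph{uniformly} on $J(i)$. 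Since the uniform limit of continuous functions on $J(i)$ is continuous on $J(i)$, $f$ is continuous at $i$; as $i \in I$ was arbitrary, $f$ is continuous on $I$.

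There is no real obstacle here; the only thing one needs to be careful about is that pointwise convergence alone does not preserve continuity, which is precisely why the local boundedness of $j \mapsto c_{jj}$ is the right hypothesis to invoke. It converts the pointwise estimate from the reproducing kernel property into the local uniform control required to conclude continuity of the limit.
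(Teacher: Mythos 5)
Your proof is correct and rests on exactly the same two ingredients as the paper's: the reproducing-kernel/Cauchy--Schwarz bound $|f_j - f^n_j| \leq \sqrt{c_{jj}}\,\norm{f - f^n}_c$ and the local boundedness of $j \mapsto c_{jj}$ to make that bound uniform near a given point. The paper merely unpacks your ``uniform limit of continuous functions is continuous'' step into an explicit three-term triangle-inequality estimate along a net converging to $i$, so the two arguments are essentially identical.
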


\begin{proof}
The fact that the function $c_{Ij}$ is continuous for every $j \in I$, implies that all elements of $\rkh(c; \Fin)$ are continuous.

Fix $f \in \rkh (c)$, $i \in I$ and a net $(i^\lambda; \ \lambda \in \Lambda)$ in $I$, where $\Lambda$ is a directed set, converging to $i$. Fix an open neighbourhood $J(i) \subseteq I$ such that $\ell(i) \dfn \sup_{j \in J (i)} \sqrt{c_{jj}} < \infty$. Consider a sequence $(f^n; \, n \in \bN)$ in $\rkh (c; \Fin)$ such that $\lim_{n \to \infty} \norm{f^n - f}_{c} = 0$. For $k \in \bN$, let $n_k \in \bN$ be large enough so that $\norm{f^{n_k} - f}_{c} \leq (4 k \ell(i))^{-1}$ holds. Then, pick $\mu_k \in \Lambda$ with the property that $i^\lambda \in J(i)$ and $|f^{n_k}_{i^\lambda} - f^{n_k}_i| \leq (2k)^{-1}$ holds whenever $\lambda \geq \mu_k$, and observe
\begin{align*}
|f_{i^\lambda} - f_i| &\leq |f_{i^\lambda} - f^{n_k}_{i^\lambda}| + |f^{n_k}_{i^\lambda} - f^{n_k}_i| + |f^{n_k}_i - f_i| \\
&\leq \pare{\sqrt{c_{i^\lambda i^\lambda}} + \sqrt{c_{i i}}} \norm{f^{n_k} - f}_{c} + |f^{n_k}_{i^\lambda} - f^{n_k}_i| \leq 1 / k
\end{align*}
 for all $\lambda \geq \mu_k$.    It follows that  $(f_{i^\lambda}; \, \lambda \in \Lambda)$ converges to $f_i$, completing the argument.
\end{proof}

\begin{remark} \label{rem:rkhs_continuous_dense}
In addition to the assumptions of Lemma \ref{lem:rkhs_continuous}, suppose that there exists a countable dense subset $Q$ of $I$. Using notation from \S   \ref{subsec:rkh_gen} and \S  \ref{subsec:rkh_rest_proj}, whenever $f \in \rkh(c)$ and $g \in \rkh(c; Q)$ are such that $\pi_{c;Q}(f) = g$, we have in this case $f = g$; this is  because both $f$ and $g$ are continuous, $f_Q = g_Q$, and $Q$ is dense in $I$. It follows then that $\rkh(c) = \rkh(c; Q)$, i.e., that $\rkh(c)$ is Hilbert-isomorphic to $\rkh(c_{QQ})$, and $\norm{f}_{c} = \norm{f_Q}_{c_{QQ}}$ holds for all $f \in \rkh(c)$.
\end{remark}

\bibliographystyle{amsalpha}
\bibliography{biblio}
\end{document}